\normalfont\fontsize{15}{15}\bfseries}{\thesection}{1em}{}
\newtheorem{theorem}{Theorem}[section]
\newtheorem{corollary}[theorem]{Corollary}
\newtheorem{lemma}[theorem]{Lemma}
\newtheorem{prop}[theorem]{Proposition}
\newtheorem{remark}[theorem]{Remark}
\newtheorem{defn}[theorem]{Definition}
\newtheorem{ex}[theorem]{Example}
\DeclareMathOperator{\ric}{Ric}
\DeclareMathOperator{\hess}{Hess}
\DeclareMathOperator{\Lie}{\mathcal{L}}
\DeclareMathOperator{\vol}{vol}
\DeclareMathOperator{\divergence}{div}
\DeclareMathOperator{\tr}{tr}
\DeclareMathOperator{\proj}{proj}
\newcommand{\bigslant}[2]{{\raisebox{.2em}{$#1$}\left/\raisebox{-.2em}{$#2$}\right.}}
\renewenvironment{proof}[1][\proofname]{\par
  \pushQED{\qed}%
  \normalfont \topsep6\p@\@plus6\p@\relax
  \trivlist
  \item[\hskip\labelsep
        \itshape
    #1\@addpunct{.}]\mbox{}\\*
}{%
 \popQED\endtrivlist\@endpefalse
}
\begin{document}
\title{\vspace{-2cm} Locally Homogeneous Non-gradient Quasi-Einstein 3-Manifolds}
\author{Alice Lim}
\address{215 Carnegie Building\\
Dept. of Math, Syracuse University\\
Syracuse, NY, 13244.}
\email{awlim100@syr.edu}
\urladdr{https://awlim100.expressions.syr.edu}

\maketitle

\begin{abstract} In this paper, we classify the compact locally homogeneous non-gradient $m$-quasi Einstein 3-manifolds. Along the way, we also prove that given a compact quotient of a Lie group of any dimension that is $m$-quasi Einstein, the potential vector field $X$ must be left invariant and Killing. We also classify the nontrivial $m$-quasi Einstein metrics that are a compact quotient of be the product of two Einstein metrics. We also show that $S^1$ is the only compact manifold of any dimension which admits a metric which is nontrivially $m$-quasi Einstein and Einstein. 
\end{abstract}

\section{Introduction}

Non-gradient $m$-quasi Einstein manifolds are of particular interest in the study of near-horizon geometries (See \cite{KhuriWoolgar}, \cite{KhWoWy}, and \cite{ExtremalBlackHoles}). In this paper, we study non-gradient $m$-quasi Einstein manifolds as a generalization of Einstein manifolds, gradient $m$-quasi Einstein manifolds, and Ricci solitons. In order to define the $m$-quasi Einstein equation, we must first give the definition of the $m$-Bakry \'Emery Ricci tensor:
\vspace{1em}

\begin{defn}\label{defn:bakryemeryricci}
Let $X$ be a vector field on a Riemannian manifold $(M^n,g)$. The $m$-Bakry-\' Emery tensor is $$\ric_X^m:=\ric+\frac{1}{2}\Lie_X g-\frac{1}{m}X^*\otimes X^*$$

where $\Lie_X g$ is the Lie derivative of $g$ with respect to $X$, and \begin{align}
X^*  : T_p M & \rightarrow \mathbb{R} \nonumber \\
 Y & \mapsto g(X,Y)\nonumber.
\end{align}
\end{defn}

If $X=\nabla \phi$ where $\phi:M\to\mathbb{R}$ is a smooth function, the $m$-Bakry-\' Emery Ricci tensor is  $$\ric_\phi^m:=\ric+\hess\phi-\frac{1}{m}d\phi\otimes d\phi,$$ and we call this the gradient $m$-Bakry \'Emery Ricci tensor. Notice that when $\phi$ is a constant, the gradient $m$-Bakry \'Emery Ricci tensor is the Ricci tensor. If $m=\infty$, the $m$-Bakry-\'Emery Ricci tensor becomes $\ric+\frac{1}{2}\mathcal{L}_Xg$.
\vspace{1em}

The $\infty$-Bakry \'Emery Ricci curvature was first studied by Lichnerowicz in 1971 in \cite{Lich}, and Qian first studied the gradient $m$-Bakry \'Emery Ricci curvature with $m\neq \infty$ in \cite{ZhongminQian}. Bakry and \'Emery further studied the Bakry \'Emery Ricci curvature in relation to diffusion processes in \cite{BE}. They also arise in the study of optimal transport, Ricci flow, and general relativity. In \cite{Lott}, Lott gives topological consequences and relations to the measured Gromov-Hausdorff limits to lower bounds on the Bakry-\'Emery Ricci curvature. Wei-Wylie prove Bakry-\'Emery Ricci curvature analogs of the comparison theorems and the volume comparison theorem in \cite{WeiWylie}. There have been many more papers written about the subject, too many to summarize here. Now, we are ready to define the $m$-quasi Einstein equation.

\begin{defn}
A manifold $(M,g)$ satisfies the $m$-quasi Einstein equation if $\ric_X^m=Ag$ for some constants $A$. 
\end{defn}

\begin{remark}
Many authors only consider the gradient case and/or the manifolds with boundary case of the $m$-quasi Einstein equation. We will assume neither condition in this paper.
\end{remark}

The $m=\infty$ case of the $m$-quasi Einstein equation corresponds to the Ricci soliton equation, $\ric+\frac{1}{2}\mathcal{L}_Xg=Ag$. Ivey showed in \cite{Ivey} that compact Ricci solitons must be shrinking, i.e. $A$ must be positive. Perelman showed in \cite{Perelman} that compact shrinking Ricci solitons must be gradient. Then Petersen-Wylie showed in \cite{Petersen-Wylie} that any compact locally homogeneous gradient Ricci soliton is Einstein. Therefore, by Ivey, Perelman, and Petersen-Wylie, here are no non-Einstein non-trivial locally homogeneous compact Ricci solitons. 
\vspace{1em}

If $(M,g)$ is $m$-quasi Einstein and if $X=\nabla \phi$, then we call the space gradient $m$-quasi Einstein. If $X=0$, then we call the space trivial. Our first result is the following theorem and gives us a classification of manifolds which are Einstein and $m$-quasi Einstein.
\vspace{1em}

\begin{theorem}
Let $M^n$ be a compact Einstein manifold. Then $M$ is non-trivial $m$-quasi Einstein for $m\neq \infty$ if and only if $M$ is $S^1$. 
\end{theorem}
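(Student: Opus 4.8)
The plan is to prove both implications. The converse is immediate: on $S^1$ with the flat metric $g = d\theta^2$, the field $X = \partial_\theta$ is Killing, so $\Lie_X g = 0$; since $\ric = 0$ and $X^* \otimes X^* = d\theta \otimes d\theta = g$, we get $\ric_X^m = -\tfrac1m g$, so $(S^1, g, \partial_\theta)$ is non-trivially $m$-quasi Einstein for every finite $m$ (with $A = -1/m$).

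For the forward direction, suppose $(M^n, g)$ is compact with $\ric = \lambda g$ and $\ric_X^m = Ag$ for some finite $m$ and some $X \not\equiv 0$. Put $\omega = X^*$ and $\mu = A - \lambda$, which is a constant. The $m$-quasi Einstein equation reduces to $\tfrac12 \Lie_X g - \tfrac1m \omega \otimes \omega = \mu g$, equivalently $\nabla_i \omega_j = \mu g_{ij} + \tfrac1m \omega_i \omega_j + \tfrac12 (d\omega)_{ij}$. Tracing gives $\divergence X = n\mu + \tfrac1m |X|^2$; integrating over $M$ yields $\int_M |X|^2 = -mn\mu \, \vol(M)$, so in particular $\mu \neq 0$ (otherwise $X \equiv 0$).

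The second ingredient is the divergence of the weighted field $|X|^2 X$. Using the structure equation for $\nabla \omega$ together with the formula for $\divergence X$, a short computation gives $\divergence(|X|^2 X) = (n+2)\mu |X|^2 + \tfrac3m |X|^4$; the $d\omega$-terms drop out, being antisymmetric tensors contracted against two copies of $X$. Integrating over $M$ and using $\int_M |X|^2 = -mn\mu\,\vol(M)$ gives $\int_M |X|^4 = \tfrac{n(n+2)m^2\mu^2}{3}\,\vol(M)$. Then Cauchy--Schwarz, $\left(\int_M |X|^2\right)^2 \le \vol(M) \int_M |X|^4$, becomes $n^2 m^2 \mu^2 \vol(M)^2 \le \tfrac{n(n+2)m^2\mu^2}{3}\vol(M)^2$, and dividing by $n m^2 \mu^2 \vol(M)^2 > 0$ forces $3n \le n + 2$, i.e. $n \le 1$. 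Since a connected compact $1$-manifold is $S^1$, this finishes the proof.

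The main point requiring care is the identity $\divergence(|X|^2 X) = (n+2)\mu|X|^2 + \tfrac3m|X|^4$, which is a direct but sign-sensitive computation from the structure equation for $\nabla\omega$; beyond this bookkeeping I anticipate no obstacle. Note that the argument never uses the sign of $m$ or of $\mu$, only that $m^2 > 0$ and $\mu \neq 0$, so the conclusion holds for every admissible finite $m$. (One could instead compute $\Delta|X|^2$ via the Weitzenb\"ock formula for $\omega$, together with an auxiliary identity for $\int_M|d\omega|^2$, but the weighted-divergence route is cleaner.)
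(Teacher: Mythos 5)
Your proof is correct, and it takes a genuinely different route from the paper. The paper reduces to the equation $\frac{1}{2}\Lie_X g-\frac{1}{m}X^*\otimes X^*=\mu g$ and then analyzes the Riccati-type ODE $\varphi'=\frac{1}{m}\varphi^2+\mu$ satisfied by $\varphi(t)=g(X,\dot\gamma)$ along unit-speed geodesics (its Proposition on $\lambda m>0,=0,<0$), disposing of the case where $X$ has a zero via Grove--Shiohama critical point theory (so that $M$ would be diffeomorphic to $\mathbb{R}^n$, contradicting compactness) and of the case $|X|^2\equiv-\mu m$ via a trace-and-integrate argument that forces $n=1$. Your argument replaces all of this with two integral identities, $\int_M|X|^2=-mn\mu\,\vol(M)$ and $\int_M|X|^4=\tfrac{n(n+2)m^2\mu^2}{3}\vol(M)$ (I checked the divergence computation for $|X|^2X$: the $d\omega$ contributions do vanish by antisymmetry, and the coefficients $(n+2)\mu$ and $3/m$ are right), followed by Cauchy--Schwarz, which yields $2n^2\le 2n$ and hence $n\le 1$. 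This is shorter, entirely self-contained, and avoids both geodesic completeness and the topological input from critical point theory; it also cleanly isolates that only $m^2>0$ and $\mu\neq 0$ matter. What the paper's ODE approach buys in exchange is reusable local information --- the explicit $\tanh$/constant form of $\varphi$ along geodesics is the engine behind its later results on $S^j\times\mathbb{R}$, products of Einstein manifolds, and $H^3$, whereas your integral identities are specific to the closed Einstein setting. One cosmetic point: state explicitly that $M$ is connected (or note that the argument applies componentwise), since ``compact $1$-manifold is $S^1$'' needs connectedness.
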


Gradient $m$-quasi Einstein metrics with $m>0$ where first systematically considered by Case-Shu-Wei in \cite{CaseShuWei} and Kim-Kim in \cite{Kim-Kim}. They show that gradient $m$-quasi Einstein metrics correspond to warped product Einstein metrics. In \cite[Theorem~2.1]{CaseShuWei}, Case-Shu-Wei prove that a compact gradient $m$-quasi Einstein with constant curvature must be trivial if $m>0$. Since locally homogeneous manifolds have constant scalar curvature, this shows that compact locally homogeneous manifolds which satisfy $\ric_\phi^m=Ag$ with $m>0$ must be trivial. The $m<0$ case follows from \cite[Theorem~1.9]{Petersen-Wylie-arxiv}. In \cite[Theorem~1.3]{HePetersenWylie_Constant_Scalar_Curvature}, He-Petersen-Wylie prove that if $(M^3,g)$ has no boundary, satisfies $\ric_\phi^m=Ag$ with $m>1$, and has constant scalar curvature, then $M^3$ is a quotient of $S^3$, $S^2\times \mathbb{R}$, $\mathbb{R}^3$, $H^2\times \mathbb{R}$, or $H^3$ with the standard metric. In \cite[Theorem~1.4]{HePetersenWylie_On_Homogeneous_Spaces_and_Homogeneous_Ricci_Solitons}, He-Petersen-Wylie show that if $(M^n,g)$ is a non-compact Ricci soliton with $m>0$ and $A<0$, under certain conditions, $M$ admits a non-trivial homogeneous gradient $m$-quasi Einstein ($\ric_\phi^m=Ag$) one-dimensional extension. In \cite[Theorem~1.1]{Lafuente}, Lafuente proves a converse to this result. 
\vspace{1em}


On the other hand, Chen-Liang-Zhu construct some examples of non-gradient $m$-quasi Einstein manifolds in \cite{ChenLiangZhu}. In \cite[Corollary~4.1,4.2]{ExtremalBlackHoles}, Kunduri-Lucietti study the non-gradient $m$-quasi Einstein metrics with $m=2$ in the context of vacuum, homogeneous near-horizon geometries, which gives us motivation to study non-gradient $m$-quasi Einstein metrics.
\vspace{1em}


Our main theorems give us a characterization of Lie groups which have a discrete group of isometries acting cocompactly and which satisfy $\ric_X^m=Ag$.

\begin{theorem}\label{theorem:left invariant and killing}
Let $G$ be a Lie group and let $\Gamma$ be a discrete group of isometries which acts cocompactly on $G$. Let $X$ be a vector field which is invariant under $\Gamma$. If $(G,g,X)$ satisfies $\frac{1}{2}\mathcal{L}_Xg-\frac{1}{m}X^*\otimes X^*=q$, where $q$ and $g$ are left invariant, then $X$ is left invariant. If we also assume that $\tr(q\circ ad_X)=0$, then $X$ is a Killing vector field.
\end{theorem}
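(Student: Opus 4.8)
The plan is to first descend to a compact manifold. Since $\Gamma$ acts on $G$ by isometries and $X$ is $\Gamma$-invariant, the metric $g$ and the field $X$ pass to the compact quotient $M:=\Gamma\backslash G$ (after, if necessary, replacing $\Gamma$ by its finite-index subgroup of left translations, which does not affect left-invariance of the conclusion), on which the identity $\tfrac12\Lie_X g-\tfrac1m X^*\otimes X^*=q$ still holds with $q$ and $g$ left-invariant. I will use two preliminary facts: a Lie group admitting a cocompact lattice is unimodular, so in a left-invariant orthonormal frame $\{E_i\}$ (which descends to $M$) one has $\divergence E_i=-\tr(\operatorname{ad}_{E_i})=0$ and $\sum_i\nabla_{E_i}E_i=0$; and, writing $X=\sum_i f_iE_i$ with $f_i\in C^\infty(M)$, the Koszul computation $(\Lie_{E_k}g)(E_i,E_j)=-\langle(\operatorname{ad}_{E_k}+\operatorname{ad}_{E_k}^{\top})E_i,E_j\rangle$, which describes $\Lie_X g$ in this frame.

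For the first assertion it suffices to show that $\Lie_X g$ is left-invariant: since $q$ is left-invariant this forces $X^*\otimes X^*=\tfrac m2\Lie_X g-mq$ to be left-invariant, i.e. every product $f_if_j$ is constant, and then for each $i$ either $f_i\equiv 0$ or $f_i^2$ is a positive constant, so by connectedness of $M$ each $f_i$ is constant. To prove $\Lie_X g$ is left-invariant I would extract integral identities on $M$. Tracing the equation gives $\divergence X=\tfrac1m\abs{X}^2+\tr q$, hence $\int_M\abs{X}^2=-m\,\vol(M)\,\tr q$; contracting against $f_if_j$ gives the pointwise relation $X(\abs{X}^2)=\tfrac2m\abs{X}^4+2q(X,X)$ (the cubic $\operatorname{ad}$-term dropping out because $[X,X]=0$), and contracting against $q_{ij}$, together with a Bochner-type identity for $\abs{X}^2$ obtained from $\nabla X=\tfrac12\Lie_X g+\tfrac12\,dX^*$ (which produces a $\divergence(\nabla_X X)$ term that integrates to zero), yields relations tying together $\int_M\abs{X}^2$, $\int_M\abs{X}^4$, $\int_M q(X,X)$ and $\int_M\tr(q\circ\operatorname{ad}_X)$. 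Feeding these into the Cauchy--Schwarz inequality $\int_M\abs{X}^4\ge\vol(M)^{-1}\bigl(\int_M\abs{X}^2\bigr)^2$ should force saturation, so $\abs{X}^2$ is constant (and then $\divergence X=0$); re-running the contractions with $\abs{X}^2$ now known constant should pin down the direction of $X$ and give left-invariance of $X^*\otimes X^*$. This last step --- squeezing the chain of integral identities so that Cauchy--Schwarz is forced to be an equality, and then bootstrapping from ``constant norm'' to ``constant in the left-invariant frame'' --- is the step I expect to be the main obstacle, since the equation only controls the symmetric part of $\nabla X$ and says nothing directly about the curl $dX^*$.

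Granting left-invariance, the Killing conclusion is short. Now $X=\sum_i c_iE_i$ with $c_i$ constant, so in the frame $\Lie_X g$ is the matrix $-(\operatorname{ad}_X+\operatorname{ad}_X^{\top})$ and $X^*\otimes X^*$ is the rank-one matrix $cc^{\top}$ (with $c=(c_1,\dots,c_n)^{\top}$), and the equation reads $-\tfrac12(\operatorname{ad}_X+\operatorname{ad}_X^{\top})-\tfrac1m cc^{\top}=q$. Composing with $\operatorname{ad}_X$ and taking traces, and using the hypothesis $\tr(q\circ\operatorname{ad}_X)=0$ together with $\tr(cc^{\top}\operatorname{ad}_X)=c^{\top}\operatorname{ad}_X c=\langle X,[X,X]\rangle=0$, gives $\tr(\operatorname{ad}_X^2)=-\norm{\operatorname{ad}_X}^2$ (Frobenius norm). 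Writing $\operatorname{ad}_X=S+N$ with $S$ symmetric and $N$ skew, one has $\tr(\operatorname{ad}_X^2)=\norm{S}^2-\norm{N}^2$ while $\norm{\operatorname{ad}_X}^2=\norm{S}^2+\norm{N}^2$, so the identity forces $\norm{S}^2=0$; hence $\operatorname{ad}_X$ is skew-symmetric and $\Lie_X g=-(\operatorname{ad}_X+\operatorname{ad}_X^{\top})=0$, i.e. $X$ is Killing. (Consistently, once $X$ is Killing the equation gives $X^*\otimes X^*=-mq$, which is left-invariant, re-confirming the first assertion.)
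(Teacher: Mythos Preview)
Your argument for the second assertion (Killing) is essentially the paper's: once $X$ is left-invariant, compose the matrix identity $q=-\tfrac12(\operatorname{ad}_X+\operatorname{ad}_X^{\top})-\tfrac1m X^*\otimes X^*$ with $\operatorname{ad}_X$, take traces, use $\tr(q\circ\operatorname{ad}_X)=0$ and $\operatorname{ad}_X X=0$, and conclude that the symmetric part of $\operatorname{ad}_X$ vanishes. That part is fine.

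The gap is in the first assertion. Your plan is to prove that $\Lie_X g$ (equivalently $X^*\otimes X^*$) is left-invariant by assembling integral identities --- the traced equation, the contraction $X(\abs{X}^2)=\tfrac2m\abs{X}^4+2q(X,X)$, and a Bochner-type formula --- and then forcing equality in Cauchy--Schwarz for $\int\abs{X}^4$. But if you actually carry this out, integrating $X(\abs{X}^2)$ by parts against $\divergence X=\tfrac1m\abs{X}^2+\tr q$ gives
\[
\int_M\abs{X}^4=\frac{m^2}{3}\vol(M)(\tr q)^2-\frac{2m}{3}\int_M q(X,X),
\]
whereas Cauchy--Schwarz compares with $\vol(M)^{-1}\bigl(\int\abs{X}^2\bigr)^2=m^2\vol(M)(\tr q)^2$. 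The discrepancy is governed by $\int_M q(X,X)$, which depends on the unknown direction of $X$ and is not pinned down by any of your identities; there is no visible mechanism to force equality. You acknowledge this yourself (``the step I expect to be the main obstacle''), and the subsequent bootstrap from ``$\abs{X}$ constant'' to ``each $f_i$ constant'' is likewise left open. So the first half of the proposal is not a proof.

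The paper avoids all of this with a pointwise ODE argument. Using unimodularity one works in a left-invariant orthonormal frame and computes, for each fixed $i$,
\[
q(X_i,X_i)=\tfrac12\Lie_X g(X_i,X_i)-\tfrac1m f_i^2 = X_i f_i-\tfrac1m f_i^2,
\]
the cross terms $\sum_k f_k\,g([X_i,X_k],X_i)$ dropping out. Since $q(X_i,X_i)=\lambda_i$ is constant, along each integral curve of $X_i$ the function $f_i$ solves $f_i'-\tfrac1m f_i^2=\lambda_i$. This Riccati equation has only the solutions $0$, $\pm\sqrt{-\lambda_i m}$, or $\sqrt{-\lambda_i m}\tanh\bigl(\tfrac{\sqrt{-\lambda_i m}}{m}(t+C)\bigr)$ defined for all $t$, and a short compactness argument on the closure of the projected integral curve rules out the $\tanh$ branch. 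Hence each $f_i$ is constant. This is the missing idea: rather than integrating globally, evaluate the equation on the diagonal in a left-invariant frame to decouple it into scalar ODEs, one per coordinate function.
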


Theorem \ref{theorem:left invariant and killing} was proven by Chen-Liang-Zhu in \cite[Theorem~1.1]{ChenLiangZhu} in the case when $G$ is a compact Lie group and $q=\ric$. Our next theorem gives us a characterization of the product of Einstein manifolds of any dimension which satisfy the $m$-quasi Einstein equation.

\begin{theorem}\label{theorem:compact einstein cross einstein} Consider the compact quotient of $M\times N$ with the product metric, where $M$ and $N$ are simply-connected complete Einstein manifolds. Then the only nontrivial solutions to $\ric_X^m=Ag$ occurs when either $M$ is $\mathbb{R}$ or $N$ is $\mathbb{R}$. 
\end{theorem}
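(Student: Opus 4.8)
The plan is to pass to the universal cover $M\times N$ and to decompose both the potential field and the $m$-quasi Einstein equation along the product splitting $T(M\times N)=TM\oplus TN$ (I work with finite $m\neq0$, as elsewhere in the paper).

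\textbf{Setup.} Write $Q=(M\times N)/\Gamma$; since $M$ and $N$ are simply connected, $M\times N$ is the universal cover of $Q$ and $\Gamma=\pi_1(Q)$ acts freely, properly discontinuously and cocompactly by isometries. Write $\ric_M=\lambda g_M$ and $\ric_N=\mu g_N$; then the Ricci tensor of the product is block-diagonal, $\ric=\lambda g_M\oplus\mu g_N$, and the Levi-Civita connection respects the splitting (all mixed Christoffel symbols vanish). Lift $X$ to a $\Gamma$-invariant field $\widetilde X=Y+Z$ with $Y$ tangent to the $M$-direction and $Z$ tangent to the $N$-direction. By uniqueness of the de Rham decomposition the distributions $TM$ and $TN$ are $\Gamma$-invariant (if $M$ and $N$ happen to be isometric one first replaces $\Gamma$ by the subgroup of index at most $2$ that preserves the labelling of the factors, which leaves the quotient compact), so $Y$ and $Z$ descend to vector fields on the compact manifold $Q$.

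\textbf{The two integral identities.} Writing $\ric_X^m=Ag$ in the $TM\otimes TM$ block and using that the connection splits, on every slice $M\times\{q\}$ one obtains $\ric_M+\tfrac12\Lie_Y g_M-\tfrac1m Y^*\otimes Y^*=Ag_M$; in particular each slice is itself $m$-quasi Einstein with the same constant $A$, and symmetrically for the $N$-slices. First I would take the $g_M$-trace of this slice identity, getting
\[
a\lambda+\divergence_M Y-\tfrac1m|Y|^2=aA,\qquad a:=\dim M,
\]
where $\divergence_M$ is the divergence along the $M$-slices. Because $Y\perp TN$ and the mixed connection terms vanish, $\divergence_M Y$ coincides with the divergence of $Y$ computed on $Q$, so $\int_Q\divergence_M Y=0$ and hence $\int_Q|Y|^2\,d\vol=ma(\lambda-A)\vol(Q)$. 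Next I would contract the slice identity with $Y\otimes Y$, which yields the pointwise relation $\tfrac12 Y(|Y|^2)=(A-\lambda)|Y|^2+\tfrac1m|Y|^4$; integrating over $Q$, rewriting $\int_Q Y(|Y|^2)=-\int_Q|Y|^2\,\divergence_M Y$ and substituting $\divergence_M Y=a(A-\lambda)+\tfrac1m|Y|^2$ from the trace identity, I obtain
\[
\Bigl(1+\tfrac a2\Bigr)(A-\lambda)\int_Q|Y|^2+\frac{3}{2m}\int_Q|Y|^4=0 .
\]
Assume $Y\not\equiv0$, so $\int_Q|Y|^2>0$ and therefore $m(\lambda-A)>0$. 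Eliminating $(A-\lambda)\int_Q|Y|^2$ between the two identities gives $\int_Q|Y|^4=\tfrac23 m^2a\bigl(1+\tfrac a2\bigr)(\lambda-A)^2\vol(Q)$, while Cauchy--Schwarz together with the volume identity gives $\int_Q|Y|^4\ge\bigl(\int_Q|Y|^2\bigr)^2/\vol(Q)=m^2a^2(\lambda-A)^2\vol(Q)$. Cancelling the positive factor $m^2a(\lambda-A)^2\vol(Q)$ leaves $\tfrac23\bigl(1+\tfrac a2\bigr)\ge a$, i.e. $a\le1$, so $M=\mathbb R$. Running the identical argument with $M$ and $N$ interchanged shows that $Z\not\equiv0$ forces $N=\mathbb R$. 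Since a nontrivial solution has $X\neq0$, at least one of $Y,Z$ is not identically zero, and the theorem follows.

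\textbf{Where the difficulty lies.} The computation is short and self-contained; the delicate point is the reduction in the Setup — establishing that the product splitting of the universal cover is genuinely $\Gamma$-invariant, so that $Y$ and $Z$ are honest vector fields on the compact manifold $Q$ over which the integrations by parts are legitimate. This is exactly where uniqueness of the de Rham decomposition and the structure of the isometry group of a Riemannian product are used, together with the hypothesis that $Q$ is a bona fide compact quotient rather than a non-Hausdorff orbit space. A secondary check is that the $TM$-block of $\tfrac12\Lie_{\widetilde X}g$ really restricts, slice by slice, to $\tfrac12\Lie_Y g_M$, which again comes down to the vanishing of the mixed Christoffel symbols of a product metric.
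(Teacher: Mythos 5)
Your two integral identities are correct, and they give a genuinely different proof from the paper's. The paper restricts the equation to unit-speed geodesics lying in $M$-slices chosen perpendicular to $X$ at a point, solves the resulting Riccati equation $\varphi'=\tfrac1m\varphi^2+(A-\rho_M)$, and kills the $\tanh$ branch by a closure/maximum argument in the compact quotient (Lemma \ref{lemma:either positive or one-dimensional}); you instead trace and contract the $TM$-block, integrate over $Q$, and finish with Cauchy--Schwarz. I verified the chain: $\int_Q|Y|^2=ma(\lambda-A)\vol(Q)$, the identity $\bigl(1+\tfrac a2\bigr)(A-\lambda)\int_Q|Y|^2+\tfrac{3}{2m}\int_Q|Y|^4=0$, and the comparison with $\int_Q|Y|^4\ge\bigl(\int_Q|Y|^2\bigr)^2/\vol(Q)$ indeed force $\tfrac23\bigl(1+\tfrac a2\bigr)\ge a$, i.e.\ $a\le 1$, whenever $Y\not\equiv 0$. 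A nice by-product is that the same argument applied to the full field $X$ on all of $Q$ shows a compact quotient of an Einstein manifold of dimension at least two carries no nontrivial solution, which recovers results the paper obtains separately.

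The genuine gap is exactly the point you flag and then dismiss too quickly: $\Gamma$-invariance of the splitting $TM\oplus TN$. Uniqueness of the de Rham decomposition does not give this, because $M$ and $N$ are only assumed Einstein, not irreducible, so $M\times N$ is in general a regrouping of de Rham factors that isometries need not respect. Two failure modes: (i) if $M$ or $N$ is reducible and shares an isometric irreducible factor with the other, elements of $\Gamma$ can permute factors across the splitting (your index-two trick only covers the wholesale swap $M\cong N$); (ii) more seriously, if both $M$ and $N$ contain Euclidean de Rham factors (a Euclidean factor forces the Einstein constant to vanish, so both are then Ricci-flat; e.g.\ $M=N=\mathbb{R}^2$, or $M=\mathbb{R}^2$ and $N=\mathbb{R}\times K$ with $K$ irreducible Ricci-flat), then the Euclidean factor of $M\times N$ is $\mathbb{R}^{k_1+k_2}$ and deck transformations may rotate it so as to mix the $M$- and $N$-directions --- a flat $4$-manifold whose holonomy contains a quarter-turn in a plane spanned by one $M$-direction and one $N$-direction is a concrete example. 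In such cases $Y$, $Z$, $|Y|^2$ and $\divergence Y$ are not $\Gamma$-invariant, so the integrals over $Q$ at the heart of your proof are undefined. The gap is repairable: case (i) is handled by passing to the finite-index kernel of the homomorphism from $\Gamma$ to the finite permutation group of isometric de Rham factors (still cocompact); and in case (ii) the whole product is Ricci-flat, so your own computation applied to the full field $X$ on $Q$ (with $\lambda=0$ and $a$ replaced by $\dim M+\dim N\ge 2$) gives $X\equiv 0$ with no splitting needed. Note that the paper's argument avoids the issue altogether, since it only uses the scalar $g(X,\dot\gamma)$ along slice geodesics upstairs together with compactness of $Q$, never the descent of the components of $X$.
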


We apply the results above to classify the $m$-quasi Einstein solutions for locally homogeneous 3-manifolds which admit compact quotient. 

\vspace{1em}

\begin{theorem}\label{thm: main theorem}
Let $M^3$ be a compact locally homogeneous Riemannian manifold with $\ric_X^m=Ag$. 

\begin{enumerate}
    \item If $m>0$ and $A>0$, then there exist $m$-quasi Einstein solutions if and only if $M^3$ is a compact quotient of $SU(2)$.
    \item If $m>0$ and $A=0$, then there exist solutions if and only if $M^3$ is a compact quotient of $SU(2)$ or $\mathbb{R}^3$, where the solution on $\mathbb{R}^3$ is $X=0$.
    \item If $m>0$ and $A<0$, then there exist solutions if and only if $M^3$ is a compact quotient of $SU(2)$, $Nil$, or $H^2\times \mathbb{R}$. 
    \item If $m<0$ and $A>0$, then there exist solutions if and only if $M^3$ is a compact quotient of $SU(2)$ or $S^2\times \mathbb{R}$. 
    \item If $m<0$ and $A=0$, then there exist solutions if and only if $M^3$ is a compact quotient of $\mathbb{R}^3$ or $\widetilde{SL_2(\mathbb{R})}$, where the solution on $\mathbb{R}^3$ is trivial.
    \item If $m<0$ and $A<0$, there are no $m$-quasi Einstein solutions on $M^3$ .
\end{enumerate}
\end{theorem}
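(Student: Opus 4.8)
The plan is to reduce the classification to a case-by-case analysis over the eight Thurston geometries that admit compact quotients, using the structural results established earlier. The first step is to invoke the standard fact that a simply-connected locally homogeneous 3-manifold covering a compact quotient is one of $S^3$, $\mathbb{R}^3$, $H^3$, $S^2 \times \mathbb{R}$, $H^2 \times \mathbb{R}$, $\widetilde{SL_2(\mathbb{R})}$, $Nil$, or $Sol$, and each such model is either a Lie group with left-invariant metric or (for $S^2 \times \mathbb{R}$) a product of Einstein pieces. For the genuine Lie-group cases, Theorem \ref{theorem:left invariant and killing} applies (with $q = \ric_X^m - \ric + \frac{1}{m}X^*\otimes X^* = Ag - \ric$, which is left invariant since the metric is homogeneous, and one checks $\tr(q \circ ad_X) = 0$ using unimodularity or a direct trace computation): the potential field $X$ must be left invariant and Killing. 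This collapses the $m$-quasi Einstein equation to the purely algebraic condition $\ric - \frac{1}{m}X^* \otimes X^* = Ag$ with $X$ a left-invariant Killing field, since $\Lie_X g = 0$.

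The second step is to solve this algebraic equation on each of the six three-dimensional unimodular Lie groups (and $Sol$ separately, which is unimodular but has a nontrivial analysis), together with the product case $S^2 \times \mathbb{R}$ handled via Theorem \ref{theorem:compact einstein cross einstein}, and the constant-curvature non-group pieces $H^3$ via the cited He-Petersen-Wylie and Case-Shu-Wei results and Theorem \ref{theorem:left invariant and killing}. Using the Milnor-frame description of left-invariant metrics on unimodular 3-dimensional Lie groups, I would write $\ric$ in diagonalized form with the structure constants $\lambda_1, \lambda_2, \lambda_3$, write $X = \sum c_i e_i$, and the equation $\ric_{ij} - \frac{1}{m}c_i c_j = A\delta_{ij}$ becomes: off-diagonal entries force at most one $c_i \neq 0$, so $X$ points along a single Milnor axis, and then three scalar equations relate $A$, $m$, $|X|^2$, and the $\lambda_i$. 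Separately, for $X$ to be Killing on the Lie group (Killing with respect to a left-invariant metric) one needs $X$ to generate isometries; combined with left-invariance this further constrains which axis $X$ may lie along. Running through $SU(2)$ ($\lambda_i$ all same sign), $\widetilde{SL_2(\mathbb{R})}$ (one sign differs), $Nil$ (one $\lambda_i = 0$, the other two forced by unimodularity), $\mathbb{R}^3$ (all zero), $E(2)$ and $E(1,1)$, and tracking the sign of $A$ and $m$ that each admits, produces the six-way table in the statement. Finally, $H^2 \times \mathbb{R}$ and $S^2 \times \mathbb{R}$ are dispatched by Theorem \ref{theorem:compact einstein cross einstein}: the $\mathbb{R}$-factor is the only way to get a nontrivial solution on a product, and one records the sign of $A$ that results from the curvature of the $H^2$ or $S^2$ factor and the sign of $m$.

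I expect the main obstacle to be the bookkeeping in the Milnor-frame case analysis: for each of $SU(2)$, $\widetilde{SL_2(\mathbb{R})}$, $E(2)$, $E(1,1)$, $Nil$ one must verify that the Killing condition on $X$ is actually satisfiable (equivalently, that the axis $X$ lies along corresponds to a one-parameter subgroup of the isometry group, which for a non-bi-invariant left-invariant metric is a genuine restriction — often only the axes along which the metric has an extra rotational symmetry work), and then solve the resulting system for the admissible signs of $A$. The delicate points are: (i) showing $SU(2)$ admits solutions for every sign of $A$ and both signs of $m$ (one must exhibit or rule out the right Berger-sphere-type metric together with the right Killing axis in each regime); (ii) showing $Sol$, $E(2)$, $E(1,1)$, and $H^3$ admit \emph{no} nontrivial solutions, so they appear nowhere in the table except where forced to be trivial; and (iii) confirming that on $\mathbb{R}^3$ and in the $A=0$ cases the only solutions are the trivial/constant ones, which uses the constant-scalar-curvature rigidity results cited in the introduction together with the fact that a left-invariant Killing field of constant length on flat $\mathbb{R}^3$ contributes nothing to $\ric_X^m$ beyond the $-\frac{1}{m}X^*\otimes X^*$ term, which cannot equal $Ag$ unless $X = 0$.
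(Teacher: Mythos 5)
Your overall strategy coincides with the paper's: reduce to the simply-connected homogeneous models admitting compact quotients, use Theorem \ref{theorem:left invariant and killing} to force $X$ to be left invariant and Killing on the unimodular Lie groups, run the Milnor-frame computation (including the observation that the off-diagonal terms $-\frac{1}{m}a_ia_j$ force $X$ onto a single axis, which is the paper's Lemma \ref{lemma: aiaj=0}), and treat the product geometries via Theorem \ref{theorem:compact einstein cross einstein}. Routing $H^2\times\mathbb{R}$ through Theorem \ref{theorem:compact einstein cross einstein} rather than through the Lie group structure of $H^2$ (as the paper does) is a legitimate minor variation.

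There is, however, one concrete gap: your treatment of $H^3$. You propose to dispatch it "via the cited He-Petersen-Wylie and Case-Shu-Wei results and Theorem \ref{theorem:left invariant and killing}," but neither tool applies. The He-Petersen-Wylie and Case-Shu-Wei theorems concern the \emph{gradient} equation $\ric_\phi^m=Ag$, and the whole point of this classification is that $X$ need not be gradient. Theorem \ref{theorem:left invariant and killing} also fails here: although $H^3$ is isometric to a solvable Lie group with left-invariant metric, that group is not unimodular, so it admits no cocompact lattice of left translations; the deck group of a compact hyperbolic manifold sits in the full isometry group, not in the simply transitive solvable group, and the unimodularity used in the proof of Lemma \ref{lemma:chen_without_ricci} (to choose a frame with $g(ad_X(X_i),X_i)=0$) is unavailable. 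The paper instead handles $H^3$ by a separate analysis of the ODE $\varphi'=\frac{1}{m}\varphi^2+\lambda$ for $\varphi(t)=g(X,\dot\gamma(t))$ along unit-speed geodesics (Proposition \ref{prop:lambda m >0 =0 <0}), combined with a Grove--Shiohama critical-point argument (Proposition \ref{prop: lambda_m<0 global condition}) showing that a compact manifold carrying a nonzero solution of $\frac{1}{2}\mathcal{L}_Xg-\frac{1}{m}X^*\otimes X^*=\lambda g$ with $\lambda m<0$ must be $S^1$; applied with $\lambda=A+\rho$ this forces $X=0$ and $A=-\rho<0$ on compact hyperbolic quotients. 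You would need to supply this (or an equivalent) argument; the same machinery is also what the paper uses to rule out the nonconstant $\tanh$-type solutions on the $\mathbb{R}$-factor in the product cases, a point your sketch leaves implicit.
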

\vspace{1em}

\begin{remark}
In a related result, Buttsworth studied the prescribed Ricci tensor problem on these spaces in \cite{Buttsworth}. This result when $m=2$ was also proven by Kunduri-Lucietti in \cite{ExtremalBlackHoles}.
\end{remark}


If $M^n$ is a homogeneous Einstein manifold, where $\ric=Ag$, then if $A>0$, then $M$ is compact by Myers' Theorem, if $A=0$, then $M$ is flat by Alekseevskii-Kimel'fel'd in \cite{Alekseevskii-Kimelfeld}, and if $A<0$, then $M$ is not compact by Bochner's Theorem, which can be found in Section \ref{section: Splitting theorem and myers theorem}. If we compare this to Theorem \ref{thm: main theorem}, we see that this structure does not hold for $m$-quasi Einstein metrics. When $A=0$, there exist solutions on (compact quotients of) $SU(2)$, which are not flat. Similarly, in the $A<0$ case, there exist solutions on compact quotients of $SU(2)$. 
\vspace{1em}

In \cite[Lemma~4.4]{Wylie_Warped_Product_Splitting}, we see that if $M^n$ is a compact manifold with infinite fundamental group satisfying $\ric_\phi^m=Ag$ where $A=0$, with $m=1-n<0$, then the universal cover has a warped product splitting. By Theorem \ref{thm: main theorem}, there exist solutions for the compact quotient of $\widetilde{SL_2(\mathbb{R})}$ if $M^n$ satisfies $\ric_X^m=Ag$ when $m<0$ and $A=0$. This is interesting because $\widetilde{SL_2(\mathbb{R})}$ clearly does not split.
\vspace{1em}

We organize the paper in the following way. In Section \ref{section: locally homogeneous 3-manifolds}, we give a characterization, due to Singer, of locally homogeneous 3-manifolds. We then explain our approach for the rest of the paper to compute solutions to the $m$-quasi Einstein equation. 
\vspace{1em}

In Section \ref{section: Unimodular Lie Groups}, we introduce theory which simplifies the $m$-quasi Einstein equation when $M^n$ is  a unimodular Lie group, and we compute the solutions in Section \ref{section: lie groups}.   In Section \ref{section: Splitting theorem and myers theorem}, we discuss using the $\ric_X^m$ version of Myers' Theorem and the Splitting Theorem in order to study the case when $m>0$, $A\geq 0$ as in Theorem \ref{thm: main theorem}.
\vspace{1em}

In Section \ref{section: analyze new equation}, we analyze the equation $\frac{1}{2}\mathcal{L}_Xg-\frac{1}{m}X^*\otimes X^*=\lambda g$ in order to classify the $m$-quasi Einstein equations of the locally homogeneous 3-manifolds that admit compact quotient which are not Lie groups. We also classify the nontrivial m-quasi Einstein metrics that can be the product of two Einstein metrics in Section 6. Then, we finish our classification and we also show that there are no solutions to $\ric_X^m=Ag$ on compact hyperbolic manifolds of any dimension. In Section \ref{section:table}, we give a table which summarizes our results. 

\section{Unimodular Lie Groups}\label{section: Unimodular Lie Groups}

In \cite[Theorem~1.1]{ChenLiangZhu}, Chen-Liang-Zhu proved that if $M$ is a compact Lie group with a left-invariant metric $g$, and if $X$ is a vector field on $M$ such that $\ric_X^m=Ag$ for $m\neq 0$, then $X$ is a left-invariant. Furthermore, $X$ is a Killing vector field \cite[Theorem~2.3]{ChenLiangZhu}.
\vspace{1em}

Chen-Liang-Zhu prove \cite[Theorem~1.1]{ChenLiangZhu} by first proving that $X$ is left-invariant, and then proving that $X$ is Killing using properties of the Ricci tensor. We will consider $\frac{1}{2}\mathcal{L}_Xg-\frac{1}{m}X^*\otimes X^*=q$ where $q$ is a left-invariant tensor, which is more general than $\ric+\frac{1}{2}\mathcal{L}_Xg-\frac{1}{m}X^*\otimes X^*=Ag$. Rather than considering $G$ a compact Lie group, we assume $G$ admits a discrete group of isometries, $\Gamma$, which acts cocompactly on $G$. Next, we give the definition for $ad_X$ in order to state a linear algebra fact to prove that $X$ is Killing given that $X$ is a left-invariant vector field which satisfies $\ric_X^m=Ag$.
\vspace{1em}

\begin{defn}
If $G$ is a Lie group and if $\mathfrak{g}$ is the Lie algebra of $G$, then we define $ad_X:\mathfrak{g}\rightarrow \mathfrak{g}$ by $ad_X(Y)=[X,Y]$, where $X,Y$ are vector fields in $\mathfrak{g}$. 
\end{defn}
\vspace{1em}

If $G$ is a Lie group which admits a discrete subgroup $\Gamma$ with compact quotient, then $G$ must be unimodular. It is a linear algebra fact that if $G$ is a unimodular Lie group, then there exists a basis $\{X_i\}_{i=1}^n$ of $\mathfrak{g}$, the Lie Algebra of $G$, such that $g(ad_X(X_i),X_i)=0$ for all $i$. We will use these facts about Lie groups to prove our main lemmas, which are generalizations of Chen-Liang-Zhu's \cite[Theorem~1.1]{ChenLiangZhu} and \cite[Theorem~2.3]{ChenLiangZhu}.

\begin{lemma}\label{lemma:chen_without_ricci}
Let $G$ be a connected Lie group and let $\Gamma$ be a discrete group of isometries which acts cocompactly on $G$. Let $X$ be a vector field which is invariant under $\Gamma$. If $(G,g,X)$ satisfies $\displaystyle\frac{1}{2}\mathcal{L}_Xg-\frac{1}{m}X^*\otimes X^*=q$, where $q$ and $g$ are left invariant, then $X$ is a left-invariant vector field. 
\end{lemma}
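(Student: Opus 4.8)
The plan is to adapt the argument of Chen--Liang--Zhu \cite[Theorem~1.1]{ChenLiangZhu} from a compact Lie group to the closed quotient $M:=G/\Gamma$. Fix a left-invariant orthonormal frame $\{X_i\}_{i=1}^{n}$ of $TG$ and write $X=\sum_i f_iX_i$; proving the Lemma amounts to showing that the $f_i$ are constant. Since $g$ and $X$ are $\Gamma$-invariant, so is the equation; hence $q$ is forced to be $\Gamma$-invariant, and $u:=\abs{X}^2=\sum_i f_i^2$, $\divergence X$, $q(X,X)$ and $\langle X,\divergence q\rangle$ all descend to smooth functions on $M$. Tracing $\tfrac12\mathcal{L}_Xg-\tfrac1m X^*\otimes X^*=q$ gives $\divergence X=c+\tfrac1m u$ with $c:=\tr_g q$ a constant; integrating over $M$ (the divergence theorem applies because $G$ is unimodular, so its Riemannian volume is bi-invariant and descends) forces $c=-\tfrac1{m\vol(M)}\int_M u$. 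Since $\int_M u\ge 0$ we get $mc\le 0$, and if $mc=0$ (i.e. $\tr_g q=0$) then $u\equiv 0$, so $X\equiv 0$ is already left-invariant; thus we may assume $mc<0$, i.e. $\int_M u>0$.

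The key step is a curvature-free Bochner identity. Contracting the equation with $X$ yields $\nabla_XX=2QX+\tfrac2m uX-\tfrac12\nabla u$, where $Q$ is the symmetric endomorphism with $g(QY,Z)=q(Y,Z)$. Feeding this into the elementary identity $\tfrac12\Delta u=\divergence(\nabla_XX)$, and simplifying with $\divergence X=c+\tfrac1m u$ and $q(X,X)=\tfrac12 X(u)-\tfrac1m u^2$ (the equation evaluated on $(X,X)$), the Ricci terms cancel and one is left with
$$\Delta u-\tfrac3m X(u)=\tfrac{2c}m\,u+2\langle X,\divergence q\rangle+2\abs{q}^2$$
on $M$, a drift-Laplacian equation whose coefficients are bounded --- indeed $\abs{q}^2$ and $\divergence q$ have constant components in the frame $\{X_i\}$ precisely because $q$ is left-invariant, which is where that hypothesis is used.

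The main obstacle is to extract from this that $u$ is constant: the scalar equation alone is not enough, and it must be supplemented by the full tensor equation --- for instance the pointwise bound $\abs{\nabla X}^2\ge\abs{\tfrac12\mathcal{L}_Xg}^2=\abs{q}^2+\tfrac2m q(X,X)+\tfrac1{m^2}u^2$ (equality iff $X^*$ is closed), or the equation on the remaining frame pairs $(X_i,X_j)$ --- together with the maximum principle applied at the maximum and at the minimum of $u$ on the closed manifold $M$. This is the delicate part of the argument, and the step where compactness of $M$ and unimodularity of $G$ are used essentially, following \cite{ChenLiangZhu}. Once $u$ is shown to be constant, $\divergence X=c+\tfrac1m u$ is a constant with zero integral, hence $\divergence X\equiv 0$; plugging this back into the relations above (which then read $\nabla_XX=2QX+\tfrac2m uX$, $\divergence(\nabla_XX)=0$, $q(X,X)=-\tfrac1m u^2$ constant) and into the frame form of the equation, a short further argument forces each $f_i$ to be constant, i.e. $X$ is left-invariant.
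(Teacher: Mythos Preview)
Your approach has real gaps, and it diverges sharply from the paper's argument.

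First, the ``elementary identity'' $\tfrac12\Delta u=\divergence(\nabla_XX)$ is false. With $A:=\nabla X$ (so $AY=\nabla_YX$) one has $\tfrac12\nabla u=A^{T}X$, whereas $\nabla_XX=AX$; these differ by $(A^{T}-A)X$, which vanishes only when $X^*$ is closed. So the displayed drift equation for $u$ is not derived, and in fact any correct Bochner-type identity for $u$ will involve either $\ric(X,X)$ or $\abs{\nabla X}^2$ --- terms you cannot control from the hypothesis alone. Second, you explicitly defer the crucial step (``show $u$ is constant'') to \cite{ChenLiangZhu}, but that paper does \emph{not} proceed via a scalar equation for $u$; so there is no argument to borrow here. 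Third, even granting $u\equiv\text{const}$, the passage to ``each $f_i$ constant'' is not a short further argument: constancy of $\sum f_i^2$ and of $\divergence X$ does not by itself pin down the individual $f_i$.

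The paper's proof is far more direct and avoids all of this. Using unimodularity one picks a left-invariant orthonormal frame $\{X_i\}$ in which the cross terms $\sum_k f_k\,g([X_i,X_k],X_i)$ vanish; then the $(X_i,X_i)$-component of the tensor equation reduces to the \emph{decoupled} scalar equation
\[
X_i f_i-\tfrac1m f_i^{2}=q(X_i,X_i)=:\lambda_i,
\]
a Riccati ODE for $f_i$ along each integral curve of $X_i$. Its global solutions on $\mathbb{R}$ are only $f_i\equiv\pm\sqrt{-m\lambda_i}$, $f_i\equiv 0$, or a $\tanh$ profile (Lemma~\ref{lemma:function}); compactness of $G/\Gamma$ then rules out the non-constant branch by a short closure-of-orbit argument. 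No trace, divergence theorem, Bochner identity, or maximum principle is needed --- the whole proof lives on a single integral curve at a time. I would recommend abandoning the integral approach and working frame-component by frame-component as above.
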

\vspace{1em}

\begin{proof}
Because $G$ is a Lie group which admits a discrete subgroup with compact quotient, $G$ must be unimodular. Let $M=\bigslant{G}{\Gamma}$ and let $\pi:G\rightarrow M$. By our discussion above, we can choose a basis, $\{X_i\}\in G$, such that $g(ad_{X}(X_i),X_i)=0$ for all $i$. Then let $X=\displaystyle\sum_{k=1}^n f_kX_k$, where $f_k:G\rightarrow \mathbb{R}$. Using the technique from \cite[Theorem~1.1]{ChenLiangZhu}, for all $i$, we get the following:

\begin{equation*}
\begin{aligned}
\displaystyle\frac{1}{2}\mathcal{L}_{X}g(X_i,X_i)-\frac{1}{m}X^*\otimes X^*(X_i,X_i)&=X_if_i+\sum_{k=1}^{n}f_k g(\nabla_{X_i}X_k,X_i )-\frac{1}{m}f_i^2\\
&=X_if_i+\sum_{k=1}^{n}f_k g([X_i,X_k],X_i )-\frac{1}{m}f_i^2\\
&=X_if_i+g(-ad_{X}(X_i),X_i )-\frac{1}{m}f_i^2\\
&=X_if_i-\displaystyle\frac{1}{m}f_i^2.
\end{aligned}
\end{equation*}
\vspace{1em}

Then, since $M$ is compact, there exists a maximum and a minimum of the function $f_i$ on $M$. Let $r$ be a point in $M$ such that $f_i(r)$ is maximal and let $s$ be a point in $M$ such that $f_i(s)$ is minimal and let $q(\pi(X_i),\pi(X_i))=\lambda_i$. Then
\begin{equation*}
\begin{aligned}
\lambda_i&=X_if_i(r)-\displaystyle\frac{1}{m}f_i^2(r)\\
&=-\displaystyle\frac{1}{m}f_i^2(r)\\
\end{aligned}
\end{equation*}
and 

\begin{equation*}
\begin{aligned}
\lambda_i&=X_if_i(s)-\displaystyle\frac{1}{m}f_i^2(s)\\
&=-\displaystyle\frac{1}{m}f_i^2(s)\\
\end{aligned}
\end{equation*}
\vspace{1em}

Then, $f_i^2(r)=f_i^2(s)=-m\lambda_i$. We will now rule out the case $f_i(r)=-f_i(s)$ in order to show that $f_i$ must be constant.
\vspace{1em}

Let $c(t)$ be an integral curve of $X_i$. Then along $\pi\circ c(t)$, $f_i'(t)-\frac{1}{m}f_i^2(t)=\lambda_i$. Solving this equation (see Lemma \ref{lemma:function}), we have that $f_i(t)=\sqrt{-\lambda_im}$, $-\sqrt{-\lambda_i m}$, $0$, or $-\sqrt{-\lambda_i m}\tanh(\frac{\sqrt{-\lambda_i m}}{m}(t+C))$.
\vspace{1em}

Assume for the sake of contradiction that $f_i(t)$ is not constant, ie $f_i(t)=-\sqrt{-\lambda_i m}\tanh(\frac{\sqrt{-\lambda_i m}}{m}(t+C))$, where $C$ is a constant. Let $\pi\circ c(t_i)$ be a sequence of points such that $t_i\rightarrow \infty$. Since $M$ is compact, there exists a subsequence of $\{\pi\circ c(t_i)\}$ which converges to a point on $M$.
\vspace{1em}

Now consider the set $\overline{\{\pi\circ c(t):t\in\mathbb{R}\}}$. Since this set is closed, $f_i$ has a maximal point, $t_{max}$ on this set. Because the supremum of the $\tanh$ function is $1$, we know that the maximum of $f_i(t)$ on $\overline{\{\pi\circ c(t):t\in\mathbb{R}\}}$ is $\sqrt{-\lambda_i m}$.
\vspace{1em}

Let $b(t)$ be an integral curve of $X_i$ such that $b(0)=c(t_{max})=\sqrt{-\lambda_i m}$. Now consider the set $\{\pi\circ b(t):t\in\mathbb{R}\}$. Along $b(t)$, $f_i(t)$ is either $\sqrt{-\lambda_i m}$ or $-\sqrt{-\lambda_i m}\tanh(\frac{\sqrt{-\lambda_i m}}{m}(t+C))$. Since the supremum of $f_i(t)$ on $\{\pi\circ b(t):t\in\mathbb{R}\}$ is $\sqrt{-\lambda_i m}$ and $\tanh$ never achieves its maximum on its domain, $f_i(t)$ must be constantly $\sqrt{-\lambda_i m}$ on the set $\{\pi\circ b(t):t\in \mathbb{R}\}$.
\vspace{1em}

Finally, since $\overline{\{\pi\circ b(t):t\in\mathbb{R}\}}=\overline{\{\pi\circ c(t):t\in\mathbb{R}\}}$, $f_i(t)$ is constant on $\overline{\{\pi\circ c(t):t\in\mathbb{R}\}}$. Then, since  $f_i(t)$ is constant along every integral curve and since $G$ is connected, $f_i(t)$ is constant.

\end{proof}

\begin{lemma}\label{lemma:function}
Let $f'(t)-\frac{1}{m}f^2(t)=\lambda$, where $f:\mathbb{R}\rightarrow\mathbb{R}$ is defined for all $t$ in $\mathbb{R}$ and $\lambda$ and $m$ are constants. Then:
\begin{enumerate}
    \item If $\lambda =0$, then $f(t)=0$.
    \item If $\lambda m>0$, then there are no solutions.
    \item If $\lambda m<0$, then $f(t)=\pm\sqrt{-\lambda m}$ or $\sqrt{-\lambda m}\tanh\bigg(\frac{\sqrt{-\lambda m}}{m}(t+C)\bigg)$.
\end{enumerate}

\end{lemma}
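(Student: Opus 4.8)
The plan is to treat Lemma~\ref{lemma:function} as a question in the elementary qualitative theory of the autonomous scalar ODE $f' = F(f)$ with $F(s) = \tfrac1m s^2 + \lambda$, the one hypothesis that does all the work being that $f$ is assumed defined on \emph{all} of $\mathbb{R}$. Since $F$ is a polynomial it is locally Lipschitz, so solutions through a given initial condition are unique; consequently a solution may never cross an equilibrium of $F$, and on any interval where $F(f)\neq 0$ the composition $F\circ f$ has constant sign, so $f$ is strictly monotone and in particular injective. The equilibria are the real roots of $F$, i.e.\ the solutions of $s^2=-\lambda m$: there are none when $\lambda m>0$, the single value $0$ when $\lambda=0$, and $\pm b$ with $b:=\sqrt{-\lambda m}$ when $\lambda m<0$. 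This already accounts for the constant solutions appearing in (1) and (3); what remains is to show that these, together with the $\tanh$ family, exhaust the globally defined solutions.

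For (1) with $\lambda=0$: if $f(t_0)=0$ for some $t_0$ then $f\equiv 0$ by uniqueness. Otherwise $f$ is nowhere zero, and dividing $f'=\tfrac1m f^2$ by $f^2$ and integrating gives $f(t)=-m/(t-t_0)$ for a constant $t_0$, which is not defined at $t=t_0$, contradicting global existence; hence $f\equiv 0$. For (2) with $\lambda m>0$: now $F$ has no real zeros, so $f$ is strictly monotone and $H\circ f$ has derivative $1$, where $H(x):=\int_{f(0)}^{x} ds/F(s)$; thus $H(f(t))=t$ for all $t$. But $\int_{\mathbb{R}} ds/F(s)$ is a finite number (an arctangent), so $H$ is bounded, and $t=H(f(t))$ cannot take every real value — contradiction. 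Hence no solution is defined on all of $\mathbb{R}$.

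For (3) with $\lambda m<0$ put $b=\sqrt{-\lambda m}>0$, so the equation reads $f'=\tfrac1m(f^2-b^2)$ and the constants $\pm b$ are solutions. A non-constant global solution cannot meet $\pm b$, so by continuity it stays in exactly one of $(-\infty,-b)$, $(-b,b)$, $(b,\infty)$. On either unbounded interval I would run the same argument as in (2): with $H(x)=\int_{f(0)}^{x}ds/F(s)$, the integrand $m/(s^2-b^2)$ is $\sim m/s^{2}$ at infinity so $H$ converges there, but it has a non-integrable logarithmic singularity as $s\to\pm b$, so $H$ maps the interval onto a half-line; since $H(f(t))=t$ must range over all of $\mathbb{R}$, this is impossible. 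Therefore $f$ takes values in $(-b,b)$, where separating variables and using the partial fractions $\frac{1}{s^{2}-b^{2}}=\frac{1}{2b}\big(\frac{1}{s-b}-\frac{1}{s+b}\big)$ and solving for $f$ gives $f(t)=-\sqrt{-\lambda m}\,\tanh\!\big(\tfrac{\sqrt{-\lambda m}}{m}(t+C)\big)$, which is the asserted form (up to the sign of the coefficient, since $\tanh$ is odd); this function is genuinely defined for all $t$, and as $C$ ranges over $\mathbb{R}$ its value at $t=0$ ranges over all of $(-b,b)$, so together with $\pm b$ these are all the solutions.

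The only delicate point — and the place where I expect to have to be careful — is exploiting the global-existence hypothesis correctly: for this Riccati-type equation the ``generic'' solution either blows up in finite time (the case $\lambda=0$ and the unbounded branches in case (3)) or exists for all time but only runs off an equilibrium on one side, and in every instance the contradiction comes from correctly identifying whether the improper integral $\int ds/F(s)$ converges or diverges near $\pm\infty$ and near each root of $F$. Once those convergence facts are in hand, the rest is a routine separation of variables.
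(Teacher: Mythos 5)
Your proof is correct, and it reaches the same conclusions as the paper's, but the mechanism in the non-existence cases is genuinely different and in fact a bit tighter. The paper proceeds by formal separation of variables in every case: it writes down the explicit antiderivatives ($\arctan$ when $\lambda m>0$, the logarithm when $\lambda m<0$), solves for $f$, and observes that the resulting formulas ($\tan$, or the $\coth$-type branch) blow up in finite time. You instead phrase everything through the phase-line of the autonomous equation $f'=F(f)$: uniqueness for the locally Lipschitz $F$ prevents crossing equilibria (the paper silently divides by $\tfrac{1}{m}f^2+\lambda$ without first ruling out that $f$ later hits a root of $F$, so your explicit appeal to uniqueness closes a small gap), and the contradiction in case (2) and on the unbounded branches of case (3) comes from the identity $H(f(t))=t$ with $H(x)=\int_{f(0)}^x ds/F(s)$ together with convergence of the improper integral at $\pm\infty$ --- so $H$ has bounded image, or image equal to a half-line, while $t$ must sweep out all of $\mathbb{R}$. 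This buys you an argument that never needs to invert the antiderivative or discuss where $\tan$ is undefined, at the cost of the convergence/divergence bookkeeping near the roots of $F$ and at infinity, which you carry out correctly. One small remark: the solution you derive on $(-b,b)$ is $f(t)=-\sqrt{-\lambda m}\,\tanh\bigl(\tfrac{\sqrt{-\lambda m}}{m}(t+C)\bigr)$, and this minus sign is genuinely there (the positively-signed function in the lemma's statement is not a solution, and oddness of $\tanh$ only converts the sign into a time reversal, not a shift of $C$); this sign slip is present in the paper's own final line of the computation as well, and the version with the minus sign is the one the paper actually uses in the proof of its Lemma 2.1, so your form is the right one.
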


\begin{proof}
Suppose $\lambda=0$. Then it is clear that $f(t)=0$ is a solution. If $f(0)$ is not $0$, then

\begin{align*}
f'(t)&=\frac{f(t)^2}{m}\\
\Rightarrow f(t)&=\frac{1}{C-\frac{t}{m}}
\end{align*}

where $C$ is any real number. However, at $t=mC$, $t$ blows up, which is a contradiction since $f$ has to exist for all time.
\vspace{1em}

If $\lambda m>0$, then 

\begin{equation*}
f'(t)=\frac{f(t)^2}{m}+\lambda.
\end{equation*}

Here, we see that $\displaystyle\frac{f(t)^2}{m}+\lambda$ is never zero since $\lambda m>0$. Integrating and rearranging, we get

\begin{align*}
\int \frac{f'(t)}{\frac{f^2(t)}{m}+\lambda}dt&=\int 1dt\\
\Rightarrow \frac{m}{\lambda}\int\frac{f'(t)}{1+\big(\frac{f(t)}{\sqrt{\lambda m}}\big)^2}dt&=t+C\\
\Rightarrow \sqrt{\frac{m}{\lambda}}\tan^{-1}\bigg(\frac{f(t)}{\sqrt{\lambda m}}\bigg)&=t+C,\\
\end{align*}

so then, $$f(t)=\sqrt{\lambda m}\tan\bigg(\sqrt{\frac{\lambda}{m}}(t+C)\bigg).$$
Since the $\tan$ function does not exist everywhere, $f(t)$ also does not exist everywhere. Thus, if $\lambda m>0$, there are no solutions.
\vspace{1em}

If $\lambda m<0$, then clearly $f(t)=\pm\sqrt{-\lambda m}$ is a solution to the equation. Assume $f(0)$ is not $\pm\sqrt{-\lambda m}$. Then we integrate and rearrange as follows:

\begin{align*}
\int \frac{f'(t)}{\frac{f^2(t)}{m}+\lambda}dt&=\int 1dt\\
\frac{m}{2\sqrt{-\lambda m}}\ln\bigg|\frac{1-\frac{f(t)}{\sqrt{-\lambda m}}}{1+\frac{f(t)}{\sqrt{-\lambda m}}}\bigg|&=t+C\\
\Rightarrow \bigg|\frac{1-\frac{f(t)}{\sqrt{-\lambda m}}}{1+\frac{f(t)}{\sqrt{-\lambda m}}}\bigg|&=e^{2\frac{\sqrt{-\lambda m}}{m}(t+C)}.
\end{align*}

If $\displaystyle\frac{1-\frac{f(t)}{\sqrt{-\lambda m}}}{1+\frac{f(t)}{\sqrt{-\lambda m}}}=e^{2\frac{\sqrt{-\lambda m}}{m}(t+C)}$, then $$f(t)=\sqrt{-\lambda m}\bigg(\frac{1-e^{2\frac{\sqrt{-\lambda m}}{m}(t+C)}}{1+e^{2\frac{\sqrt{-\lambda m}}{m}(t+C)}}\bigg)=\sqrt{-\lambda m}\tanh\bigg(\frac{\sqrt{-\lambda m}}{m}(t+C)\bigg).$$

If $\displaystyle\frac{1-\frac{f(t)}{\sqrt{-\lambda m}}}{1+\frac{f(t)}{\sqrt{-\lambda m}}}=-e^{2\frac{\sqrt{-\lambda m}}{m}(t+C)}$, then $f(t)=\sqrt{-\lambda m}\displaystyle\bigg(\frac{1+e^{2\frac{\sqrt{-\lambda m}}{m}(t+C)}}{1-e^{2\frac{\sqrt{-\lambda m}}{m}(t+C)}}\bigg)$. In this case, at $t=-C$, $f(t)$ does not exist, which is a contradiction.
\end{proof}

\begin{lemma}\label{lemma: chen without ricci killing}
Let $G$ be a unimodular Lie group with left-invariant metric, $g$. If $X$ is left-invariant, $\tr(q\circ ad_X)=0$, and $\displaystyle\frac{1}{2}\mathcal{L}_Xg-\frac{1}{m}X^*\otimes X^*=q$, where $q$ is left-invariant, then $X$ is Killing.
\end{lemma}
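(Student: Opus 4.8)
The plan is to work entirely with left-invariant tensors and reduce the Killing condition $\mathcal{L}_X g=0$ to the vanishing of a single trace. Since $X$ and $g$ are left-invariant, the endomorphism $Y\mapsto\nabla_Y X$ of $\mathfrak{g}$ is well defined; write it as $S+\mathcal{A}$, where $S$ is its $g$-symmetric part and $\mathcal{A}$ its $g$-skew part. By the Koszul formula, $g(SY,Z)=\tfrac12(\mathcal{L}_X g)(Y,Z)$, so $X$ is Killing exactly when $S=0$. I would regard $q$ and $X^*\otimes X^*$ as symmetric endomorphisms via $g$ — the latter being $Y\mapsto g(X,Y)X$ — so that the hypothesis $\tfrac12\mathcal{L}_X g-\tfrac1m X^*\otimes X^*=q$ becomes the endomorphism identity $S=q+\tfrac1m X^*\otimes X^*$.

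Next I would evaluate $\tr(S\circ ad_X)$ in two ways. From the identity just written, $\tr(S\circ ad_X)=\tr(q\circ ad_X)+\tfrac1m\tr\big((X^*\otimes X^*)\circ ad_X\big)$; the first term is $0$ by hypothesis, and the second is $0$ because $(X^*\otimes X^*)\circ ad_X$ has rank at most one with image in $\mathbb{R}X$, so its trace equals $g(X,[X,X])=0$. For the second evaluation, use that on left-invariant fields the torsion-free identity gives $ad_X(Y)=[X,Y]=\nabla_X Y-\nabla_Y X$, i.e. $ad_X=L_X-(S+\mathcal{A})$ with $L_X\colon Y\mapsto\nabla_X Y$; since $g$ is left-invariant, $Xg(Y,Z)=0$ shows $L_X$ is $g$-skew. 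The trace of the composition of a symmetric and a skew endomorphism vanishes, so $\tr(S L_X)=\tr(S\mathcal{A})=0$, leaving $\tr(S\circ ad_X)=-\tr(S^2)=-\norm{S}^2$.

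Comparing the two computations yields $\norm{S}^2=0$, hence $S=0$, hence $\mathcal{L}_X g=0$ and $X$ is Killing. I do not expect a serious obstacle here: the only care needed is the linear-algebra bookkeeping — that $L_X$ is genuinely $g$-skew, that the cross terms $\tr(S L_X)$ and $\tr(S\mathcal{A})$ vanish, and that $\tr(S^2)$ is the nonnegative squared norm of the symmetric operator $S$ — together with the rank-one check $\tr\big((X^*\otimes X^*)\circ ad_X\big)=0$. It is worth noting that unimodularity of $G$ appears not to be used in this step itself; it was what forced $X$ to be left-invariant in Lemma~\ref{lemma:chen_without_ricci}, and is retained in the hypothesis only to match how the lemma is applied.
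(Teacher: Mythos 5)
Your proof is correct and is essentially the same argument as the paper's: both pair the defining equation with $ad_X$, take traces, kill the rank-one term $\tr\bigl((X^*\otimes X^*)\circ ad_X\bigr)=0$ using $[X,X]=0$, and conclude from the vanishing of the trace of the square of a $g$-symmetric operator. The only cosmetic difference is that you phrase the symmetric part as $S$ (via $\nabla X$ and the skewness of $L_X$) where the paper works directly with the matrix $\tfrac12(ad_X+ad_X^T)$ and the identity $\tr\bigl((ad_X+ad_X^T)\,ad_X\bigr)=\tfrac12\tr\bigl((ad_X+ad_X^T)^2\bigr)$; your observation that unimodularity is not needed in this step also matches the paper's proof.
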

\vspace{1em}

\begin{proof}[Proof of Lemma \ref{lemma: chen without ricci killing}]
Let $\{X_i\}$ be an orthonormal basis relative to $g$ and let $X=a_1X_1+a_2X_2+...+a_nX_n$. Then, plugging in $(X_i,X_j)$ into $q=\displaystyle\frac{1}{2}\mathcal{L}_Xg-\frac{1}{m}X^*\otimes X^*$, we get

\begin{equation*}
q(X_i,X_j)=\displaystyle\frac{1}{2}\big(g([X_i,X],X_j)+g([X_j,X],X_i)-\frac{1}{m}g(X,X_i)g(X,X_j).
\end{equation*}

We denote the projection of $X_i$ onto $X$, as $\proj_{X}X_i$. Since $\proj_{X}X_i=\displaystyle\frac{g(X,X_i)X}{|X|^2}$ and $ad_X(X_i)=[X,X_i]$, we have the following: 

\begin{equation*}
q(X_i,X_j)=\displaystyle\frac{1}{2}\big(g(ad_X(X_i),X_j)+g(ad_X(X_j),X_i)\big)-\frac{|X|^2}{m}g(\proj_XX_i,X_j).
\end{equation*}

Thus, we have the following equation, where we view $q$, $ad_X$, and $\proj_X$ as matrices:

\begin{equation*}
q=\displaystyle\frac{1}{2}\big(ad_X+ad_X^T\big)-\frac{|X|^2}{m}\proj_X.
\end{equation*}

We denote ``$\cdot$" as the matrix multiplication symbol. Multiplying both sides by the matrix, $ad_X$, we get:

\begin{equation*}
\begin{aligned}
q \cdot ad_X&=\displaystyle\frac{1}{2}\big(ad_X+ad_X^T\big)\cdot ad_X-\frac{|X|^2}{m}\proj_X\cdot ad_X\\
&=\displaystyle\frac{1}{2}\big(ad_X^2+ad_X^T\cdot ad_X\big)-\frac{|X|^2}{m}\proj_X\cdot ad_X.
\end{aligned}
\end{equation*}

Taking the trace of both sides, we get

\begin{equation*}
\begin{aligned}
\tr(q \cdot ad_X)&=\displaystyle\frac{1}{2} \tr\big(ad_X^2+ad_X^T\cdot ad_X\big)-\frac{|X|^2}{m} \tr(\proj_X\cdot ad_X).\\
\end{aligned}
\end{equation*}

Then, since $\tr(q\cdot ad_X)=0$ and using that for any $n\times n$ matrix $A$, $\tr(A^2)=\tr((A^T)^2)$, we get

\begin{equation*}
0=\displaystyle\frac{1}{4} \tr\big((ad_X+ad_X^T)^2)-\frac{|X|^2}{m} \tr(\proj_X\cdot ad_X).
\end{equation*}

Now, plugging in $X_i$, one of the orthonormal basis vectors into $ad_X\cdot \proj_X$ and using that $\tr(AB)=\tr(BA)$ for any two matrices $A$ and $B$, we get:

\begin{equation*}
\begin{aligned}
ad_X\cdot \proj_X(X_i)&=\frac{a_i}{|X|^2}[X,X]\\
&=0.
\end{aligned}
\end{equation*}

Thus, we have 
$0=\displaystyle\frac{1}{4} \tr\big((ad_X+ad_X^T)^2)$.
\vspace{1em}

Now, since $ad_X+ad_X^T$ is symmetric, we can diagonalize $ad_X+ad_X^T$, and call the diagonalized matrix $D$. Then, $\tr((ad_X+ad_X^T)^2)=\tr(D^2)$. Since the eigenvalues in $D^2$ are nonnegative and $\tr(D^2)$ is the sum of the eigenenvalues of $D^2$, $\displaystyle\frac{1}{2}(ad_X+ad_X^T)=0$. Thus, $X$ is Killing.
\end{proof}



Next, we will apply Lemma \ref{lemma:chen_without_ricci} to metrics which satisfy $\ric_X^m=Ag$.
\vspace{1em}

\begin{theorem}\label{theorem: ric_X^m and chen without ricci}
Let $G$ be a Lie group and let $\Gamma$ be a discrete group of isometries which acts cocompactly on $G$, where $\pi:G\rightarrow \bigslant{G}{\Gamma}$ is a covering map. If $(\bigslant{G}{\Gamma},g,X)$ satisfies $\ric_X^m=Ag$, then $\widetilde{X}=\pi^*(X)$ is left invariant and Killing.
\end{theorem}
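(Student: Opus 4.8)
The plan is to lift the $m$-quasi Einstein equation from the compact quotient up to $G$ and then feed the outcome into Lemmas~\ref{lemma:chen_without_ricci} and~\ref{lemma: chen without ricci killing}. Write $\widetilde{g} := \pi^*g$ for the (left-invariant) metric on $G$ and $\widetilde{X} := \pi^*X$; since $\pi$ is a Riemannian covering, $\ric_{\widetilde{g}} = \pi^*\ric_g$, $\mathcal{L}_{\widetilde{X}}\widetilde{g} = \pi^*(\mathcal{L}_X g)$ and $\widetilde{X}^*\otimes\widetilde{X}^* = \pi^*(X^*\otimes X^*)$, so pulling back $\ric_X^m=Ag$ yields
\[
\ric_{\widetilde{g}} + \tfrac12\mathcal{L}_{\widetilde{X}}\widetilde{g} - \tfrac1m\,\widetilde{X}^*\otimes\widetilde{X}^* \;=\; A\,\widetilde{g}
\]
on $G$. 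Being the pullback of a vector field on $\bigslant{G}{\Gamma}$, the field $\widetilde{X}$ is automatically invariant under $\Gamma$.

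First I would establish that $\widetilde{X}$ is left-invariant. Rewrite the displayed identity as
\[
\tfrac12\mathcal{L}_{\widetilde{X}}\widetilde{g} - \tfrac1m\,\widetilde{X}^*\otimes\widetilde{X}^* \;=\; A\,\widetilde{g} - \ric_{\widetilde{g}} \;=:\; q .
\]
Since the Ricci tensor of a left-invariant metric is left-invariant, $q$ is a left-invariant symmetric $2$-tensor. Lemma~\ref{lemma:chen_without_ricci}, applied with this $q$, the cocompactly acting discrete group $\Gamma$, and the $\Gamma$-invariant field $\widetilde{X}$, then gives that $\widetilde{X}$ is left-invariant.

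It remains to show $\widetilde{X}$ is Killing, which I would deduce from Lemma~\ref{lemma: chen without ricci killing}; its only hypothesis still to verify is $\tr(q\circ ad_{\widetilde{X}})=0$. As in the proof of Lemma~\ref{lemma:chen_without_ricci}, $G$ is unimodular, so $\tr(ad_{\widetilde{X}})=0$ and the $A\,\widetilde{g}$ term contributes nothing; thus the real task is to verify $\tr(\ric_{\widetilde{g}}\circ ad_{\widetilde{X}})=0$ for the left-invariant field $\widetilde{X}$, and this is the step I expect to be the main obstacle. It is exactly the Ricci-tensor identity underlying the proof of \cite[Theorem~2.3]{ChenLiangZhu}; alternatively one can argue directly that the Lie-algebra automorphisms $\mathrm{Ad}_{\exp(t\widetilde{X})}$ of $\mathfrak{g}$ generate a curve $t\mapsto \mathrm{Ad}_{\exp(t\widetilde{X})}^*\widetilde{g}$ of left-invariant metrics, each isometric to $\widetilde{g}$, hence all sharing the same constant scalar curvature and, since $\tr(ad_{\widetilde{X}})=0$, the same total volume on $\bigslant{G}{\Gamma}$; the total scalar curvature of this family is then independent of $t$, so its first variation vanishes, forcing $\langle\ric_{\widetilde{g}},\mathcal{L}_{\widetilde{X}}\widetilde{g}\rangle\equiv 0$ on the compact quotient, which is precisely $\tr(\ric_{\widetilde{g}}\circ ad_{\widetilde{X}})=0$. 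Granting this, Lemma~\ref{lemma: chen without ricci killing} yields $ad_{\widetilde{X}}+ad_{\widetilde{X}}^{T}=0$, i.e. $\widetilde{X}$ is Killing. Apart from that trace identity, the argument is just bookkeeping about Riemannian coverings and unimodularity.
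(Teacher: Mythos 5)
Your proposal follows the same route as the paper: pull everything back to $G$, set $q = A\widetilde{g}-\ric_{\widetilde{g}}$ (left-invariant since $\widetilde{g}$ is), and invoke Lemmas~\ref{lemma:chen_without_ricci} and~\ref{lemma: chen without ricci killing}. Where you differ is that you notice Lemma~\ref{lemma: chen without ricci killing} carries the hypothesis $\tr(q\circ ad_{\widetilde{X}})=0$, which the paper's proof applies without verifying; you correctly reduce it (via $\tr(ad_{\widetilde{X}})=0$ on a unimodular group) to the identity $\tr(\ric_{\widetilde{g}}\circ ad_{\widetilde{X}})=0$, and your variational sketch for that identity is sound: $\mathrm{Ad}_{\exp(t\widetilde{X})}$ gives left-invariant metrics isometric to $\widetilde{g}$ with the same (constant) scalar curvature and, by unimodularity, the same volume form, and since $\langle \widetilde{g},\mathcal{L}_{\widetilde{X}}\widetilde{g}\rangle = 2\divergence\widetilde{X}=0$ the first variation of total scalar curvature collapses to $\langle\ric_{\widetilde{g}},\mathcal{L}_{\widetilde{X}}\widetilde{g}\rangle=0$, i.e.\ $\tr(\ric_{\widetilde{g}}\circ ad_{\widetilde{X}})=0$. (One small caution: the curve $\mathrm{Ad}_{\exp(t\widetilde{X})}^*\widetilde{g}$ need not descend to $\bigslant{G}{\Gamma}$ when $\Gamma$ is not a group of left translations; it is cleaner to run the first-variation argument on the quotient directly with the flow of $X$, using diffeomorphism invariance of the Einstein--Hilbert functional, which yields the same conclusion.) So your write-up is, if anything, more complete than the paper's at this point.
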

\vspace{1em}

\begin{proof}
First, we let $\widetilde{g}=\pi^*(g)$, be the pullback metric of $g$. Since $\pi$ is a local isometry, $\ric_{\widetilde{X}}^m=A\widetilde{g}$
\vspace{1em}


Since $A\widetilde{g}-\ric_{\widetilde{g}}$ is left-invariant, by Lemmas \ref{lemma:chen_without_ricci} and \ref{lemma: chen without ricci killing}, $\widetilde{X}$ is left-invariant and Killing.
\end{proof}
\vspace{1em}

We immediately get the following corollary, which we will use throughout Section \ref{section: lie groups}.

\begin{corollary}\label{cor:Unimodular leftinvariant killingfield}
If $M^n$ is a unimodular Lie Group and if $\ric_X^m=Ag$ with $X$ a left-invariant vector field and $g$ a left-invariant metric, then $X$ is a Killing field.
\end{corollary}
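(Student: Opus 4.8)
The plan is to reduce to Lemma~\ref{lemma: chen without ricci killing}. First I would rewrite the equation $\ric_X^m=Ag$ as
\[
\frac{1}{2}\mathcal{L}_Xg-\frac{1}{m}X^*\otimes X^*=Ag-\ric=:q .
\]
Since $g$ is left invariant and the Ricci tensor of a left-invariant metric is again left invariant (left translations are isometries), $q$ is left invariant. Thus the only hypothesis of Lemma~\ref{lemma: chen without ricci killing} that still needs checking is $\tr(q\circ ad_X)=0$. In an orthonormal basis $\tr(g\circ ad_X)=\tr(ad_X)=0$ because $M^n$ is unimodular, so it suffices to show
\[
\tr(\ric\circ ad_X)=0
\]
for every left-invariant vector field $X$ on a unimodular Lie group with left-invariant metric; then Lemma~\ref{lemma: chen without ricci killing} gives that $X$ is Killing.

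For this identity I would argue as follows. For left-invariant fields, $\mathcal{L}_Xg(Y,Z)=-g([X,Y],Z)-g(Y,[X,Z])=-g\big((ad_X+ad_X^{T})Y,Z\big)$, and since $\ric$ is symmetric this gives $\langle\ric,\mathcal{L}_Xg\rangle=-2\tr(\ric\circ ad_X)$. On the other hand $\mathcal{L}_Xg=2\,\mathrm{sym}(\nabla X^{\flat})$, so $\langle\ric,\mathcal{L}_Xg\rangle=2\langle\ric,\nabla X^{\flat}\rangle=2\divergence\big(\ric(X,\cdot)\big)-2\langle\divergence\ric,X\rangle$. The last term vanishes by the contracted second Bianchi identity $\divergence\ric=\tfrac12\,d(\mathrm{scal})$ together with the constancy of scalar curvature on a homogeneous space. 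The term $\divergence\big(\ric(X,\cdot)\big)$ vanishes because $\ric(X,\cdot)^{\sharp}$ is a left-invariant vector field, and a one-line Koszul computation shows that every left-invariant vector field $V$ on a unimodular Lie group satisfies $\divergence V=-\tr(ad_V)=0$. Combining these, $\langle\ric,\mathcal{L}_Xg\rangle=0$, i.e.\ $\tr(\ric\circ ad_X)=0$.

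The only real content is this last identity; once it is available the corollary is immediate from Lemma~\ref{lemma: chen without ricci killing}, and I expect it to be the main obstacle only in the sense of bookkeeping. In fact it is exactly the ingredient used for the quotient $G/\Gamma$ inside the proof of Theorem~\ref{theorem: ric_X^m and chen without ricci}, and it is the Ricci-tensor half of the argument for \cite[Theorem~1.1]{ChenLiangZhu}. Note that this route uses only unimodularity of $M^n$, not compactness; alternatively, for any unimodular Lie group admitting a discrete cocompact group of isometries---which includes every three-dimensional Lie group treated in Section~\ref{section: lie groups}---one may instead push $X$ down to a compact quotient (it is invariant under left translations, hence under $\Gamma$) and apply Theorem~\ref{theorem: ric_X^m and chen without ricci} directly.
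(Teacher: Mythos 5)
Your proposal is correct and follows the paper's own route: the corollary is obtained by applying Lemma~\ref{lemma: chen without ricci killing} to $q=Ag-\ric$, using unimodularity to kill $\tr(g\circ ad_X)=\tr(ad_X)$. The one place you go beyond the paper is in explicitly verifying the remaining hypothesis $\tr(\ric\circ ad_X)=0$ via the identity $-2\tr(\ric\circ ad_X)=\langle\ric,\mathcal{L}_Xg\rangle=2\divergence(\ric(X,\cdot))-2(\divergence\ric)(X)=0$; the paper leaves this step implicit (it is the ``properties of the Ricci tensor'' half of Chen-Liang-Zhu's argument), and your verification of it is valid.
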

\vspace{1em}

\begin{lemma}\label{lemma: aiaj=0}
Suppose $(M^n,g)$ is a Lie group which satisfies $\ric_X^m=Ag$ where  $X$ is nonzero, left-invariant, and Killing. If $\{X_1, X_2,... X_n\}$ is an eigenbasis of the Ricci tensor of left invariant fields, then $X$ is a multiple of one of the eigenbasis vectors (ie there exists $1\leq m\leq n$ such that $X=a_mX_m$).
\end{lemma}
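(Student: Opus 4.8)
The plan is to use the extra hypothesis that $X$ is Killing (which, in the situation of interest, comes from Corollary \ref{cor:Unimodular leftinvariant killingfield}) to collapse the $m$-quasi Einstein equation into a purely algebraic statement about the Ricci tensor, and then to read off the conclusion from the eigenspace structure of $\ric$.

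First I would note that since $X$ is Killing, $\Lie_X g = 0$, so the equation $\ric_X^m = Ag$ reduces to
\[
\ric = Ag + \tfrac{1}{m}\, X^*\otimes X^*.
\]
Viewing $\ric$ as a $g$-self-adjoint endomorphism, this says $\ric(Y) = AY + \tfrac{1}{m} g(X,Y)\,X$ for every left-invariant $Y$. In particular $\ric(X) = \bigl(A + \tfrac{1}{m}\abs{X}^2\bigr)X$, so $X$ is an eigenvector of $\ric$, and every $Y$ with $g(X,Y)=0$ satisfies $\ric(Y)=AY$. Hence $\ric$ has at most the two eigenvalues $A$ and $A + \tfrac{1}{m}\abs{X}^2$. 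Because $g$ is Riemannian and $X\neq 0$ we have $\abs{X}^2 > 0$, and $m\neq 0$, so these two values are distinct; therefore the $\bigl(A+\tfrac{1}{m}\abs{X}^2\bigr)$-eigenspace of $\ric$ is exactly the line $\mathbb{R}X$, which is one-dimensional.

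To finish, I would observe that any eigenbasis $\{X_1,\dots,X_n\}$ of $\ric$ splits as a disjoint union of subsets, one per eigenspace, with exactly $\dim$-many basis vectors lying in each eigenspace (a standard linear-algebra count). Since the $\bigl(A+\tfrac{1}{m}\abs{X}^2\bigr)$-eigenspace is the line $\mathbb{R}X$, exactly one basis vector $X_m$ lies on it, and it is a nonzero scalar multiple of $X$; equivalently $X = a_m X_m$. (If one prefers and the eigenbasis is taken orthonormal — which is harmless since $\ric$ is $g$-symmetric — this is even quicker: writing $X=\sum a_k X_k$ and evaluating the reduced equation on $(X_i,X_j)$ for $i\neq j$ gives $0 = \tfrac{1}{m} a_i a_j$, so at most one $a_i$ is nonzero, and exactly one since $X\neq 0$.)

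I do not expect a genuine obstacle here; the only points needing care are the reduction step, which relies entirely on $X$ being Killing, and the verification that the two candidate eigenvalues $A$ and $A + \tfrac{1}{m}\abs{X}^2$ are truly distinct, which uses positive-definiteness of $g$ together with $X\neq 0$ and $m\neq 0$.
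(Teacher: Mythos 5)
Your proposal is correct. Its parenthetical remark is essentially the paper's own proof: the paper takes the $X_i$ orthonormal, uses $\mathcal{L}_Xg=0$ to reduce $\ric_X^m(X_i,X_j)$ for $i\neq j$ to $-\frac{1}{m}a_ia_j$, and concludes from $Ag(X_i,X_j)=0$ that at most one $a_k$ is nonzero. Your main argument is a slightly different, more conceptual route: rewriting the equation as $\ric = Ag+\frac{1}{m}X^*\otimes X^*$, you identify $X$ as an eigenvector of the Ricci endomorphism with eigenvalue $A+\frac{1}{m}|X|^2$, check that this eigenvalue differs from $A$ because $|X|^2>0$ and $m\neq 0$, and conclude that the corresponding eigenspace is the line $\mathbb{R}X$, which must contain exactly one vector of any eigenbasis. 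What this buys you is independence from the choice of basis within eigenspaces (you do not need the eigenbasis to be orthonormal, only that it consists of eigenvectors), plus the extra information that the special eigenvalue is $A+\frac{1}{m}|X|^2$ while all other principal Ricci curvatures equal $A$; the paper's computation is shorter but implicitly assumes orthonormality of the $X_i$ (as it does throughout, via Milnor's basis). Both arguments hinge on the same key reduction, namely that $X$ Killing makes the equation purely algebraic in $\ric$ and $X^*\otimes X^*$.
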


\begin{proof}
Since $X$ is left-invariant and Killing, we have for all $1\leq i,j\leq n$ where $i\neq j$,

\begin{equation*}
    \ric_X^m(X_i,X_j)=-\frac{1}{m}a_ia_j.
\end{equation*}
\vspace{1em}

Now $\ric_X^m(X_i,X_j)=Ag(X_i,X_j)=0$ for all sets of $i,j$ if and only if at least $n-1$ sets of $a_k$ are $0$. Thus, $X=a_mX_m$ for some $1\leq m\leq n$. 
\end{proof}

\section{Preliminaries About Locally Homogeneous 3-Manifolds}\label{section: locally homogeneous 3-manifolds}

In this section, we will discuss locally homogeneous three-manifolds, which we will use to prove our main results. We first give definitions of locally homogeneous and homogeneous, which can be found in \cite{IsenbergJackson}.

\begin{defn}
Let $(M,g)$ be a Riemmanian manifold. Then $(M,g)$ is locally homogeneous if for every pair of points $x,y\in M$, there exists neighborhoods $U_x$ of $x$ and $V_y$ of $y$ such that there is an isometry $\psi$ mapping $(U_x,g|_{U_x})$ to $(V_y,g|_{V_y})$, with $\psi(x)=y$.
\end{defn}

\begin{defn}
Let $(M,g)$ be a Riemmanian manifold. Then $(M,g)$ is homogeneous if for every pair of points $x,y\in M$, there exists an isometry $\psi$, $\psi(x)=y$.
\end{defn}

According to Singer in \cite{Singer}, for every locally homogeneous geometry $(M^3,g)$, the universal cover, $(\widetilde{M^3},\widetilde{g})$, is homogeneous. If $(\widetilde{M}^3,\widetilde{g})$ is a homogeneous, simply connected manifold that admits a compact quotient, then it is one of the following: $\mathbb{R}^3$, $SU(2)$, $\widetilde{SL_2(\mathbb{R})}$, $Nil$, $E(1,1)$, $E(2)$, $H^3$, $S^2\times \mathbb{R}$, or $H^2\times \mathbb{R}$ \cite[Table~1]{IsenbergJackson}. 
\vspace{1em}

Since $\widetilde{X}$ is a left-invariant solution to $\ric_{\widetilde{X}}^m=A\widetilde{g}$ if and only if $d\pi(\widetilde{X})$ is a solution to $\ric_X^m=Ag$, where $\pi:\widetilde{M}\rightarrow M$ is the universal covering map, we study these nine geometries in order to classify $m$-quasi Einstein metrics on locally homogeneous three manifolds. Of the nine geometries, $\mathbb{R}^3$, $SU(2)$, $\widetilde{SL_2(\mathbb{R})}$, $Nil$, $E(1,1)$, and $E(2)$ are Lie groups. We can also use that $H^2$ is a Lie group to study $H^2\times\mathbb{R}$. We will explicitly calculate the metrics on the Lie groups which satisfy $\ric_X^m=Ag$ using the methods of Section \ref{section: Unimodular Lie Groups}. We will study the equation $\frac{1}{2}\mathcal{L}_Xg-\frac{1}{m}X^*\otimes X^*=\lambda g$ in order to calculate the $m$-quasi Einstein metrics on $S^2\times \mathbb{R}$ and $H^3$.
\vspace{1em}
%

Throughout this paper, we will use the following computations by Milnor:
\begin{lemma}\cite[pages~305, 307]{Milnor}\label{lemma:Milnor}
Let $G$ be a 3-dimensional unimodular Lie group with left invariant metric. If $L$ is self-adjoint, then there exists an orthonormal basis $\{X_1,X_2,X_3\}$  consisting of eigenvectors $LX_i=\lambda_i^* X_i$. We obtain the following:

$$[X_2, X_3]=\lambda_1^*X_1$$
$$[X_3, X_1]=\lambda_2^*X_2$$
$$[X_1, X_2]=\lambda_3^*X_3.$$
\vspace{1em}

The following chart gives us the signs of $\lambda_i^*$ for $SU(2)$, $\widetilde{SL_2(\mathbb{R})}$, $E(2)$, $E(1,1)$, $Nil$, and $\mathbb{R}^3$.

\begin{table}[ht]
\bgroup
\def\arraystretch{1.5}
\begin{center}
\begin{tabular}{ c c c c }
Lie Group & $\lambda_1^*$ & $\lambda_2^*$ & $\lambda_3^*$ \\
\hline
$Nil$ & $\lambda_1^*>0$ & $\lambda_2^*=0$ & $\lambda_3^*=0$\\
$\widetilde{SL_2(\mathbb{R})}$ & $\lambda_1^*>0$ & $\lambda_2^*>0$ & $\lambda_3^*<0$\\
$E(1,1)$ & $\lambda_1^*>0$ & $\lambda_2^*<0$ & $\lambda_3^*=0$\\
$E(2)$ & $\lambda_1^*>0$ & $\lambda_2^*>0$ & $\lambda_3^*=0$\\
$\mathbb{R}^3$ & $\lambda_1^*=0$ & $\lambda_2^*=0$ & $\lambda_3^*=0$\\
$SU(2)$ & $\lambda_1^*>0$ & $\lambda_2^*>0$ & $\lambda_3^*>0$
\end{tabular}
\end{center}
\egroup

\caption{}
\label{table:lie bracket}
\end{table}
\vspace{1em}

From now on, let $\lambda_i=|\lambda_i^*|$. 
\end{lemma}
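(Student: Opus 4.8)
Since Lemma~\ref{lemma:Milnor} is quoted from Milnor, the plan is to recall the structure of his argument. Fix a positively oriented orthonormal basis of the Lie algebra $\mathfrak{g}$ of $G$ and let $\times$ denote the associated cross product, characterized by $g(u\times v,w)=\det(u,v,w)$ in that basis. Because the Lie bracket $[\cdot,\cdot]\colon\mathfrak{g}\times\mathfrak{g}\to\mathfrak{g}$ is bilinear and alternating, and in dimension $3$ the map $u\wedge v\mapsto u\times v$ identifies $\Lambda^2\mathfrak{g}$ with $\mathfrak{g}$, there is a unique linear endomorphism $L\colon\mathfrak{g}\to\mathfrak{g}$ with $[u,v]=L(u\times v)$ for all $u,v$. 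All of the relevant structure is then encoded in $L$.

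First I would show that $G$ is unimodular if and only if $L$ is self-adjoint. Writing $L=L_s+L_a$ for its symmetric and antisymmetric parts, $L_a$ is given by $L_a(v)=\xi\times v$ for a unique $\xi\in\mathfrak{g}$, so $[u,v]=L_s(u\times v)+\xi\times(u\times v)$; a short computation in an orthonormal basis shows $\tr(ad_u)$ is a nonzero constant multiple of $g(\xi,u)$, which vanishes for every $u$ exactly when $\xi=0$, i.e.\ when $L=L_s$. With $L$ self-adjoint, the spectral theorem supplies an orthonormal basis $\{X_1,X_2,X_3\}$ of eigenvectors, $LX_i=\lambda_i^*X_i$; after replacing $X_1$ by $-X_1$ if necessary we may take this basis positively oriented, so that $X_1\times X_2=X_3$, $X_2\times X_3=X_1$, $X_3\times X_1=X_2$. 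Then $[X_2,X_3]=L(X_2\times X_3)=LX_1=\lambda_1^*X_1$, and cyclically, which are exactly the stated bracket relations.

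It remains to match the sign pattern of $(\lambda_1^*,\lambda_2^*,\lambda_3^*)$ with the groups in Table~\ref{table:lie bracket}. Reversing orientation flips all three signs simultaneously, and rescaling the basis vectors rescales the $\lambda_i^*$ by positive factors, so the isomorphism type of $\mathfrak{g}$ depends only on how many $\lambda_i^*$ vanish and, among the nonzero ones, on the sign pattern up to a global flip. Going through the cases and normalizing: three nonzero of one sign gives the relations of $\mathfrak{su}(2)$; two of one sign and one of the other gives $\mathfrak{sl}(2,\mathbb{R})$; two nonzero of equal sign and one zero gives $\mathfrak{e}(2)$; two nonzero of opposite sign and one zero gives $\mathfrak{e}(1,1)$; exactly one nonzero gives the Heisenberg algebra $\mathfrak{nil}$; and all zero gives the abelian $\mathbb{R}^3$. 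Identifying each algebra with its simply connected group produces the table. The main obstacle here is not conceptual depth but bookkeeping: the equivalence of unimodularity with self-adjointness of $L$ needs the careful index computation above, and the last step presupposes the classification of $3$-dimensional unimodular Lie algebras so that the six sign patterns can be matched bijectively with the six groups listed. Since all of this is carried out in Milnor's paper, for the write-up I would reproduce only the bracket relations of the middle step, which are what the rest of the paper uses, and cite \cite[pp.~305--307]{Milnor} for the remaining details.
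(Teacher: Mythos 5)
Your reconstruction is correct and follows exactly the argument the paper is citing: the paper gives no proof of this lemma, deferring entirely to Milnor, and your outline (the identification $[u,v]=L(u\times v)$ via the cross product, unimodularity $\Leftrightarrow$ self-adjointness of $L$ via $\tr(\mathrm{ad}_u)=-2g(\xi,u)$, the spectral theorem plus an orientation adjustment to get the cyclic bracket relations, and the sign-pattern case analysis) is precisely Milnor's Lemmas 4.1--4.3 and the table on p.~307. Nothing further is needed beyond the citation, as you note.
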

\vspace{1em}

Because we will be using that $X$ is Killing for unimodular Lie groups with $\ric_X^m=Ag$, it will be useful to calculate $\mathcal{L}_Xg$. 

\begin{prop}\label{prop:lie derivative computation}
Let $X=a_1X_1+a_2X_2+a_3X_3$ be left-invariant vector field on a 3-dimensional unimodular Lie group with left invariant metric. Then using the same notation as in Lemma \ref{lemma:Milnor}, we have the following:

\begin{align*}
&\mathcal{L}_Xg(X_i,X_i)=0\text{ for all }i\\[0.5ex]  
&\mathcal{L}_Xg(X_1,X_2)=-a_3\lambda_2^*+a_3\lambda_1^*\\[0.5ex]
&\mathcal{L}_Xg(X_1,X_3)=-a_2\lambda_1^*+a_2\lambda_3^*\\[0.5ex]
&\mathcal{L}_Xg(X_2,X_3)=-a_1\lambda_3^*+a_1\lambda_2^*
\end{align*}

%
%
%
\end{prop}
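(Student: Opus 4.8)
The plan is a direct computation of the Lie derivative $\mathcal{L}_X g$ on each pair of basis vectors, using the standard formula
\[
\mathcal{L}_X g(Y,Z) = g(\nabla_Y X, Z) + g(\nabla_Z X, Y),
\]
together with the bracket relations $[X_2,X_3]=\lambda_1^* X_1$, $[X_3,X_1]=\lambda_2^* X_2$, $[X_1,X_2]=\lambda_3^* X_3$ from Lemma \ref{lemma:Milnor}. First I would record the Koszul formula for a left-invariant metric on a unimodular Lie group: since $g$ is bi-invariant-adapted in the sense that the structure constants are totally controlled by the $\lambda_i^*$, one has $2g(\nabla_{X_i}X_j, X_k) = g([X_i,X_j],X_k) - g([X_j,X_k],X_i) + g([X_k,X_i],X_j)$. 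Plugging in the three bracket relations and the orthonormality of $\{X_1,X_2,X_3\}$ gives explicit connection coefficients; e.g. $\nabla_{X_1}X_1 = 0$ and more generally $\nabla_{X_i}X_i = 0$ for each $i$, which immediately yields $\mathcal{L}_X g(X_i,X_i) = g(\nabla_{X_i}X, X_i) + g(\nabla_{X_i}X, X_i) = 2g(\nabla_{X_i}X, X_i)$, and since $X$ is left-invariant this expands via $\nabla_{X_i}(a_1X_1+a_2X_2+a_3X_3) = \sum_k a_k \nabla_{X_i}X_k$, each term of which is orthogonal to $X_i$ (the connection is skew in a suitable sense here). That gives the first line.

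Next I would handle the off-diagonal entries. For $\mathcal{L}_X g(X_1,X_2)$, I compute $g(\nabla_{X_1}X, X_2) + g(\nabla_{X_2}X, X_1) = \sum_k a_k\big(g(\nabla_{X_1}X_k,X_2) + g(\nabla_{X_2}X_k,X_1)\big)$. Only the $k=3$ term survives (the $k=1$ and $k=2$ terms vanish by the skew-symmetry $g(\nabla_{X_i}X_j,X_j)=0$ and by orthogonality of the $\nabla_{X_i}X_i$ with the remaining vectors), and evaluating $g(\nabla_{X_1}X_3,X_2)$ and $g(\nabla_{X_2}X_3,X_1)$ via Koszul gives combinations of $\lambda_1^*,\lambda_2^*,\lambda_3^*$ that collapse to $-a_3\lambda_2^* + a_3\lambda_1^*$. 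The cyclic symmetry of the bracket relations then produces the other two formulas by simply permuting indices $1\to2\to3\to1$ and the corresponding $\lambda^*$'s. I should be careful with signs — this is where the bookkeeping is delicate — so I would double-check using the identity $\nabla_{X_i}X_j + \nabla_{X_j}X_i$ which, for these Milnor frames, equals $0$ when $i\neq j$ precisely when the relevant $\lambda^*$'s are arranged symmetrically, and differs from $[X_i,X_j]$-type terms otherwise.

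The main obstacle, such as it is, is purely the sign and index bookkeeping in the Koszul/Milnor connection coefficients: Milnor's frame gives $\nabla_{X_i}X_j = \tfrac12(\lambda_i^* - \lambda_j^* + \lambda_k^*)X_k$-type expressions (with $\{i,j,k\}$ a permutation of $\{1,2,3\}$), and one must track which $\lambda^*$ carries which sign so that the symmetrization in $\mathcal{L}_X g$ cancels the "$+\lambda_k^*$" pieces and leaves exactly the stated differences. There is no conceptual difficulty — once the connection coefficients are in hand the result is a few lines of substitution — so I would present the connection coefficients as a short computation (or cite Milnor for them directly, since \cite{Milnor} gives them), then substitute $X = a_1X_1+a_2X_2+a_3X_3$ and read off the four formulas.
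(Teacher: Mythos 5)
Your computation is correct and lands on the same formulas, but you take a slightly longer route than the paper. You expand $\mathcal{L}_Xg(X_i,X_j)=\sum_k a_k\big(g(\nabla_{X_i}X_k,X_j)+g(\nabla_{X_j}X_k,X_i)\big)$ and then evaluate the individual connection coefficients via the Koszul formula (e.g. $g(\nabla_{X_1}X_3,X_2)=\tfrac12(\lambda_1^*-\lambda_2^*-\lambda_3^*)$, $g(\nabla_{X_2}X_3,X_1)=\tfrac12(\lambda_1^*-\lambda_2^*+\lambda_3^*)$, whose sum gives $\lambda_1^*-\lambda_2^*$), which works but forces exactly the sign bookkeeping you flag as delicate. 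The paper never computes $\nabla$ at all: it substitutes $\nabla_{X_i}X_k=\nabla_{X_k}X_i+[X_i,X_k]$ (torsion-freeness) and observes that the symmetric leftover $g(\nabla_{X_k}X_i,X_j)+g(\nabla_{X_k}X_j,X_i)=D_{X_k}g(X_i,X_j)$ vanishes because $g(X_i,X_j)$ is constant for left-invariant data, so the whole expression collapses immediately to $\sum_k a_k\big(g([X_i,X_k],X_j)+g([X_j,X_k],X_i)\big)$ and the bracket table of Lemma \ref{lemma:Milnor} finishes it. The paper's manipulation is cleaner and less error-prone; your version buys you the explicit Christoffel symbols, which are not needed elsewhere. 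One small caution: your side remark that $\nabla_{X_i}X_j+\nabla_{X_j}X_i=0$ for $i\neq j$ is not right in general (it equals $2\nabla_{X_i}X_j-[X_i,X_j]$, e.g. $(-\lambda_1^*+\lambda_2^*)X_3$ for $(i,j)=(1,2)$), and in any case the quantity actually appearing in $\mathcal{L}_Xg(X_1,X_2)$ is $g(\nabla_{X_1}X_3,X_2)+g(\nabla_{X_2}X_3,X_1)$, not that symmetrization; since this was only offered as a sanity check it does not affect the validity of the main computation.
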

\vspace{1em}
\begin{proof}

We have the following computation for $\mathcal{L}_Xg$:
\vspace{1em}

$\mathcal{L}_Xg(X_i,X_j)$
\begin{equation*}
\begin{aligned}
&=g(\nabla_{X_i}(a_1X_1+a_2X_2+a_3X_3),X_j)+g(\nabla_{X_j}(a_1X_1+a_2X_2+a_3X_3),X_i)\\
&=\displaystyle\sum_k a_kg(\nabla_{X_i}X_k,X_j)+a_kg(\nabla_{X_j}X_k,X_i)\\
&=\displaystyle\sum_k g(\nabla_{X_k}X_i+[X_i,X_k],X_j)+g(\nabla_{X_k}X_j+[X_j,X_k],X_i)\\
&=\sum_k a_kg([X_i,X_k],X_j)+a_kg([X_j,X_k],X_i)+D{X_k}g(X_i,X_j)\\
&=\sum_k a_kg([X_i,X_k],X_j)+a_kg([X_j,X_k],X_i).
\end{aligned}
\end{equation*}
\vspace{1em}

Then, using Lemma \ref{lemma:Milnor}, we get:

\begin{align*}
&\mathcal{L}_Xg(X_i,X_i)=0\text{ for all }i\\[0.5ex]  
&\mathcal{L}_Xg(X_1,X_2)=-a_3\lambda_2^*+a_3\lambda_1^*\\[0.5ex]
&\mathcal{L}_Xg(X_1,X_3)=-a_2\lambda_1^*+a_2\lambda_3^*\\[0.5ex]
&\mathcal{L}_Xg(X_2,X_3)=-a_1\lambda_3^*+a_1\lambda_2^*
\end{align*}

%
%
%
\end{proof}
\vspace{1em}

Finally, we recall the definition of the Ricci quadratic form, $r(x)$, as introduced by Milnor in \cite{Milnor}, and the signatures of the Ricci forms of $Nil$, $E(1,1)$, $\widetilde{SL_2(\mathbb{R})}$, $E(2)$, $\mathbb{R}^3$, and $SU(2)$ when the metric is left invariant. 

\begin{defn}
The Ricci quadratic form, $r(X)$ takes vectors $X\in TM$ to $\mathbb{R}$ and is defined as follows:
 $$g(r(X),Y)=\ric(X,Y)$$ for all $Y\in TM$.
\vspace{1em}

The collection of signs of $r(e_i)$, namely, $\{sign(r(e_i))\}_{i=1}^n$, is called the signature of the quadratic form $r$, where $\{e_i\}_{i=1}^n$ is any orthonormal basis for the tangent space.
\end{defn}

\begin{table}[ht]
\bgroup
\def\arraystretch{1.5}
\begin{center}
\begin{tabular}{ l|c|c|c|r }
\text{Lie Group} & $r(e_1)$ & $r(e_2)$ & $r(e_3)$ & \text{Reference}\\
\hline
$Nil$ & $r(e_1)>0$ & $r(e_2)<0$ & $r(e_3)<0$ & \text{\cite[Corollary~4.6]{Milnor}}\\
$E(1,1)$, $\widetilde{SL_2(\mathbb{R})}$ & $r(e_1)>0$ & $r(e_2)<0$ & $r(e_3)<0$ & \\
& $r(e_1)=0$ & $r(e_2)=0$ & $r(e_3)<0$ & \text{ \cite[Corollary~4.7]{Milnor}}\\
$E(2)$ & $r(e_1)>0$ & $r(e_2)<0$ & $r(e_3)<0$ & \text{\cite[Corollary~4.8]{Milnor}}\\
$\mathbb{R}^3$ & $r(e_1)=0$ & $r(e_2)=0$ & $r(e_3)<0$ & \\
$SU(2)$ & $r(e_1)>0$ & $r(e_2)>0$ & $r(e_3)>0$ & \\
& $r(e_1)>0$ & $r(e_2)=0$ & $r(e_3)=0$ & \\
& $r(e_1)>0$ & $r(e_2)<0$ & $r(e_3)<0$ & \text{\cite[Corollary~4.5]{Milnor}}
\end{tabular}
\end{center}
\egroup
\caption{}
\label{table: ricci signature}
\end{table}
\vspace{1em}

\section[]{$m$-Quasi Einstein Solutions for $Nil$, $\widetilde{SL_2\mathbb{R}}$, $E(1,1)$, $E(2)$ and $H^2\times\mathbb{R}$}\label{section: lie groups}

In this section, we will compute solutions to the $m$-quasi Einstein equation for the Lie groups $Nil$, $\widetilde{SL_2(\mathbb{R})}$, $E(1,1)$, and $E(2)$. We will also compute solutions to $H^2\times \mathbb{R}$, using the Lie group structure of $H^2$. 
\vspace{1em}
%

We will use Tables \ref{table:lie bracket} and \ref{table: ricci signature} as well as the next remark
to find examples of $X$ which gives us $\ric_X^m=Ag$ for $m>0$ and $A<0$ for the space $Nil$.
\vspace{1em}

\begin{remark}\label{remark:Nil}
By \cite[Corollary~4.5]{Milnor}, for any left invariant metric on $Nil$, the principal Ricci curvatures satisfy $|r(e_1)|=|r(e_2)|=|r(e_3)|=|\rho|$. 
\end{remark}

\begin{prop}\label{prop: nil A<0 m>0}
Consider $Nil$ with $\ric_X^m=Ag$. If $g$ is a left-invariant metric and if $X$ is a left-invariant vector field, then there exist examples of $X$ such that $\ric_X^m=Ag$ if and only if $A<0$ and $m>0$.
\end{prop}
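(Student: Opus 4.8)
The plan is to use the structural results already established in Section \ref{section: Unimodular Lie Groups}, together with Milnor's computations in Lemma \ref{lemma:Milnor}, to reduce the problem to a finite algebraic computation. Since $Nil$ is a unimodular Lie group, Corollary \ref{cor:Unimodular leftinvariant killingfield} tells us that any left-invariant $X$ with $\ric_X^m = Ag$ must be Killing, and Lemma \ref{lemma: aiaj=0} then forces $X = a_m X_m$ to be a multiple of a single eigenvector of the Ricci tensor in the Milnor frame. So I would first fix the Milnor orthonormal eigenbasis $\{X_1, X_2, X_3\}$ for $Nil$ with $[X_2,X_3] = \lambda_1^* X_1$, $\lambda_2^* = \lambda_3^* = 0$ (Table \ref{table:lie bracket}), and write $X = a_k X_k$ for a fixed $k \in \{1,2,3\}$.

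Next I would plug this $X$ into the equation $\ric_X^m = \ric + \tfrac12 \Lie_X g - \tfrac1m X^* \otimes X^* = Ag$, computing each piece in the Milnor frame. The diagonal entries of $\Lie_X g$ vanish by Proposition \ref{prop:lie derivative computation}, and since $\lambda_2^* = \lambda_3^* = 0$, that proposition shows the off-diagonal entries $\Lie_X g(X_1,X_2)$, $\Lie_X g(X_1, X_3)$, $\Lie_X g(X_2,X_3)$ are each a multiple of either $a_2\lambda_1^*$ or $a_1\lambda_1^*$; checking the three cases $k=1,2,3$ shows the relevant ones vanish (when $X = a_1 X_1$, the only nonzero Lie-derivative term would be $\Lie_X g(X_2,X_3) = -a_1\lambda_3^* + a_1\lambda_2^* = 0$; when $X = a_2X_2$ or $a_3X_3$ the surviving terms also vanish since they involve $\lambda_2^*$ or $\lambda_3^*$). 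The term $\tfrac1m X^*\otimes X^*$ contributes only the single diagonal entry $-\tfrac1m a_k^2$ in the $(X_k, X_k)$ slot. So the equation becomes: $\ric$ is diagonal in this frame (which is Milnor's Corollary~4.6, as recorded in Table \ref{table: ricci signature}) with $\ric(X_j,X_j) = r(e_j)$, and we need $r(e_k) - \tfrac1m a_k^2 = A$ while $r(e_j) = A$ for $j \neq k$. By Remark \ref{remark:Nil}, all $|r(e_j)| = |\rho|$ with signature $(+,-,-)$, so the two equal off-diagonal values $r(e_j) = A$ force $A = -|\rho| < 0$ and force $k$ to be the index with $r(e_k) > 0$, i.e. $k=1$. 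Then $r(e_1) - \tfrac1m a_1^2 = A$ reads $|\rho| - \tfrac1m a_1^2 = -|\rho|$, i.e. $a_1^2 = 2m|\rho|$, which has a nonzero real solution precisely when $m > 0$. This gives the "if" direction (exhibit this $X$) and the "only if" direction (any solution must have this form, hence $A < 0$, $m>0$).

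The main obstacle — though it is more bookkeeping than conceptual — is the case analysis verifying that for $X = a_2 X_2$ or $X = a_3 X_3$ no solution exists, and that scaling the metric does not produce extra freedom: one must check that the signs in Table \ref{table: ricci signature} are genuinely forced (that $|r(e_1)|=|r(e_2)|=|r(e_3)|$ for every left-invariant metric on $Nil$, not just one normalization), which is exactly the content of Remark \ref{remark:Nil}. I would also need to double-check the sign conventions in Proposition \ref{prop:lie derivative computation} against those in Milnor's bracket relations so that the vanishing of the off-diagonal Lie-derivative terms is genuine and not an artifact of a sign error. Once those are in place, the proposition follows immediately from the displayed scalar equation $a_1^2 = 2m|\rho|$.
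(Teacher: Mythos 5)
Your proposal is correct and follows essentially the same route as the paper: use Corollary \ref{cor:Unimodular leftinvariant killingfield} to force $X$ Killing, work in the Milnor frame with $\lambda_2^*=\lambda_3^*=0$, and reduce to the scalar equation $\rho-\tfrac{1}{m}a_1^2=-\rho$, i.e.\ $a_1^2=2m\rho$, which has a nonzero solution exactly when $m>0$ and then $A=-\rho<0$. One small correction: your claim that the off-diagonal Lie-derivative entries vanish when $X=a_2X_2$ or $X=a_3X_3$ is false --- by Proposition \ref{prop:lie derivative computation} one has $\mathcal{L}_{a_2X_2}g(X_1,X_3)=-a_2\lambda_1^*$ and $\mathcal{L}_{a_3X_3}g(X_1,X_2)=a_3\lambda_1^*$ with $\lambda_1^*>0$, and the paper uses precisely this non-vanishing (rather than Lemma \ref{lemma: aiaj=0}) to conclude $a_2=a_3=0$ at the outset; your argument still closes because the diagonal signature $(+,-,-)$ independently rules out those cases, so the slip does not affect the conclusion.
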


\begin{proof}
Let $\{X_1,X_2,X_3\}$ be an orthonormal basis where $\ric(X_1,X_1)=\rho$, $\ric(X_2,X_2)=-\rho$, and $\ric(X_3,X_3)=-\rho$ as in Table \ref{table: ricci signature} and Remark \ref{remark:Nil}.
Let $X=a_1X_1+a_2X_2+a_3X_3$ where $a_1$, $a_2$, and $a_3$ are all constants. By Corollary \ref{cor:Unimodular leftinvariant killingfield}, $X$ is a Killing field so we set $\mathcal{L}_Xg(X_i,X_j)=0$ for all $i,j=1,2,3$ as follows: 

\begin{align*}
   &\mathcal{L}_Xg(X_1,X_2)=a_3\lambda_1=0\\[0.5ex]
   &\mathcal{L}_Xg(X_1,X_3)=-a_2\lambda_1=0 
\end{align*}
\vspace{1em}

%

where every other combination of $\mathcal{L}_Xg(X_i,X_j)$ is zero by definition of $Nil$. Thus, $a_2=a_3=0$. We compute $\ric_X^m$ as follows:
\vspace{1em}

\begin{align*}
\ric_X^m(X_1,X_1)&=\rho-\frac{1}{m}a_1^2\\
\ric_X^m(X_2,X_2)&=-\rho-\frac{1}{m}a_2^2=-\rho\\
\ric_X^m(X_3,X_3)&=-\rho-\frac{1}{m}a_3^2=-\rho
\end{align*}
\vspace{1em}
%
%
%
%
%
%

Thus, $\ric_X^m=Ag$ if and only if $X=\pm\sqrt{2m\rho}X_1$. In this case, $m>0$ and $A=-\rho<0$.

\end{proof}
\vspace{1em}

Now, we will find examples of $X$ which satisfy $\ric_X^m=Ag$ for the spaces $E(1,1)$ and $\widetilde{SL_2(\mathbb{R})}$. 
\vspace{1em}
%

\begin{prop}
Consider $\widetilde{SL_2(\mathbb{R})}$. If $g$ is a left-invariant metric and if $X$ is a left-invariant vector field, then there exist examples of $\ric_X^m=Ag$ if and only if $m<0$ and $A=0$.
\end{prop}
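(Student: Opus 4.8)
The plan is to reuse the template from Proposition~\ref{prop: nil A<0 m>0}, now with the structure constants of $\widetilde{SL_2(\mathbb{R})}$ recorded in Table~\ref{table:lie bracket}, namely $\lambda_1^*>0$, $\lambda_2^*>0$, $\lambda_3^*<0$. Since $\widetilde{SL_2(\mathbb{R})}$ is unimodular, Corollary~\ref{cor:Unimodular leftinvariant killingfield} guarantees that any left-invariant $X$ with $\ric_X^m=Ag$ is Killing, so I may drop the $\tfrac12\mathcal{L}_X g$ term and analyze the reduced equation $\ric-\tfrac1m X^*\otimes X^*=Ag$ directly.

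First I would expand $X=a_1X_1+a_2X_2+a_3X_3$ in the Milnor eigenbasis of Lemma~\ref{lemma:Milnor} and impose that $X$ be Killing through Proposition~\ref{prop:lie derivative computation}. The three off-diagonal conditions are $a_3(\lambda_1^*-\lambda_2^*)=0$, $a_2(\lambda_3^*-\lambda_1^*)=0$, and $a_1(\lambda_2^*-\lambda_3^*)=0$. Because $\lambda_1^*,\lambda_2^*>0>\lambda_3^*$, the last two equations have nonzero coefficients and force $a_1=a_2=0$, while the first leaves two possibilities: the trivial $a_3=0$, or $\lambda_1^*=\lambda_2^*$ with $a_3$ free. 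Hence a nontrivial Killing field can only point along $X_3$, and only in the one-parameter family of left-invariant metrics for which the two positive structure constants agree; this is consistent with Lemma~\ref{lemma: aiaj=0}, which says $X$ must be a single Ricci eigenvector.

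With $X=a_3X_3$ and $\lambda_1^*=\lambda_2^*$, the off-diagonal entries of $\ric_X^m$ vanish automatically, leaving the three diagonal equations $\ric(X_1,X_1)=A$, $\ric(X_2,X_2)=A$, and $\ric(X_3,X_3)-\tfrac1m a_3^2=A$. The symmetry $\lambda_1^*=\lambda_2^*$ makes the first two identical and fixes $A$ as the common transverse Ricci curvature, so the problem collapses to the single scalar relation $\ric(X_3,X_3)-A=\tfrac1m a_3^2$. Reading the principal Ricci curvatures of $\widetilde{SL_2(\mathbb{R})}$ off Table~\ref{table: ricci signature} in this constrained metric then determines both $A$ (from the transverse value) and, via the constraint $a_3^2>0$, the sign of $m$ (from the sign of $\ric(X_3,X_3)-A$). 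I would then reverse the computation to exhibit the explicit $a_3$ and metric realizing the admissible case, which the statement records as $A=0$ and $m<0$.

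The step I expect to be the main obstacle is exactly this sign reconciliation: I must pin down the three principal Ricci signs in the metric with $\lambda_1^*=\lambda_2^*$ and $\lambda_3^*<0$ precisely enough to confirm that the transverse value forces $A=0$ and that the aligned equation is solvable only for $m<0$, thereby excluding every competing pair $(m,A)$. The companion "only if" direction is lighter: it amounts to re-reading the off-diagonal Killing conditions above to confirm that the bracket signs genuinely bar $X$ from the $X_1$- and $X_2$-directions, so that no other left-invariant configuration can produce a solution.
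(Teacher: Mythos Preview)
Your proposal is correct and follows essentially the same route as the paper: use Corollary~\ref{cor:Unimodular leftinvariant killingfield} to force $X$ Killing, read off from Proposition~\ref{prop:lie derivative computation} that $a_1=a_2=0$ and either $a_3=0$ or $\lambda_1^*=\lambda_2^*$, and then match the diagonal Ricci eigenvalues against Table~\ref{table: ricci signature}. The only organizational difference is that the paper case-splits directly on the two admissible Ricci signatures $(+,-,-)$ and $(0,0,-)$ and eliminates the first by noting $\ric_X^m(X_1,X_1)>0>\ric_X^m(X_2,X_2)$, whereas you first impose $\lambda_1^*=\lambda_2^*$ and use the resulting symmetry $\ric(X_1,X_1)=\ric(X_2,X_2)$ to force the signature to be $(0,0,-)$; both reach $A=0$, $a_3^2=-m\rho_3$, hence $m<0$. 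One small point to make explicit in your write-up: when $a_3=0$ (so $X=0$), you still need to observe that neither signature in Table~\ref{table: ricci signature} is Einstein, so there is no trivial solution either.
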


\begin{proof}
Let $g$ is a left-invariant metric and let $X$ be a left-invariant vector field, where $X=a_1X_1+a_2X_2+a_3X_3$ with $\{X_1,X_2,X_3\}$ an orthonormal basis. By Corollary \ref{cor:Unimodular leftinvariant killingfield}, $X$ must be a Killing field if $\ric_X^m=Ag$, so we set $\mathcal{L}_Xg(X_i,X_j)=0$ for all $i,j=1,2,3$ as follows:

\begin{align*}
&\mathcal{L}_Xg(X_1,X_2)=a_3(\lambda_1-\lambda_2)=0\\[0.5ex]
&\mathcal{L}_Xg(X_1,X_3)=a_2(-\lambda_1-\lambda_3)=0\\[0.5ex]
&\mathcal{L}_Xg(X_2,X_3)=a_1(\lambda_2+\lambda_3)=0
\end{align*}
\vspace{1em}
%
%
%

where all other pairs of $\mathcal{L}_Xg(X_i,X_j)=0$ by properties of $\widetilde{SL_2(\mathbb{R})}$. By the above, we must have $a_1=a_2=0$ and either $a_3=0$ or $\lambda_1=\lambda_2$.
\vspace{1em}

By Table \ref{table: ricci signature}
, the signature for the Ricci form is $(+,-,-)$ or $(0,0,-)$. 
\vspace{1em}

If the Ricci form is $(+,-,-)$, let $|\ric(X_i,X_i)|=\rho_i$. Then, plugging in $(X_i,X_j)$, where $i,j=1,2,3$ into $\ric_X^m=Ag$, we get the following set of equations:
\vspace{1em}

\begin{align*}
\ric_X^m(X_1,X_1)&=\rho_1-\frac{1}{m}a_1^2=\rho_1\\
\ric_X^m(X_2,X_2)&=-\rho_2-\frac{1}{m}a_2^2=-\rho_2\\
\ric_X^m(X_3,X_3)&=-\rho_3-\frac{1}{m}a_3^2
\end{align*}
\vspace{1em}
%
%
%
%
%
%

In this case, we cannot have $\ric_X^m=Ag$ since $\ric_X^m(X_1,X_1)>0$ and $\ric_X^m(X_2,X_2)<0$.
\vspace{1em}

If the Ricci form is $(0,0,-)$, then we get the following set of equations:
\vspace{1em}

\begin{align*}
\ric_X^m(X_1,X_1)&=-\frac{1}{m}a_1^2=0\\
\ric_X^m(X_2,X_2)&=-\frac{1}{m}a_2^2=0\\
\ric_X^m(X_3,X_3)&=-\rho_3-\frac{1}{m}a_3^2
\end{align*}
\vspace{1em}
%
%
%
%
%
%

Then,  $\ric_X^m=Ag$ if and only if $a_3=\sqrt{-m\rho_3}$, $A=0$, and $m<0$.
\end{proof}
\vspace{1em}

\begin{prop}
Consider $E(1,1)$. If $g$ is a left-invariant metric and if $X$ is a left-invariant vector field, then there are no solutions to $\ric_X^m=Ag$.
\end{prop}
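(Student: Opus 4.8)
The plan is to follow exactly the template already established for $Nil$ and $\widetilde{SL_2(\mathbb{R})}$: fix an orthonormal eigenbasis $\{X_1,X_2,X_3\}$ for the Ricci tensor coming from Lemma \ref{lemma:Milnor}, write $X=a_1X_1+a_2X_2+a_3X_3$ with constant $a_i$, and exploit that by Corollary \ref{cor:Unimodular leftinvariant killingfield} the vector field $X$ must be Killing, so $\mathcal{L}_Xg(X_i,X_j)=0$ for all $i,j$. For $E(1,1)$ the structure constants from Table \ref{table:lie bracket} are $\lambda_1^*>0$, $\lambda_2^*<0$, $\lambda_3^*=0$. Plugging these into Proposition \ref{prop:lie derivative computation} gives the three off-diagonal Killing equations, which read (up to signs and relabeling) $a_3(\lambda_1^*-\lambda_2^*)=0$, $a_2(\lambda_3^*-\lambda_1^*)=0$, and $a_1(\lambda_2^*-\lambda_3^*)=0$. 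Since $\lambda_1^*>0$ and $\lambda_2^*<0$, the combinations $\lambda_1^*-\lambda_2^*$, $\lambda_3^*-\lambda_1^*=-\lambda_1^*$, and $\lambda_2^*-\lambda_3^*=\lambda_2^*$ are all nonzero, forcing $a_1=a_2=a_3=0$, i.e. $X=0$.

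Next I would plug $X=0$ into the $m$-quasi Einstein equation: it collapses to $\ric=Ag$, i.e. $E(1,1)$ with a left-invariant metric would have to be Einstein. By Table \ref{table: ricci signature} the Ricci signature of any left-invariant metric on $E(1,1)$ is $(+,-,-)$, so the three principal Ricci curvatures cannot all be equal; in particular $\ric(X_1,X_1)>0$ while $\ric(X_2,X_2)<0$, so $\ric=Ag$ is impossible for any constant $A$. Hence there are no solutions to $\ric_X^m=Ag$ on $E(1,1)$, even the trivial one.

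I don't anticipate a genuine obstacle here — this is the easiest of the unimodular cases precisely because the three structure constants $\lambda_i^*$ are pairwise distinct (one positive, one negative, one zero), so the Killing condition immediately kills all three components of $X$, and then the Einstein obstruction from the Ricci signature finishes the job. The only point requiring a little care is bookkeeping: making sure the sign pattern in Table \ref{table:lie bracket} is matched correctly against the formulas in Proposition \ref{prop:lie derivative computation} (which are written in terms of the signed $\lambda_i^*$, not the absolute values $\lambda_i$), so that one correctly concludes each coefficient of $a_i$ is nonzero. One should also note explicitly that the conclusion $X=0$ together with nonexistence of an Einstein metric means the proposition asserts no solutions at all, not merely no nontrivial ones.
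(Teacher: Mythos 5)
Your proof is correct and follows essentially the same route as the paper: the Killing condition from Corollary \ref{cor:Unimodular leftinvariant killingfield} together with the structure constants of $E(1,1)$ forces $a_1=a_2=a_3=0$, and the non-Einstein Ricci signature then rules out even the trivial solution. The one small slip is that Table \ref{table: ricci signature} records \emph{two} possible signatures for $E(1,1)$, namely $(+,-,-)$ and $(0,0,-)$, not just $(+,-,-)$; your stated reason --- that the principal Ricci curvatures are never all equal --- disposes of both cases, so the argument is unaffected, but you should check both signatures explicitly as the paper does.
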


\begin{proof}
Let $g$ is a left-invariant metric and let $X$ be a left-invariant vector field, where $X=a_1X_1+a_2X_2+a_3X_3$ with $\{X_1,X_2,X_3\}$ an orthonormal basis. By Corollary \ref{cor:Unimodular leftinvariant killingfield}, $X$ must be a Killing field if $\ric_X^m=Ag$, so we set $\mathcal{L}_Xg(X_i,X_j)=0$ for all $i,j=1,2,3$ as follows:

\begin{align*}
&\mathcal{L}_Xg(X_1,X_2)=a_3(\lambda_2+\lambda_1)=0\\[0.5ex]
&\mathcal{L}_Xg(X_1,X_3)=-a_1\lambda_2=0\\[0.5ex]
&\mathcal{L}_Xg(X_2,X_3)=-a_2\lambda_1=0
\end{align*}
\vspace{1em}
%
%
%

All other $\mathcal{L}_Xg(X_i,X_j)=0$ by properties of $E(1,1)$. By the three equations above, $a_1=a_2=a_3=0$. By Table \ref{table: ricci signature}
, the signature for the Ricci form is $(+,-,-)$ or $(0,0,-)$. If the Ricci form is $(+,-,-)$, let $|\ric(X_i,X_i)|=\rho_i$. Then, plugging in all iterations of $(X_i,X_j)$, $i,j=1,2,3$, we get the following:
\vspace{1em}

\begin{align*}
\ric_X^m(X_1,X_1)&=\rho_1-\frac{1}{m}a_1^2=\rho_1\\
\ric_X^m(X_2,X_2)&=-\rho_2-\frac{1}{m}a_2^2=-\rho_2\\
\ric_X^m(X_3,X_3)&=-\rho_3-\frac{1}{m}a_3^2=-\rho_3
\end{align*}
\vspace{1em}
%
%
%
%
%
%

$\ric_X^m$ cannot equal $Ag$ since $\ric_X^m(X_1,X_1)>0$ and $\ric_X^m(X_2,X_2)<0$.
\vspace{1em}

If the Ricci form is $(0,0,-)$, then we get the following set of equations:
\vspace{1em}

\begin{align*}
\ric_X^m(X_1,X_1)&=-\frac{1}{m}a_1^2=0\\
\ric_X^m(X_2,X_2)&=-\frac{1}{m}a_2^2=0\\
\ric_X^m(X_3,X_3)&=-\rho_3-\frac{1}{m}a_3^2
\end{align*}
\vspace{1em}
%
%
%
%
%
%

In this case, we cannot have $\ric_X^m=Ag$ since $\ric_X^m(X_1,X_1)=\ric_X^m(X_2,X_2)=0$ and $\ric_X^m(X_3,X_3)<0$.
\end{proof}
\vspace{1em}

Finally, we will find that there are no examples of $X$ on $E(2)$ which give us $\ric_X^m=Ag$. 
\vspace{1em}
%

\begin{prop}
Consider $E(2)$. If $g$ is a left-invariant metric and if $X$ is a left-invariant vector field, then there are no solutions to $\ric_X^m=Ag$.
\end{prop}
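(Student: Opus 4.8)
The plan is to follow exactly the template used above for $Nil$, $\widetilde{SL_2(\mathbb{R})}$, and $E(1,1)$. Since $E(2)$ is a unimodular Lie group, Corollary \ref{cor:Unimodular leftinvariant killingfield} tells us that any left-invariant $X$ solving $\ric_X^m=Ag$ is automatically Killing. So I would begin by writing $X=a_1X_1+a_2X_2+a_3X_3$ in the Milnor orthonormal eigenframe of Lemma \ref{lemma:Milnor} and imposing $\mathcal{L}_Xg(X_i,X_j)=0$ for all $i,j$ via Proposition \ref{prop:lie derivative computation}. The structure constants for $E(2)$ from Table \ref{table:lie bracket} are $\lambda_1,\lambda_2>0$ and $\lambda_3=0$.

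With those signs, Proposition \ref{prop:lie derivative computation} gives $\mathcal{L}_Xg(X_1,X_2)=a_3(\lambda_1-\lambda_2)$, $\mathcal{L}_Xg(X_1,X_3)=-a_2\lambda_1$, and $\mathcal{L}_Xg(X_2,X_3)=a_1\lambda_2$, with the diagonal components zero. The equations $-a_2\lambda_1=0$ and $a_1\lambda_2=0$ force $a_1=a_2=0$ because $\lambda_1,\lambda_2>0$, leaving $X=a_3X_3$ (with $a_3$ unconstrained when $\lambda_1=\lambda_2$ and $a_3=0$ otherwise). Then I would invoke Table \ref{table: ricci signature}: for a non-flat left-invariant metric on $E(2)$ the Ricci signature is $(+,-,-)$, so writing $|\ric(X_i,X_i)|=\rho_i$ we get $\ric_X^m(X_1,X_1)=\rho_1-\frac{1}{m}a_1^2=\rho_1>0$ while $\ric_X^m(X_2,X_2)=-\rho_2-\frac{1}{m}a_2^2=-\rho_2<0$. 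These have opposite signs, so no constant $A$ can satisfy $\ric_X^m=Ag$, which is the claim. (In particular this also shows $E(2)$ is never Einstein, so even $X=0$ fails.)

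The computation is entirely routine; the only point that deserves care is the flat left-invariant metric on $E(2)$, i.e.\ the case $\lambda_1=\lambda_2$, which is the metric not recorded in Table \ref{table: ricci signature} because as a geometry it agrees with $\mathbb{R}^3$ and is handled there. For completeness I would add the one-line remark that in this flat case $\ric_X^m(X_i,X_i)=-\frac{1}{m}a_i^2$, and $a_1=a_2=0$ together with $\ric_X^m(X_3,X_3)=-\frac{1}{m}a_3^2=0$ forces $a_3=0$; hence the only possibility is the trivial solution $X=0$ with $A=0$, and there is still no nontrivial solution. So the ``main obstacle'' is not really an obstacle at all — it is simply making sure the flat metric is either excluded as $\mathbb{R}^3$-geometry or dispatched by this short argument.
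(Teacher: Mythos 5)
Your proposal follows the paper's proof essentially verbatim: invoke Corollary \ref{cor:Unimodular leftinvariant killingfield} to make $X$ Killing, use Proposition \ref{prop:lie derivative computation} with the $E(2)$ structure constants to force $a_1=a_2=0$, and then derive the contradiction from the $(+,-,-)$ Ricci signature in Table \ref{table: ricci signature}. Your extra remark dispatching the flat case $\lambda_1=\lambda_2$ (which Table \ref{table: ricci signature} omits for $E(2)$ and the paper's proof passes over silently) is correct and a worthwhile small addition, but it does not change the argument.
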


\begin{proof}
Let $g$ is a left-invariant metric and let $X$ be a left-invariant vector field, where $X=a_1X_1+a_2X_2+a_3X_3$ with $\{X_1,X_2,X_3\}$ an orthonormal basis. By Corollary \ref{cor:Unimodular leftinvariant killingfield}, $X$ must be a Killing field if $\ric_X^m=Ag$, so we set $\mathcal{L}_Xg(X_i,X_j)=0$ for all $i,j=1,2,3$ as follows:

\begin{align*}
&\mathcal{L}_Xg(X_1,X_2)=a_3(\lambda_1-\lambda_2)=0\\[0.5ex]
&\mathcal{L}_Xg(X_1,X_3)=-a_2\lambda_1=0\\[0.5ex]
&\mathcal{L}_Xg(X_2,X_3)=a_1\lambda_2=0
\end{align*}
\vspace{1em}
%
%

All other $\mathcal{L}_Xg(X_i,X_j)=0$ by properties of $E(2)$. By the three equations above, $a_1=a_2=0$ and either $\lambda_1=\lambda_2$ or $a_3=0$. By Table \ref{table: ricci signature}
, the signature for the Ricci form is $(+,-,-)$. Letting $|\ric(X_i,X_i)|=\rho_i$, we plug in all iterations of $(X_i,X_j)$, $i,j=1,2,3$ as follows:
\vspace{1em}

\begin{align*}
\ric_X^m(X_1,X_1)&=\rho_1-\frac{1}{m}a_1^2=\rho_1\\
\ric_X^m(X_2,X_2)&=-\rho_2-\frac{1}{m}a_2^2\\
\ric_X^m(X_3,X_3)&=-\rho_3-\frac{1}{m}a_3^2
\end{align*}
\vspace{1em}
%
%
%
%
%
%

$\ric_X^m$ cannot equal $Ag$ since $\ric_X^m(X_1,X_1)>0$ and $\ric_X^m(X_2,X_2)<0$.
\vspace{1em}
\end{proof}

\begin{prop}\label{prop:r3}
Consider $\mathbb{R}^3$. If $g$ is a left-invariant metric and if $X$ is a left-invariant vector field, then the only solutions of $\ric_X^m=Ag$ occur when $m\neq 0$, $A=0$, and $X=0$.
\end{prop}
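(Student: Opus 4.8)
The plan is to exploit that the abelian Lie group $\mathbb{R}^3$ carries a flat left-invariant metric, so that the Ricci term drops out of the $m$-quasi Einstein equation altogether. First I would record that every Lie bracket on $\mathbb{R}^3$ vanishes, so in the notation of Lemma \ref{lemma:Milnor} all of $\lambda_1^*,\lambda_2^*,\lambda_3^*$ are zero; consequently the left-invariant metric is flat and $\ric\equiv 0$. Next, for a left-invariant vector field $X=a_1X_1+a_2X_2+a_3X_3$ in the orthonormal basis $\{X_1,X_2,X_3\}$, Proposition \ref{prop:lie derivative computation} gives $\mathcal{L}_Xg(X_i,X_j)=0$ for all $i,j$ (each entry is a combination of the vanishing $\lambda_i^*$), so $X$ is automatically Killing and $\ric_X^m=-\tfrac{1}{m}X^*\otimes X^*$; this also follows directly from Corollary \ref{cor:Unimodular leftinvariant killingfield}.

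With the Ricci term gone, the equation $\ric_X^m=Ag$ becomes the system $-\tfrac{1}{m}a_ia_j=A\delta_{ij}$ for $1\leq i,j\leq 3$. The off-diagonal equations force $a_ia_j=0$ whenever $i\neq j$, so at most one component of $X$ is nonzero; after relabeling, $X=a_1X_1$. Evaluating the diagonal equation at $i=2$ then gives $A=0$, and feeding this back into the $i=1$ equation gives $-\tfrac{1}{m}a_1^2=0$, hence $a_1=0$ since $m\neq 0$. Thus $X=0$ and $A=0$. Conversely, $X=0$, $A=0$ is clearly a solution for every $m\neq 0$ because $\mathbb{R}^3$ with a left-invariant metric is Ricci flat.

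There is no genuine obstacle here: once one observes that flatness annihilates the Ricci term and that left-invariant fields on an abelian group are Killing, the statement reduces to the elementary linear algebra above. The only point worth stating carefully is that $m\neq 0$ is exactly what is needed to pass from $a_1^2/m=0$ to $a_1=0$ — and of course $m\neq 0$ is built into the very definition of $\ric_X^m$.
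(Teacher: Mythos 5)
Your proposal is correct and follows essentially the same route as the paper: both reduce to the flatness of the left-invariant metric and the automatic Killing property of left-invariant fields on the abelian group (the paper cites Milnor and Corollary \ref{cor:Unimodular leftinvariant killingfield} for these facts, which you rederive from the vanishing brackets), after which the equation collapses to $-\tfrac{1}{m}a_ia_j=A\delta_{ij}$. Your explicit linear-algebra step forcing $A=0$ and then $X=0$ is exactly the omitted final computation in the paper's proof.
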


\begin{proof}
Let $g$ is a left-invariant metric and let $X$ be a left-invariant vector field, where $X=a_1X_1+a_2X_2+a_3X_3$ with $\{X_1,X_2,X_3\}$ an orthonormal basis of left-invariant vector fields. By Corollary \ref{cor:Unimodular leftinvariant killingfield},
$X$ must be a Killing field if $\ric_X^m=Ag$. By \cite[page~307]{Milnor}, $\mathcal{L}_Xg(X_i,X_j)=0$ for all $i,j=1,2,3$ and $\ric(X_i,X_j)=0$ for all $i,j=1,2,3$, so we have the following sets of equations for $\ric_X^m(X_i,X_j)$.
\vspace{1em}

\begin{align*}
\ric_X^m(X_1,X_1)&=-\frac{1}{m}a_1^2\\
\ric_X^m(X_2,X_2)&=-\frac{1}{m}a_2^2\\
\ric_X^m(X_3,X_3)&=-\frac{1}{m}a_3^2
\end{align*}
\vspace{1em}
%
%
%
%
%
%

Setting $\ric_X^m=Ag$, the only solutions are when $m\neq 0$, $A=0$, and $X=0$.
\end{proof}
\vspace{1em}

\begin{remark} 
Since $\mathbb{R}^3$ is Ricci flat, Proposition \ref{prop:r3} also follows from Proposition \ref{prop: lambda_m<0 global condition}.
\end{remark}

\begin{prop}
If $g$ is a left-invariant metric on $H^2\times \mathbb{R}$ and if $X$ is a left-invariant vector field then there exist solutions to $\ric_X^m=Ag$ if and only if $A<0$ and $m>0$.
\end{prop}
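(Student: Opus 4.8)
The plan is to exploit the fact that $H^2$ is the two-dimensional non-abelian (affine) Lie group, so that the geometry $H^2\times\mathbb{R}$ carries an orthonormal left-invariant frame $\{X_1,X_2,X_3\}$ for which $[X_1,X_2]=X_2$ and $X_3=\partial_t$ is central, the metric being the product of the constant-curvature metric on $H^2$ with the flat metric on $\mathbb{R}$ (after rescaling we may normalize so that $\ric$ has matrix $\mathrm{diag}(-1,-1,0)$ in this frame; in general the $-1$ is the negative Einstein constant of the $H^2$ factor). First I would record the Levi-Civita connection from the Koszul formula, namely $\nabla_{X_1}X_1=\nabla_{X_1}X_2=0$, $\nabla_{X_2}X_1=-X_2$, $\nabla_{X_2}X_2=X_1$, with every covariant derivative involving $X_3$ vanishing, and then, exactly as in Proposition \ref{prop:lie derivative computation} for the unimodular groups, compute $\frac{1}{2}\mathcal{L}_Xg$ and hence $\ric_X^m$ on all pairs $(X_i,X_j)$ for a left-invariant field $X=a_1X_1+a_2X_2+a_3X_3$.

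Next I would impose $\ric_X^m=Ag$. The mixed equations read $\ric_X^m(X_i,X_3)=-\frac{1}{m}a_ia_3=0$ for $i=1,2$, forcing either $a_3=0$ or $a_1=a_2=0$. In the branch $a_1=a_2=0$ one has $X=a_3\partial_t$ with $\mathcal{L}_Xg=0$, so $\ric_X^m=\ric-\frac{1}{m}a_3^2\,dt\otimes dt$; the $H^2$-block gives $A=-1$ (in general $A$ is the negative Einstein constant of the $H^2$ factor) and the $(X_3,X_3)$-equation gives $a_3^2=m$, which is solvable exactly when $m>0$, producing the nontrivial Killing potential $X=\pm\sqrt{m}\,\partial_t$. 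In the branch $a_3=0$ the $(X_3,X_3)$-equation forces $A=0$ and the system reduces to the $H^2$ factor being $m$-quasi Einstein with $A=0$; examining the $(X_1,X_1)$-equation $-1-\frac{1}{m}a_1^2=0$ shows this requires $m<0$. This branch, however, does not survive on a compact quotient of the geometry $H^2\times\mathbb{R}$: such quotients are $\Sigma\times S^1$ with $\Sigma=H^2/\Gamma$ a closed hyperbolic surface, and a nonzero left-invariant field of the affine group is not $\Gamma$-invariant, hence does not descend. (Equivalently, one may invoke Theorem \ref{theorem:compact einstein cross einstein}: since the $H^2$ factor is not $\mathbb{R}$, every nontrivial solution on the compact quotient must be a parallel field along the $\mathbb{R}$ factor, which is precisely the branch $a_1=a_2=0$.) Combining the branches shows solutions exist only for $A<0$ and $m>0$; conversely, given any $A<0$ and $m>0$, rescaling the $H^2$ factor so that $\ric_{H^2}=Ag_{H^2}$ and setting $X=\sqrt{-mA}\,\partial_t$ gives $\ric_X^m=Ag$ with $X\neq 0$, and passing to $\Sigma\times S^1$ yields the desired compact example.

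The main obstacle is the branch $a_3=0$: on the bare Lie group $H^2\times\mathbb{R}$ there genuinely are left-invariant solutions of $\ric_X^m=Ag$ with $A=0$ and $m<0$ (for instance $X=-X_1$, $m=-1$), so the statement is really a statement about the geometry and its compact quotients, and the crux is to rule these spurious solutions out there — either via the non-descent of affine-group-invariant fields under a cocompact Fuchsian group, or by appealing to the product classification of Theorem \ref{theorem:compact einstein cross einstein}. Once that point is settled, everything else reduces to the elementary diagonal bookkeeping above together with the explicit converse family $X=\pm\sqrt{-mA}\,\partial_t$.
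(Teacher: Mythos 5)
Your argument is correct, but it takes a genuinely different route from the paper, and in doing so it surfaces a real subtlety. The paper's proof computes $\mathcal{L}_Xg$ from the connection exactly as you do, but then immediately invokes Corollary \ref{cor:Unimodular leftinvariant killingfield} to force $X$ to be Killing, which kills the components $a_1,a_2$ tangent to $H^2$ in one stroke and leaves only the $\partial/\partial r$ branch. You instead solve the full system $\ric_X^m=Ag$ directly, which is why you find the extra branch $a_3=0$, $A=0$, $m<0$ with $X$ tangent to the $H^2$ factor; this branch is genuinely there on the simply connected Lie group (your $X=-X_1$, $m=-1$ is precisely the paper's own Example 6.9 with $\ric_X^m=(-1-m)g=0$), and it is a fair criticism that Corollary \ref{cor:Unimodular leftinvariant killingfield} is stated for \emph{unimodular} groups while $H^2\times\mathbb{R}$, as the product of the affine group with $\mathbb{R}$, is not unimodular, so the paper's appeal to it here is not self-contained. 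Your resolution --- that the proposition is really about the geometry and its compact quotients $\Sigma\times S^1$, on which the spurious branch is excluded either because a nonzero left-invariant field of the affine group is not invariant under a cocompact Fuchsian group, or more cleanly by Lemma \ref{lemma:either positive or one-dimensional}, which forces $X|_{H^2}=0$ and $A=\rho_{H^2}<0$ since $H^2$ is Einstein and not one-dimensional --- is sound and consistent with how $H^2\times\mathbb{R}$ is treated in the main theorem and the summary table. What the paper's approach buys is brevity (one corollary disposes of $a_1,a_2$); what yours buys is an honest accounting of where compactness is actually used, at the cost of having to carry and then eliminate the extra branch. The explicit converse family $X=\pm\sqrt{-mA}\,\partial/\partial r$ after rescaling the $H^2$ factor matches the paper's solution $X=\pm\sqrt{\rho m}\,\partial/\partial r$ with $A=-\rho$.
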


\begin{proof}
Let $\{X_1,X_2,\frac{\partial}{\partial r}\}$ be an orthonormal basis where $\{X_1,X_2\}$ are in $TH^2$ and $\frac{\partial}{\partial r}$ is in $T\mathbb{R}$. Let $X=a_1X_1+a_2X_2+a_3\frac{\partial}{\partial r}$. We compute the Lie derivatives as follows: 

\begin{align*}
&\mathcal{L}_Xg(X_1,X_1)=2g(\nabla_{X_1}X,X_1)=2g(-a_2X_2,X_1)=0\\[0.5ex]
&\mathcal{L}_Xg(X_2,X_2)=2g(\nabla_{X_2}X,X_2)=2g(-a_1X_2+a_2X_1,X_2)=-2a_1\\[0.5ex]
&\mathcal{L}_Xg (\textstyle \frac{\partial}{\partial r},\frac{\partial}{\partial r})=0\\[0.5ex]
&\mathcal{L}_Xg(X_1,X_2)=g(\nabla_{X_1}X,X_1)+g(\nabla_{X_1}X,X_1)=g(-a_1X_2+a_2X_1,X_1)=a_2\\[0.5ex]
&\mathcal{L}_Xg (X_2, \textstyle\frac{\partial}{\partial r} )=g(\nabla_{X_2}X, \frac{\partial}{\partial r})+g(\nabla_{\textstyle\frac{\partial}{\partial r}}X,X_2)=0
\end{align*}
\vspace{1em}

By Corollary \ref{cor:Unimodular leftinvariant killingfield}, $X$ must be a Killing field, so we set $\mathcal{L}_Xg=0$ to get that $a_1=a_2=0$. We have that $\ric(X_1,X_1)=\ric(X_2,X_2)=-\rho g$ where $\rho>0$, and $\ric(\frac{\partial}{\partial r},\frac{\partial}{\partial r})=0$, so we can compute $\ric_X^m$ as follows:

\begin{align*}
\ric_X^m(X_1,X_1)&=-\rho\\
\ric_X^m(X_2,X_2)&=-\rho\\
\ric_X^m(\frac{\partial}{\partial r},\frac{\partial}{\partial r})&=-\frac{1}{m}a_3^2\\
\end{align*}
\vspace{1em}

%
Thus, $\ric_X^m=Ag$ if and only if $X=\pm\sqrt{\rho m}\frac{\partial}{\partial r}$, where $A=-\rho<0$ and $m>0$.

\end{proof}

We will show that we can find examples of $X$ such that $\ric_X^m=0$ on $SU(2)$ with left-invariant metric. 

%
\begin{prop}
Consider $SU(2)$. If $g$ is a left-invariant metric and if $X$ is a left-invariant vector field, then there exist solutions to $\ric_X^m=Ag$ if and only if either $m>0$ with $A$ any real number or $m<0$ with $A>0$.
\end{prop}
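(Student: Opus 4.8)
The plan is to reduce to a one-parameter problem exactly as in the preceding propositions of this section, and then carry out a sign analysis using Milnor's explicit formula for the principal Ricci curvatures (Lemma \ref{lemma:Milnor}, \cite{Milnor}).

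First, by Corollary \ref{cor:Unimodular leftinvariant killingfield}, if $g$ and $X$ are left invariant and $\ric_X^m=Ag$, then $X$ is Killing. Let $\{X_1,X_2,X_3\}$ be the Milnor basis, in which $\ric$ is diagonal, and write $X=a_1X_1+a_2X_2+a_3X_3$. For $i\neq j$ one then has $\ric_X^m(X_i,X_j)=-\tfrac{1}{m}a_ia_j$, and setting this equal to $Ag(X_i,X_j)=0$ forces $a_ia_j=0$ for all $i\neq j$; hence $X$ is a multiple of a single Milnor basis vector, and after relabeling we may take $X=a_1X_1$. Substituting into Proposition \ref{prop:lie derivative computation}, the Killing condition collapses to the one equation $a_1(\lambda_2^*-\lambda_3^*)=0$. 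So either $a_1=0$, in which case $\ric=Ag$ forces $g$ to be bi-invariant (all $\lambda_i^*$ equal, hence $A>0$) with $m$ arbitrary nonzero; or $\lambda_2^*=\lambda_3^*$, i.e. $g$ is a Berger-type metric with distinguished axis $X_1$.

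In the Berger case put $\lambda_1^*=\alpha>0$ and $\lambda_2^*=\lambda_3^*=\beta>0$. Milnor's formula gives $r(X_1)=\tfrac12\alpha^2$ and $r(X_2)=r(X_3)=\tfrac12\alpha(2\beta-\alpha)$, so the diagonal components of $\ric_X^m=Ag$ are precisely $A=\tfrac12\alpha(2\beta-\alpha)$ and $a_1^2=m\,\alpha(\alpha-\beta)$ (the off-diagonal components vanish automatically). Thus a nontrivial solution exists, for given $m$ and $A$, iff $\alpha,\beta>0$ can be chosen with $\tfrac12\alpha(2\beta-\alpha)=A$ and $m\,\alpha(\alpha-\beta)>0$. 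When $m>0$ the second condition is $\alpha>\beta$, and as $(\alpha,\beta)$ ranges over $\{\alpha>\beta>0\}$ the value $\tfrac12\alpha(2\beta-\alpha)$ sweeps all of $\mathbb{R}$, so every $A$ is realized; when $m<0$ the second condition is $\alpha<\beta$, which makes $2\beta-\alpha>0$ and hence $A>0$, and conversely every $A>0$ arises, e.g. by taking $\alpha\in(0,\sqrt{2A})$ and $\beta=\tfrac{\alpha}{2}+\tfrac{A}{\alpha}$. Combined with the bi-invariant (trivial) solutions, this gives exactly the stated dichotomy: solutions exist iff $m>0$ with $A$ arbitrary, or $m<0$ with $A>0$.

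Each step is short; the part that needs care is the ``only if'' direction. One must make sure the Killing condition genuinely pins $X$ to the distinguished axis of a Berger metric, so that no solution hides in a degenerate Ricci eigenspace of a non-symmetric metric, and then track the interplay between the sign of $m$ and the signs of $\alpha-\beta$ and $2\beta-\alpha$. It is here that the explicit values $r(X_i)$ from Milnor's formula, rather than merely the signature data in Table \ref{table: ricci signature}, are essential: they are what allow one to conclude that the whole range of $A$ of the permitted sign is attained.
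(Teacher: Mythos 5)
Your proof is correct, and the opening reduction (Killing via Corollary \ref{cor:Unimodular leftinvariant killingfield}, then the off-diagonal equations $-\tfrac{1}{m}a_ia_j=0$ pinning $X$ to a single Milnor axis) is exactly the paper's. Where you diverge is in the case analysis: the paper treats the Ricci eigenvalues $\rho_i$ as abstract data and branches on the three possible signatures $(+,+,+)$, $(+,0,0)$, $(+,-,-)$ from Table \ref{table: ricci signature}, solving the diagonal equations in each case, whereas you parametrize the relevant (Berger) metrics by the structure constants $\lambda_1^*=\alpha$, $\lambda_2^*=\lambda_3^*=\beta$ and use Milnor's explicit formula $r(X_1)=\tfrac12\alpha^2$, $r(X_2)=r(X_3)=\tfrac12\alpha(2\beta-\alpha)$ to get the closed-form conditions $A=\tfrac12\alpha(2\beta-\alpha)$ and $a_1^2=m\alpha(\alpha-\beta)$. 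Your route buys a genuinely sharper existence statement: you verify that \emph{every} $A\in\mathbb{R}$ (for $m>0$) and every $A>0$ (for $m<0$) is actually attained, a point the paper leaves implicit (it follows there only by rescaling the metric within each signature class). The paper's route is slightly lighter on computation since it never needs the quantitative Ricci formula, only the signature classification. One small remark: your parenthetical that $a_1=0$ and $\ric=Ag$ force the round metric does use that all $\lambda_i^*>0$ on $SU(2)$ (otherwise a degenerate Einstein case with some $\mu_i=0$ could intrude); this is easily checked from Milnor's formula, and in any event that branch is subsumed by your Berger analysis, so nothing is at stake.
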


\begin{proof}
Let $X=a_1X_1+a_2X_2+a_3X_3$. By Lemma \ref{lemma: aiaj=0}, at least two $a_i$'s must be zero. By Corollary \ref{cor:Unimodular leftinvariant killingfield}, $X$ is a Killing field, so we compute $\mathcal{L}_Xg$ using Proposition \ref{prop:lie derivative computation} as follows:

\begin{equation}\label{eq:lie derivative}
\setlength{\jot}{0.5em}
\begin{aligned}
&\mathcal{L}_Xg(X_1,X_2)= a_3(\lambda_1-\lambda_2) \\
&\mathcal{L}_Xg(X_2,X_3)= a_1(\lambda_2-\lambda_3)\\
&\mathcal{L}_Xg(X_1,X_3)=a_2(\lambda_3-\lambda_1).
\end{aligned}
\end{equation}
\vspace{1em}

By Table \ref{table: ricci signature},
the Ricci form is either $(+,+,+)$, $(+,0,0)$, or $(+,-,-)$. Let $|\ric(X_i,X_i))=\rho_i$ for $i=1,2,3$. If the Ricci form is $(+,+,+)$, then we have the following computations for $\ric_X^m$:

\begin{align*}
&\ric_X^m(X_1,X_1)=\rho_1-\frac{1}{m}a_1^2\\[0.5ex]
&\ric_X^m(X_2,X_2)=\rho_2-\frac{1}{m}a_2^2\\[0.5ex]
&\ric_X^m(X_3,X_3)=\rho_3-\frac{1}{m}a_3^2
\end{align*}
\vspace{1em}

%
%

Setting $\ric_X^m=Ag$, if all three $a_i$'s are zero, then $X=0$ and $\ric_X^m=\rho g$ where $\rho=\rho_1=\rho_2=\rho_3$.
\vspace{1em}

If $a_1=a_2=0$ and $a_3\neq 0$, and $\rho=\rho_1=\rho_2$, then $$X=\pm\sqrt{m(\rho_3-\rho)}X_3.$$ 
\vspace{1em}

Similarly, if $a_1=a_3=0$, and $\rho=\rho_1=\rho_3$, then $$X=\pm\sqrt{m(\rho_2-\rho)}X_2.$$
\vspace{0.5em}

If $a_2=a_3=0$, and $\rho=\rho_2=\rho_3$, then $$X=\pm\sqrt{m(\rho_1-\rho)}X_1.$$ In these cases, $\ric_X^m=\rho g$, where $\rho>0$, and $m$ can be positive or negative, depending on the sign of $\rho_3-\rho$, $\rho_2-\rho$, and $\rho_1-\rho$, respectively.
\vspace{1em}

If the Ricci form is $(+,0,0)$, then:

\begin{align*}
&\ric_X^m(X_1,X_1)=\rho_1-\frac{1}{m}a_1^2\\[0.5ex]
&\ric_X^m(X_2,X_2)=-\frac{1}{m}a_2^2\\[0.5ex]
&\ric_X^m(X_3,X_3)=-\frac{1}{m}a_3^2
\end{align*}
\vspace{1em}
%
%

The solutions to the above equations are $X=\pm\sqrt{\rho m}X_1$ and $\ric_X^m=0$. In this case, $m$ must be positive. 
\vspace{1em}

If the Ricci form is $(+,-,-)$, then 

\begin{align*}
&\ric_X^m(X_1,X_1)=\rho_1-\frac{1}{m}a_1^2\\[0.5ex]  
&\ric_X^m(X_2,X_2)=-\rho_2-\frac{1}{m}a_2^2\\[0.5ex]
&\ric_X^m(X_3,X_3)=-\rho_3-\frac{1}{m}a_3^2
\end{align*}
\vspace{1em}
%
%

Setting $\ric_X^m=Ag$, the solutions are $X=\pm\sqrt{m(\rho+\rho_1)}X_1$, where $\rho=\rho_2=\rho_3$. In this case, $\ric_X^m=-\rho g$ and $m$ must be positive.

\end{proof}

\section[]{Relation to Splitting Theorem, Myers' Theorem and Bochner's Theorem}\label{section: Splitting theorem and myers theorem}

According to Khuri-Woolgar-Wylie, the Splitting Theorem holds for $\ric_X^m$ if $m>0$ \cite[Theorem~2]{KhWoWy}. We also recall that if $(M,g)$ is a noncompact homogenous space, then it contains a line. Using the $\ric_X^m$ version of the Splitting Theorem and the fact about noncompact homogeneous spaces, we will show that of the 9 geometries which are 3-dimensional and homogeneous, the ones which don't split don't have solutions if $m>0$ and $A\geq 0$. 
\vspace{1em}

\begin{prop}$H^3$, $\widetilde{SL_2\mathbb{R}}$, $Nil$,$E(2)$, $H^2\times \mathbb{R}$, and $E(1,1)$ do not admit metrics such that $\ric_X^m=Ag$ for $m>0$ and $A\geq 0$.
\end{prop}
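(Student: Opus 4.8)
The plan is to combine the $\ric_X^m$-splitting theorem of Khuri--Woolgar--Wylie \cite[Theorem~2]{KhWoWy}, which holds precisely in the range $m>0$ at issue, with the fact recalled above that a noncompact homogeneous space always contains a line. Suppose toward a contradiction that one of these six geometries carries a metric $g$ and a vector field $X$ with $\ric_X^m=Ag$ for some $m>0$ and $A\geq 0$; if one begins instead from a compact locally homogeneous quotient, pass to the universal cover, which by Singer \cite{Singer} is the corresponding simply connected model and still carries a solution. Each of the six is complete, noncompact and homogeneous, hence contains a line, and $A\geq 0$ gives $\ric_X^m=Ag\geq 0$. By the splitting theorem the space is therefore isometric to a Riemannian product $\mathbb{R}\times N$, the line being the $\mathbb{R}$-factor.

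For $H^3$, $\widetilde{SL_2(\mathbb{R})}$, $Nil$, $E(2)$ and $E(1,1)$ this is already a contradiction, since none of these is a nontrivial Riemannian product. A product $\mathbb{R}\times N^2$ has Ricci eigenvalues $\{0,\kappa,\kappa\}$, with $\kappa$ the Gaussian curvature of the surface factor. For the four unimodular Lie groups, Table~\ref{table: ricci signature} shows that the Ricci signature of any left-invariant metric is $(+,-,-)$ or $(0,0,-)$, neither of which has that form; and $H^3$, having constant negative curvature, has $\ric$ a negative multiple of $g$, again not of that form (equivalently, $H^3$ is an irreducible symmetric space). So no isometric splitting $\mathbb{R}\times N$ exists --- contradiction. (The flat left-invariant metric on $E(2)$ does not interfere: its compact quotients are flat, hence already counted among quotients of $\mathbb{R}^3$, so the ``$E(2)$-geometry'' here is the one with Ricci signature $(+,-,-)$.)

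It remains to treat $H^2\times\mathbb{R}$, which genuinely does split, as $\mathbb{R}\times H^2$. Let $\partial_t$ be the unit field along the $\mathbb{R}$-factor and set $u=g(X,\partial_t)$. Since $\ric(\partial_t,\partial_t)=0$ and $\nabla_{\partial_t}\partial_t=0$, the $(\partial_t,\partial_t)$-component of $\ric_X^m=Ag$ becomes, along each line $t\mapsto(t,p)$, the equation $u'-\tfrac1m u^2=A$. As $u$ is defined for all $t$, Lemma~\ref{lemma:function} together with $m>0$ and $A\geq 0$ forces $A=0$ and $u\equiv 0$; hence $X$ is everywhere tangent to the $H^2$-factor, and restricting $\ric_X^m=0$ to that factor yields $\ric_X^m=0$ on $H^2$ (with $X$ replaced by $X|_{\{t\}\times H^2}$ for any fixed $t$). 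But $H^2$ is once more a complete noncompact homogeneous space with $\ric_X^m=0\geq 0$, so \cite[Theorem~2]{KhWoWy} applies again and forces $H^2$ to be isometric to a Riemannian product $\mathbb{R}\times\mathbb{R}$, i.e.\ flat --- absurd. This is exactly why $S^2\times\mathbb{R}$ is not covered by the proposition: splitting off its $\mathbb{R}$-factor leaves the \emph{compact} surface $S^2$, to which the splitting theorem cannot be reapplied (that geometry is instead handled in Section~\ref{section: analyze new equation} via a direct analysis of $\tfrac12\mathcal{L}_Xg-\tfrac1m X^*\otimes X^*=\lambda g$). As a cross-check for the compact-quotient statements one may also simply trace $\ric_X^m=Ag$ and integrate, using $\int\divergence X=0$ and that all six geometries have strictly negative constant scalar curvature.

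The step I expect to require the most care is the precise application of \cite[Theorem~2]{KhWoWy}: for the first five geometries only the bare isometric splitting $\mathbb{R}\times N$ is needed, but for $H^2\times\mathbb{R}$ one must know that the $m$-quasi-Einstein equation descends to the $H^2$-factor --- which I would establish not through any a priori statement about the behavior of $X$ under the splitting, but via the $(\partial_t,\partial_t)$-component and Lemma~\ref{lemma:function}, which pin down $A=0$ and the tangency of $X$ at the same time. A subsidiary point is checking that the homogeneous metrics on these models are (equivalent to) the left-invariant ones classified by Milnor, so that Table~\ref{table: ricci signature} legitimately certifies irreducibility of the four unimodular geometries.
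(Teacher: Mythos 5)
Your proposal is correct and takes essentially the same route as the paper: apply the Khuri--Woolgar--Wylie splitting theorem to these noncompact homogeneous spaces (which all contain lines), observe that the first five do not split as $\mathbb{R}\times N$, and for $H^2\times\mathbb{R}$ reduce to the $H^2$-factor and apply the splitting theorem once more. Your write-up supplies details the paper leaves implicit --- certifying irreducibility via the Ricci signatures in Table~\ref{table: ricci signature} and using the $(\partial_t,\partial_t)$-component together with Lemma~\ref{lemma:function} to justify the descent of the equation to $H^2$ --- but the underlying argument is the same.
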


\begin{proof}
$H^3$, $\widetilde{SL_2\mathbb{R}}$, $Nil$,$E(2)$, and $E(1,1)$ all admit lines and don't split as $N\times\mathbb{R}$. Thus, the proposition follows by the Bakry \'Emery Ricci version of the Splitting Theorem by Khuri-Woolgar-Wylie.
\vspace{1em}

In the case of $H^2\times\mathbb{R}$, by the Splitting Theorem, $\ric_X^m\geq 0$ with $m>0$ if and only if $\ric_X^m\geq 0$ with $m>0$ on $H^2$. $H^2$ admits lines and doesn't split as $N\times \mathbb{R}$, so the proposition follows.
\end{proof}
\vspace{1em}

In \cite[Theorem~5]{ZhongminQian}, Qian proves that Myers' Theorem holds for gradient $m$-Bakry-\'Emery Ricci curvature when $m>0$. Limoncu showed in \cite[Theorem~1.2]{Limoncu} that Myers' Theorem holds for non-gradient $m$-Bakry-\'Emery Ricci curvature when $m>0$. In \cite{PhysicsMyersTheorem} Khuri-Woolgar use Limoncu's version of Myers' Theorem to study Near Horizon Geometries. Using this version of Myers' Theorem, we see that since $S^2\times \mathbb{R}$ and $\mathbb{R}^3$ are both noncompact, $S^2\times \mathbb{R}$ and $\mathbb{R}^3$ do not admit metrics such that $\ric_X^m=Ag$ for $m>0$ and $A>0$. In fact, since $SU(2)$ is the only compact simply-connected three-dimensional geometry, it is the only one that can admit a metric such that $\ric_X^m=Ag$ for $m>0$ and $A>0$.
\vspace{1em}

Next, we will discuss the $m<0$, $A<0$ case of the $m$-quasi Einstein metric. Bochner proved that if $(M,g)$ is compact, oriented and if $\ric<0$, then there are no nontrivial Killing fields (See \cite[Theorem~36]{Petersen}). This leads us to the next proposition.

\begin{prop}
If $M^n$ is a compact locally homogeneous Riemannian, and if $M^n$ is a compact quotient of a Lie group, $G$, then there are no solutions to $\ric_X^m=Ag$ if $m<0$ and $A<0$. 
\end{prop}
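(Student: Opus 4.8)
The plan is to use the structural results of Section~\ref{section: Unimodular Lie Groups} to reduce the equation to a constraint on $\ric$ alone, to show that this forces $\ric$ to be negative definite, and then to contradict Bochner's theorem.

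First I would pass to the covering $\pi\colon G\to M=\bigslant{G}{\Gamma}$. Because $G$ admits a discrete cocompact subgroup it is unimodular, so by Theorem~\ref{theorem: ric_X^m and chen without ricci} the lift $\widetilde X=\pi^{*}X$ is a left-invariant Killing field for the left-invariant metric $\widetilde g=\pi^{*}g$; in particular $X$ itself is a Killing field on $M$, so $\tfrac12\mathcal{L}_X g=0$ and the equation $\ric_X^m=Ag$ becomes
\[
  \ric = Ag + \tfrac1m\,X^{*}\otimes X^{*}.
\]
Since $X^{*}\otimes X^{*}$ is positive semidefinite and $m<0$, the tensor $\tfrac1m X^{*}\otimes X^{*}$ is negative semidefinite, while $Ag$ is negative definite because $A<0$; hence $\ric$ is negative definite at every point of $M$.

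Now $M$ is compact, and replacing it by its oriented double cover if necessary (still compact, still satisfying $\ric<0$) we may apply Bochner's theorem as recalled above: a compact oriented manifold with $\ric<0$ carries no nontrivial Killing field. Since $X$ is Killing this forces $X=0$, so in fact $\ric=Ag$ and $(G,\widetilde g)$ is a unimodular Lie group with a left-invariant Einstein metric of negative scalar curvature. No such metric exists; in the three-dimensional setting used for Theorem~\ref{thm: main theorem} this is immediate, since none of the unimodular three-dimensional Lie groups admits a left-invariant metric all of whose principal Ricci curvatures are negative (Table~\ref{table: ricci signature}). This contradiction shows that $\ric_X^m=Ag$ has no solution when $m<0$ and $A<0$.

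The crux is the first step. The $m$-quasi Einstein equation by itself puts no sign condition on $\ric$, since the term $\tfrac12\mathcal{L}_X g$ is a priori uncontrolled; everything hinges on knowing that the potential field is genuinely Killing, not merely left-invariant, which is exactly what Theorem~\ref{theorem: ric_X^m and chen without ricci} supplies and where the unimodularity forced by cocompactness enters. A secondary point is the exclusion of the trivial solution $X=0$: this is the statement that a compact quotient of a unimodular Lie group is never Einstein with negative Einstein constant, which follows abstractly from the non-existence of left-invariant negative Einstein metrics on unimodular Lie groups and concretely, in dimension three, from Milnor's Ricci-signature computations.
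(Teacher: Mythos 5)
Your proof is correct and follows essentially the same route as the paper's: lift to $G$, invoke Theorem~\ref{theorem: ric_X^m and chen without ricci} to get that $\widetilde X$ is Killing, rewrite the equation as $\ric=A\widetilde g+\tfrac1m\widetilde X^*\otimes\widetilde X^*<0$, and contradict Bochner. You are in fact slightly more careful than the paper, which does not explicitly rule out the trivial solution $X=0$ (i.e.\ a negative left-invariant Einstein metric); your appeal to Milnor's signature table handles this in the three-dimensional case where the proposition is applied, though your blanket claim that no unimodular Lie group of any dimension admits such a metric is asserted rather than proved.
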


\begin{proof}
By Lemma \ref{theorem: ric_X^m and chen without ricci}, $\widetilde{X}$ is Killing on $G$. Then, $\ric=A\widetilde{g}+\frac{1}{m}\widetilde{X}^*\otimes \widetilde{X}^*$ which is negative, giving us a contradiction by Bochner's Theorem.
\end{proof}

\begin{corollary}
If $M^3$ is a compact locally homogeneous Riemannian manifold which satisfies $\ric_X^m=Ag$ with $m<0$ and $A<0$, then $M^3$ cannot be a compact quotient of $\mathbb{R}^3$, $SU(2)$, $\widetilde{SL_2(\mathbb{R})}$, $Nil$, $E(1,1)$, $H^2\times \mathbb{R}$, or $E(2)$.
\end{corollary}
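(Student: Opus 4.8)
The plan is to reduce the statement for compact quotients of the six listed three-dimensional geometries to the preceding proposition, which rules out $\ric_X^m = Ag$ with $m<0$ and $A<0$ whenever $M^n$ is a compact quotient of a Lie group. First I would observe that of the seven geometries named, six of them—$\mathbb{R}^3$, $SU(2)$, $\widetilde{SL_2(\mathbb{R})}$, $Nil$, $E(1,1)$, and $E(2)$—are themselves Lie groups, as recorded in Section \ref{section: locally homogeneous 3-manifolds}. So if $M^3$ is a compact quotient of any of these and satisfies $\ric_X^m=Ag$ with $m<0$, $A<0$, the previous proposition applies directly and yields a contradiction.

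The only remaining case is $H^2\times\mathbb{R}$, which is not a Lie group but is a product in which $H^2$ is a Lie group (the $ax+b$ group). Here I would argue as follows: a compact quotient $M^3$ of $H^2\times\mathbb{R}$ pulls back, via the universal covering map $\pi$, to $(H^2\times\mathbb{R},\widetilde g,\widetilde X)$ with $\ric_{\widetilde X}^m = A\widetilde g$, since $\pi$ is a local isometry. Because $H^2\times\mathbb{R}$ is a Lie group (product of the Lie group $H^2$ with $\mathbb{R}$) admitting a cocompact discrete subgroup, Theorem \ref{theorem: ric_X^m and chen without ricci} shows $\widetilde X$ is left-invariant and Killing. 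Then $\ric_{\widetilde g} = A\widetilde g + \tfrac{1}{m}\widetilde X^*\otimes\widetilde X^*$; with $m<0$ and $A<0$ this tensor is negative definite, so $\ric<0$ on the compact oriented manifold $M^3$ (passing to the oriented double cover if necessary), while $\widetilde X$ descends to a Killing field—contradicting Bochner's theorem exactly as in the previous proposition. Alternatively, one can simply note that $H^2\times\mathbb{R}$ is covered by the argument of the previous proposition verbatim once one uses the Lie group structure on $H^2$, so in fact no separate work is needed.

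The main obstacle, such as it is, is purely bookkeeping: making sure each of the seven geometries is accounted for, and in particular that $H^2\times\mathbb{R}$—the one entry not literally appearing in the hypothesis of the previous proposition—is handled by invoking the Lie group structure of $H^2$ rather than being left as a gap. There is no new analytic content; the corollary is essentially a specialization of the previous proposition to dimension three together with Singer's list. I would therefore keep the proof to a sentence or two, citing the previous proposition and noting that all seven spaces are (products involving) Lie groups admitting cocompact lattices.
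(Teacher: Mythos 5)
Your proposal is correct and follows essentially the same route as the paper: the corollary is read off immediately from the preceding proposition, since each of the seven listed geometries is (a product of) Lie groups admitting a cocompact discrete group of isometries, and the Killing-plus-Bochner argument then applies verbatim. Your extra attention to $H^2\times\mathbb{R}$ mirrors how the paper itself handles that geometry in Section \ref{section: lie groups} via the Lie group structure of $H^2$, so no genuinely different machinery is introduced.
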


\section[]{$m$-Quasi Einstein Equation on Geodesics}\label{section: analyze new equation}

Our next definition and proposition deal with analyzing the equation $\displaystyle\frac{1}{2}\mathcal{L}_Xg-\frac{1}{m}X^*\otimes X^*=Ag$, which we will use to find $m$-quasi Einstein solutions to $S^2\times \mathbb{R}$ and $H^3$. We will also prove theorems for more general spaces using this analysis.

\begin{defn} Let $\gamma(t)$ be a unit speed geodesic. We define $\varphi_\gamma(t)$ as $g(X_{\gamma(t)},\dot\gamma(t))$. Note that $\varphi_\gamma(t)$ is well defined for all $t$ that $\gamma(t)$ is defined. If it is clear which $\gamma(t)$ we are defining $\varphi_\gamma(t)$ along, then we will call our function $\varphi(t)$ rather than $\varphi_\gamma(t)$.
\end{defn}

\begin{prop}\label{prop:lambda m >0 =0 <0}
Let $(M,g)$ be a complete Riemannian manifold and let $\gamma: (-\infty,\infty)\rightarrow M$ be a unit speed geodesic. Suppose the equation $$\frac{1}{2}\mathcal{L}_Xg(\dot\gamma,\dot\gamma)-\frac{1}{m}g(X,\dot\gamma)g(X,\dot\gamma)=\lambda g(\dot\gamma,\dot\gamma)$$ is satisfied at every point on $\gamma$.
\begin{enumerate}\setlength\itemsep{1em}
    \item If $\lambda=0$ for $m\neq 0$ at every point along $\gamma$, then $\varphi(t)=0$.

    \item If $\lambda m>0$ at every point along $\gamma$, then there are no complete solutions to $\frac{1}{2}\mathcal{L}_X g-\frac{1}{m}X^*\otimes X^*=\lambda g$.
    \item If $\lambda m<0$ along a geodesic, then 
    \begin{center}
        $\varphi(t)=\sqrt{-\lambda m}\tanh\bigg(\frac{\sqrt{-\lambda m}}{m}(t+C)\bigg)$ or \\
        $\varphi(t)=\pm\sqrt{-\lambda m}$.
    \end{center}
    
\end{enumerate}
\end{prop}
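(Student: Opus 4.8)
The plan is to reduce the hypothesis, restricted to $\gamma$, to the scalar ODE already solved in Lemma \ref{lemma:function}, and then read off the three cases.

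First I would establish the identity $\varphi'(t) = \frac{1}{2}\mathcal{L}_X g(\dot\gamma,\dot\gamma)$ along $\gamma$. Since $\gamma$ is a geodesic, $\nabla_{\dot\gamma}\dot\gamma = 0$, so differentiating $\varphi(t) = g(X_{\gamma(t)},\dot\gamma(t))$ gives $\varphi'(t) = g(\nabla_{\dot\gamma}X,\dot\gamma)$. On the other hand, the torsion-free and metric-compatible properties of $\nabla$ give $\mathcal{L}_X g(Y,Z) = g(\nabla_Y X,Z) + g(\nabla_Z X,Y)$, so $\frac{1}{2}\mathcal{L}_X g(\dot\gamma,\dot\gamma) = g(\nabla_{\dot\gamma}X,\dot\gamma)$, which is the claimed identity.

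Next, since $\gamma$ is unit speed, $g(\dot\gamma,\dot\gamma)\equiv 1$ and $g(X,\dot\gamma)g(X,\dot\gamma) = \varphi(t)^2$, so the hypothesis becomes exactly $\varphi'(t) - \frac{1}{m}\varphi(t)^2 = \lambda$. Because $\gamma : (-\infty,\infty)\to M$ is defined for all $t$, $\varphi$ is a solution of this ODE on all of $\mathbb{R}$, so I would apply Lemma \ref{lemma:function} with $f = \varphi$. Case (1) is $\lambda = 0$, giving $\varphi \equiv 0$; case (2) is $\lambda m > 0$, where Lemma \ref{lemma:function} produces no globally defined solution, so the existence of such a complete $\gamma$ (equivalently, a complete solution of $\frac{1}{2}\mathcal{L}_X g - \frac{1}{m}X^*\otimes X^* = \lambda g$) is impossible; and case (3) is $\lambda m < 0$, giving $\varphi(t) = \pm\sqrt{-\lambda m}$ or $\varphi(t) = \sqrt{-\lambda m}\tanh\big(\frac{\sqrt{-\lambda m}}{m}(t+C)\big)$.

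I do not expect a serious obstacle. The only points requiring care are the coefficient bookkeeping in $\varphi' = \frac{1}{2}\mathcal{L}_X g(\dot\gamma,\dot\gamma)$, so that the resulting equation is literally the ODE of Lemma \ref{lemma:function} rather than a rescaled version, and, in case (2), being precise about the meaning of ``no complete solutions'': a complete unit-speed geodesic on which the equation holds would force a solution of the ODE defined on all of $\mathbb{R}$, which Lemma \ref{lemma:function} rules out.
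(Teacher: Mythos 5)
Your proposal is correct and follows essentially the same route as the paper: both reduce the hypothesis along $\gamma$ to the scalar ODE $\varphi'(t)-\frac{1}{m}\varphi^2(t)=\lambda$ via the identity $\varphi'(t)=\frac{1}{2}\mathcal{L}_Xg(\dot\gamma,\dot\gamma)$ and then invoke Lemma \ref{lemma:function}. Your explicit justification of that identity (using $\nabla_{\dot\gamma}\dot\gamma=0$ and metric compatibility) is a detail the paper leaves implicit, but the argument is the same.
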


\begin{proof}

We have the following set of equations:

\begin{equation*}
\begin{aligned}
\frac{d}{dt}(\varphi(t))&=\frac{1}{2}\mathcal{L}_X g(\dot\gamma,\dot\gamma)\\
&=\frac{1}{m}(X^*\otimes X^*)(\dot\gamma,\dot\gamma))+\lambda g(\dot\gamma, \dot\gamma)\\
&=\frac{1}{m}g(X,\dot\gamma)^2+\lambda\\
&=\frac{1}{m}\varphi^2(t)+\lambda.
\end{aligned}
\end{equation*}
\vspace{1em}

The proposition follows from Lemma \ref{lemma:function}.
\end{proof}
\vspace{1em}

\begin{remark}
If $M^n$ is a compact manifold, then we can prove Proposition \ref{prop:lambda m >0 =0 <0}(2) by using the Divergence Theorem. Taking the trace of both sides of $\displaystyle\frac{1}{2}\mathcal{L}_Xg-\frac{1}{m}X^*\otimes X^*=\lambda g$, we get $div(X)-\displaystyle\frac{1}{m}|X|^2=\lambda n$. Integrating both sides over $M$, we get

\begin{equation*}
\begin{aligned}
\displaystyle\int_M |X|^2&=-\int_M \lambda m n\\
&=-\lambda m n \vol(M)\\
\end{aligned}
\end{equation*}

Either $X=0$ and $\lambda=0$ or the left hand side is positive which implies $\lambda m$ must be negative. 
\end{remark}

In the following example, we provide an example of a manifold which satisfies $\ric_X^m=\lambda g$ with $\lambda m<0$.

\begin{ex} Let $M=S^1$ with the usual metric with $\{\frac{\partial}{\partial\theta}\}$ the basis vector. Let $X=\sqrt{-\lambda m}\frac{\partial}{\partial\theta}$ with $\lambda m<0$. Since $X$ is Killing and $S^1$ is Ricci flat, we get $\ric_X^m=\lambda g$. 

\end{ex}

Next, we give a global analysis of $\frac{1}{2}\mathcal{L}_Xg-\frac{1}{m}X^*\otimes X^*=\lambda g$ when $\lambda m<0$. In order to do this, we will first state a definition of critical point originally defined by Grove-Shiohama (Also see \cite{Petersen}). 
\vspace{1em}

\begin{defn}\cite{Petersen}\label{defn: criticalpoint}
Fix $p\in M$. A point $q$ is a critical point of the distance function to $p$ (is critical point to $p$) if, for every vector $V\in T_q M$, there is a minimal geodesic $\gamma$ with $\gamma(0)=p$, $\gamma(d(p,q))=q$ such that $g(\dot\gamma(d(p,q)),V)\leq 0$. 
\end{defn}
\vspace{1em}

\begin{lemma}\cite[Corollary~43]{Petersen}\label{lemma: criticalpointRn}
Suppose that there are no critical points of the distance function to $p$ in the annulus $\{q:a\leq d(p,q)\leq b\}$. Then $B(p,a)$ is homeomorphic to $B(p,b)$ and $B(p,b)$ deformation retracts onto $B(p,a)$. Moreover, if there are no critical points of $p$ in $M$, then $M$ is diffeomorphic to $\mathbb{R}^n$.
\end{lemma}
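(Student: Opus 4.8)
This is the isotopy lemma of Grove--Shiohama critical point theory, and the plan is to reproduce the standard argument: build a ``gradient-like'' vector field for the distance function $r := d(p,\cdot)$ on the annulus $\{a \le r \le b\}$ and flow along it. First I would produce, at each non-critical $q$ in the annulus, a unit vector $V_q \in T_qM$ with $g(\dot\gamma(r(q)),V_q) > 0$ for \emph{every} minimal geodesic $\gamma$ from $p$ to $q$; this is exactly the negation of Definition \ref{defn: criticalpoint}. A compactness argument --- the set of unit-speed minimal geodesics from $p$ to points in a small neighbourhood of $q$ is compact, and a limit of minimizers is a minimizer --- shows that, after extending $V_q$ to a smooth vector field on a neighbourhood $U_q$ of $q$, one still has $g(\dot\gamma(r(q')),V_q) \ge \delta_q > 0$ for all $q' \in U_q$ and all minimal $\gamma$ from $p$ to $q'$. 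Covering the compact annulus by finitely many such $U_{q_i}$, choosing a subordinate partition of unity $\{\rho_i\}$, and setting $V = \sum_i \rho_i V_{q_i}$, I get $g(\dot\gamma(r(q)),V(q)) > 0$ for all $q$ in the annulus and all minimal geodesics to $q$; by compactness this is bounded below by some $\varepsilon > 0$.

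Next I would invoke the first variation formula for arc length. Since $r$ is Lipschitz, along the flow $\phi_t$ of $V$ the upper right derivative of $t \mapsto r(\phi_t(q))$ equals $\min_\gamma g(\dot\gamma, V)$ over minimal geodesics from $p$ to $\phi_t(q)$, hence is $\ge \varepsilon$ while the flow line stays in the annulus, so $r$ increases strictly along flow lines. Rescaling $V$ to a field $W$ I can arrange $r(\phi_t^W(q)) = r(q) + t$ for $0 \le t \le b - r(q)$. The time-$(b-a)$ flow then carries $\bar B(p,a)$ onto $\bar B(p,b)$ and restricts to a homeomorphism $B(p,a) \to B(p,b)$; flowing backwards while keeping $B(p,a)$ pointwise fixed gives the asserted deformation retraction of $B(p,b)$ onto $B(p,a)$. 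One only obtains homeomorphisms, not diffeomorphisms, because the time needed to reach $\{r = a\}$ is merely continuous in the starting point.

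For the last statement, suppose $r$ has no critical points away from $p$. Choosing $\varepsilon_0 > 0$ small enough that $\exp_p$ is a diffeomorphism on the $\varepsilon_0$-ball gives $B(p,\varepsilon_0) \cong \mathbb{R}^n$; applying the first part with $a = \varepsilon_0$ and arbitrary $b$, and using completeness of $M$ so that the balls $B(p,b)$ exhaust $M$, one flows $\partial B(p,\varepsilon_0)$ outward over all of $M \setminus B(p,\varepsilon_0)$ and glues with the normal-coordinate chart on $B(p,\varepsilon_0)$ to build a homeomorphism --- a diffeomorphism, with a little extra care smoothing $W$ away from $\{r = \varepsilon_0\}$ --- of $M$ onto $\mathbb{R}^n$.

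The main obstacle is the construction step: making the compactness argument for the space of minimal geodesics precise so as to obtain first the local bounds $\delta_q > 0$ and then the global bound $\varepsilon > 0$, and correctly handling the first variation formula for the non-differentiable function $r$. Once the gradient-like field $W$ is in hand, the flow, the retraction, and the exhaustion argument for the final claim are all routine.
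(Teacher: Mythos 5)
The paper does not prove this lemma at all --- it is quoted verbatim from \cite[Corollary~43]{Petersen} as a known result --- and your sketch is precisely the standard Grove--Shiohama isotopy argument given in that reference (gradient-like field via compactness of minimal geodesics and a partition of unity, first variation to push level sets of $r$ outward, monotone-union/exhaustion for the final claim), so it is correct in approach and outline. The one step stated too glibly is ``rescaling $V$ to $W$ so that $r(\phi_t^W(q)) = r(q)+t$'': since $r$ is only Lipschitz one cannot rescale the field by $dr(V)$, and instead one reparametrizes each flow line by its (strictly increasing, bi-Lipschitz) $r$-value --- which is exactly why, as you correctly note, the annulus statement yields only a homeomorphism.
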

\vspace{1em}

Using similar techniques to those of Wylie in the proof of \cite[Proposition~1]{Wylie}, we will look for spaces which admit $\displaystyle\frac{1}{2}\mathcal{L}_Xg-\frac{1}{m}X^*\otimes X^*=\lambda g$ with $\lambda m<0$ everywhere. We will find that the only possibility is $S^1$ if the space is compact.
\vspace{1em}

\begin{prop}\label{prop: lambda_m<0 global condition}
If $M$ is a compact manifold which satisfies $\displaystyle\frac{1}{2}\mathcal{L}_Xg-\frac{1}{m}X^*\otimes X^*=\lambda g$ with $X\neq 0$ and $\lambda m<0$  along every geodesic, then $M=S^1$. 
\end{prop}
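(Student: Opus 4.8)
The plan is to argue by contradiction using the rigidity supplied by Proposition \ref{prop:lambda m >0 =0 <0}(3) together with the critical point theory of Lemma \ref{lemma: criticalpointRn}. Suppose $M$ is compact, $X\neq 0$, and $\frac{1}{2}\mathcal{L}_Xg-\frac{1}{m}X^*\otimes X^*=\lambda g$ with $\lambda m<0$ along every geodesic. Fix a point $p\in M$ and consider the distance function to $p$. The first step is to show that $p$ has \emph{no} critical points except possibly in a very restricted way; more precisely, I would examine a minimal unit-speed geodesic $\gamma$ emanating from $p$ and track $\varphi_\gamma(t)=g(X_{\gamma(t)},\dot\gamma(t))$. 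By Proposition \ref{prop:lambda m >0 =0 <0}(3), along $\gamma$ the function $\varphi_\gamma$ is either the constant $\pm\sqrt{-\lambda m}$ or the strictly monotone $\sqrt{-\lambda m}\tanh\!\big(\tfrac{\sqrt{-\lambda m}}{m}(t+C)\big)$. The key point is that in either case $\varphi_\gamma$ is (weakly) monotone and, crucially, in the $\tanh$ case it never vanishes away from a single value of $t$ and its sign is essentially pinned down; so near $p$, i.e. for small $t>0$, $\varphi_\gamma(t)$ is forced to have a definite sign independent of which minimal geodesic we chose (up to reversing the geodesic, which flips the sign).

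The second step is to turn this sign information into the statement that $p$ has no critical points of the distance function. A point $q$ at distance $\ell=d(p,q)$ is critical if every $V\in T_qM$ makes an obtuse-or-right angle with some incoming minimal geodesic's velocity. I would test this with $V=X_q$ (and with $V=-X_q$): if $X_q\neq 0$, then for $q$ to be critical we would need $g(\dot\gamma(\ell),X_q)\le 0$ for some minimal geodesic $\gamma$ from $p$ to $q$ and simultaneously $g(\dot\sigma(\ell),X_q)\ge 0$... wait — more carefully, criticality with $V=X_q$ gives a minimal $\gamma$ with $\varphi_\gamma(\ell)=g(\dot\gamma(\ell),X_q)\le 0$, and criticality with $V=-X_q$ gives a minimal $\sigma$ with $\varphi_\sigma(\ell)\ge 0$. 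Running the ODE analysis of Proposition \ref{prop:lambda m >0 =0 <0}(3) backwards from $t=\ell$ toward $t=0$ along each of these, and using that $\varphi$ at $t=0^+$ must match $g(X_p,\dot\gamma(0))$ consistently, I expect to derive that $X_p$ is forced to vanish, or that $\lambda m\ge 0$, or that the geodesic structure is that of $S^1$. The upshot of this step should be: either $X\equiv 0$ (excluded), or the distance function to $p$ has no critical points at all, or $M$ is one-dimensional.

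The third step invokes Lemma \ref{lemma: criticalpointRn}: if the distance function to $p$ has no critical points in $M$, then $M$ is diffeomorphic to $\mathbb{R}^n$, contradicting compactness. Hence the only surviving possibility is $\dim M=1$, and the only compact connected $1$-manifold is $S^1$, giving $M=S^1$.

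The main obstacle I anticipate is the second step: extracting, from the three-alternative ODE solution for $\varphi_\gamma$, a clean and uniform sign constraint on $g(X_q,V)$ for all minimal geodesics reaching $q$, so as to contradict the definition of critical point. The subtlety is that different minimal geodesics from $p$ to $q$ carry different constants $C$ in the $\tanh$ solution (and some may be in the constant regime while others are in the $\tanh$ regime), so one must argue that the value $\varphi_\gamma(\ell)$ cannot straddle $0$ across the family — equivalently, that the "incoming" sign of $X$ relative to minimal geodesics is coherent. I expect this to follow by tracking $\varphi$ back to $t=0$, where all these geodesics share the same starting point $p$ and hence the same $X_p$, but making the reversal-of-sign bookkeeping and the constant-vs-$\tanh$ case split airtight is where the real work lies. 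I would model the argument closely on Wylie's proof of \cite[Proposition~1]{Wylie}, adapting his handling of the analogous weighted quantity.
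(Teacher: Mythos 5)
There is a genuine gap, and it is exactly the step you flag as ``where the real work lies.'' Your plan starts the critical-point argument at an \emph{arbitrary} point $p$, and then hopes to force a coherent sign for $\varphi_\gamma(\ell)$ across all minimal geodesics from $p$ to $q$. As you yourself observe, this cannot work directly: each minimal geodesic carries its own constant $C$ (or sits in the constant regime), determined by $\varphi_\gamma(0)=g(X_p,\dot\gamma(0))$, which varies with the initial direction, so the values $\varphi_\gamma(\ell)$ genuinely can straddle $0$. The missing idea that resolves this is an \emph{extremum argument on $|X|^2$} that comes before any critical-point theory: since $M$ is compact, $|X|^2$ attains its max and min; at such a point $\mathcal{L}_Xg(X,X)=D_X|X|^2=0$, so plugging $(X,X)$ into the equation gives $-\tfrac{1}{m}|X|^4=\lambda|X|^2$, hence $|X|^2\in\{0,-\lambda m\}$ there. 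This yields a clean dichotomy: either $|X|^2\equiv-\lambda m$ on all of $M$, or $X$ vanishes at some point $p$. Only in the second case does one run the critical-point argument, and one anchors it at that zero $p$: then $\varphi_\gamma(0)=0$ for \emph{every} geodesic from $p$, which forces the $\tanh$ branch with a pinned constant and hence a definite sign of $\varphi_\gamma(t)$ for $t>0$ (testing with $V=X_q$ when $m<0$ and $V=-X_q$ when $m>0$). That kills all critical points, so Lemma \ref{lemma: criticalpointRn} gives $M\cong\mathbb{R}^n$, contradicting compactness.

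The second, related gap is that your plan has no mechanism actually producing $S^1$. In the paper the critical-point branch only ever yields a contradiction (it rules out the existence of a zero of $X$); the conclusion $M=S^1$ comes from the \emph{other} branch: when $|X|^2\equiv-\lambda m$, tracing the equation gives $\divergence(X)-\tfrac{1}{m}|X|^2=\lambda n$, hence $\divergence(X)=\lambda(n-1)$, and integrating over the compact $M$ forces $\lambda(n-1)\vol(M)=0$; since $\lambda\neq 0$ (as $\lambda m<0$), $n=1$ and $M=S^1$. Your step listing ``or that the geodesic structure is that of $S^1$'' as a possible outcome of the ODE bookkeeping does not substitute for this divergence-theorem computation. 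Without both the $|X|^2$ dichotomy and the trace-and-integrate step, the proposal does not close.
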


\begin{proof}
Since $M$ is compact, the function $f(p)=|X(p)|^2$ achieves a maximum and a minimum value. At the minimum, $0=D_X f=D_Xg(X,X)=2\mathcal{L}_X g(X,X)$. Then,

\begin{equation*}
\begin{aligned}
\frac{1}{2}\mathcal{L}_Xg(X,X)-\frac{1}{m}(X^*\otimes X^*)(X,X)&=\lambda g(X,X)\\
\Rightarrow -\frac{1}{m}|X|^4&=\lambda |X|^2.
\end{aligned}
\end{equation*}
\vspace{1em}

Then, either $|X|^2=-\lambda m$ or $|X|^2=0$ at the minimum point. By a similar argument, $|X|^2=-\lambda m$ or $|X|^2=0$ at the maximum point as well. Thus, either $|X|^2=-\lambda m$ for every point on $M$, or there exists a point $p\in M$ where $X(p)=0$.
\vspace{1em}

If $|X|^2=-\lambda m$ for every point in $M$, then taking the trace of $\displaystyle\frac{1}{2}\mathcal{L}_Xg-\frac{1}{m}X^*\otimes X^*=\lambda g$, we get $$\divergence(X)-\displaystyle\frac{|X|^2}{m}=\lambda n.$$ Plugging in $|X|^2=-\lambda m$, we get that $$\divergence(X)=\lambda (n-1).$$ Taking the integral of both sides over $M$ and using the Divergence Theorem, we get that $\lambda (n-1)\vol(M)=0$. If $\lambda=0$ then $X=0$ by Proposition \ref{prop:lambda m >0 =0 <0}(1), so  $n$ must be $1$. Since $M$ is compact, this means that $M=S^1$.
\vspace{1em}

In the case when there exists a point $p\in M$ such that $X(p)=0$, we will prove that there are no critical points to $p$ in $M$ and we will use Lemma \ref{lemma: criticalpointRn} to show that $M$ must be $\mathbb{R}^n$.
\vspace{1em}

By Definition \ref{defn: criticalpoint}, we want to show that there exists a vector $V$ such that every geodesic $\gamma$ with $\gamma(0)=p$, $\gamma(d(p,q))=q$ such that $g(\dot\gamma(d(p,q),V)>0$.
Consider the case when $m<0$. Let $\gamma(t)$ be a geodesic with $\gamma(0)=p$ and let $V=X$. If $\varphi(t)=g(X_{\gamma(t)},\dot\gamma(t))$, then since $X(p)=0$, $\varphi(0)$ must be $0$, so $\varphi(t)$ cannot be constantly nonzero.
\vspace{1em}

Then by Proposition \ref{prop:lambda m >0 =0 <0},  $$\varphi(t)=\sqrt{-\lambda m}\tanh\bigg(\displaystyle\frac{\sqrt{-\lambda m}}{m}t\bigg).$$

If $\varphi(t)=\sqrt{-\lambda m}\tanh\bigg(\displaystyle\frac{\sqrt{-\lambda m}}{m}t\bigg)$, then $\varphi(t)>0$ when $t>0$, so by Lemma \ref{lemma: criticalpointRn}, $M=\mathbb{R}^n$. This is a contradiction because $M$ is compact. 
\vspace{1em}

If $m>0$, then we again let $\gamma(t)$ be a geodesic with $\gamma(0)=p$. We will let $V=-X$ so that the differential equation we have to solve is $-\displaystyle\frac{d}{dt}\varphi(t)=\frac{1}{m}\varphi^2(t)+\lambda$. Then we get that the solutions are 

$$\varphi(t)=\sqrt{-\lambda m}\tanh
\bigg(\frac{-\sqrt{-\lambda m}}{m}t\bigg)\text{ or }\varphi(t)=\pm\sqrt{-\lambda m}.$$

$\varphi(t)$ cannot be $\pm\sqrt{-\lambda m}$ as in the $m<0$ case. If $\varphi(t)=\displaystyle\sqrt{-\lambda m}\tanh
\bigg(\frac{-\sqrt{-\lambda m}}{m}t\bigg)$, then $\varphi(t)$ is positive for $t>0$, giving us a contradiction by Lemma \ref{lemma: criticalpointRn}. 
\end{proof}

\begin{prop}On $H^3$, $\ric=-\rho g$ where $\rho>0$. $\ric_X^m=Ag$ if and only if $A+\rho=0$ and $X=0$.
\end{prop}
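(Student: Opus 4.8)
\emph{The plan is to prove the two directions separately.} The forward (``if'') direction is immediate: if $X=0$ then $\tfrac12\mathcal{L}_X g$ and $X^*$ both vanish, so $\ric_X^m=\ric=-\rho g$, and $\ric_X^m=Ag$ forces $A=-\rho$, i.e. $A+\rho=0$. For the converse, suppose $\ric_X^m=Ag$. Since the hyperbolic metric is Einstein with $\ric=-\rho g$, subtracting this from the $m$-quasi Einstein equation gives
\[
\frac{1}{2}\mathcal{L}_X g-\frac{1}{m}X^*\otimes X^* = (A+\rho)\,g =: \lambda g .
\]
Thus it suffices to show this identity can hold only with $\lambda=0$ and $X=0$; then $A+\rho=\lambda=0$ and $X=0$, as claimed. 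I would split into cases according to the sign of $\lambda m$, restricting the displayed tensor identity to unit speed geodesics and feeding the resulting scalar ODE for $\varphi_\gamma(t)=g(X_{\gamma(t)},\dot\gamma(t))$ into Lemma \ref{lemma:function} as packaged in Proposition \ref{prop:lambda m >0 =0 <0}.

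First I would dispose of the two cases that need only completeness, and these go through on hyperbolic space itself. If $\lambda m>0$, Proposition \ref{prop:lambda m >0 =0 <0}(2) says there are no complete solutions, and the hyperbolic metric is complete, so this case cannot occur. If $\lambda=0$, Proposition \ref{prop:lambda m >0 =0 <0}(1) gives $\varphi_\gamma(t)\equiv 0$ along every unit speed geodesic $\gamma$; fixing a point $p$ and letting $\dot\gamma(0)$ range over an orthonormal basis of $T_pM$ forces $g(X_p,\cdot)=0$, hence $X_p=0$, and since $p$ is arbitrary $X\equiv 0$ and $\lambda=A+\rho=0$.

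The decisive and hardest case is $\lambda m<0$, and this is the step I expect to be the main obstacle. Here Proposition \ref{prop:lambda m >0 =0 <0}(3) allows $\varphi_\gamma$ to be the nonconstant bounded solution $\sqrt{-\lambda m}\tanh(\cdots)$, so $X$ need not vanish along individual geodesics, and completeness alone is not enough to exclude it: on the simply connected model the radial function $u=\cosh r$, with $r$ the distance to a point, satisfies $\hess u=u\,g$ and, through $u=e^{-\phi/m}$, yields a genuine nontrivial radial gradient solution. The way I would close the case is to use that the manifold in the classification is the compact quotient $M^3$, and apply Proposition \ref{prop: lambda_m<0 global condition}: a compact $M$ carrying $\tfrac12\mathcal{L}_X g-\tfrac1m X^*\otimes X^*=\lambda g$ with $X\neq 0$ and $\lambda m<0$ must be $S^1$, contradicting $\dim M^3=3$. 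Hence $\lambda m<0$ is impossible, only $\lambda=0$ survives, and with it $X=0$ and $A=-\rho$. The crux to make precise in the write up is therefore exactly where compactness is indispensable: the cases $\lambda m>0$ and $\lambda=0$ are properties of the complete hyperbolic metric, whereas eliminating $\lambda m<0$ is a global statement that genuinely requires the compactness of the quotient via Proposition \ref{prop: lambda_m<0 global condition}.
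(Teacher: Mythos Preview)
Your proposal is correct and follows essentially the same route as the paper: reduce to $\tfrac12\mathcal{L}_Xg-\tfrac1m X^*\otimes X^*=(A+\rho)g$, then split into the three cases $\lambda m>0$, $\lambda=0$, $\lambda m<0$ and invoke Proposition~\ref{prop:lambda m >0 =0 <0} for the first two and Proposition~\ref{prop: lambda_m<0 global condition} for the third. Your write-up is in fact more careful than the paper's terse version, and your observation that the $\lambda m<0$ case genuinely requires compactness of the quotient (since $H^3$ itself carries nontrivial radial solutions) is a worthwhile point the paper leaves implicit.
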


\begin{proof}
If $(A+\rho)m>0$, then by Proposition \ref{prop:lambda m >0 =0 <0}, there are no solutions. If $(A+\rho)m<0$, then by Proposition \ref{prop: lambda_m<0 global condition}, there are no solutions. If $A+\rho=0$, then by Proposition \ref{prop:lambda m >0 =0 <0}, $X=0$.
\end{proof}

\begin{corollary}
There are no solutions to $\ric_X^m=Ag$ with $A>0$ on a compact hyperbolic manifold.
\end{corollary}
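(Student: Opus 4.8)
The plan is to reduce the statement on a compact hyperbolic manifold to the already-established case of $H^3$, or more precisely to exploit the local structure directly. A compact hyperbolic manifold $M^n$ is a compact quotient $H^n/\Gamma$, so its universal cover is $H^n$ with constant sectional curvature $-1$ (up to scaling), whence $\ric = -\rho g$ for some constant $\rho > 0$. The key observation is that the equation $\ric_X^m = Ag$ on $M$ pulls back to $\ric_{\widetilde X}^m = A\widetilde g$ on $H^n$, and since $\ric = -\rho \widetilde g$ this is exactly $\frac{1}{2}\mathcal{L}_{\widetilde X}\widetilde g - \frac{1}{m}\widetilde X^* \otimes \widetilde X^* = (A+\rho)\widetilde g$. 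So the whole problem is governed by the sign of $(A+\rho)m$, just as in the $H^3$ proposition.

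First I would note that $A > 0$ together with $\rho > 0$ forces $A + \rho > 0$. If $m > 0$, then $(A+\rho)m > 0$, and I would like to invoke Proposition \ref{prop:lambda m >0 =0 <0}(2) along a complete geodesic of $M$ (equivalently of $H^n$, since geodesics lift) to conclude there are no complete solutions — hence none on the compact $M$. If $m < 0$, then $(A+\rho)m < 0$; here I would invoke Proposition \ref{prop: lambda_m<0 global condition}, which says a compact manifold carrying $\frac{1}{2}\mathcal{L}_Xg - \frac{1}{m}X^*\otimes X^* = \lambda g$ with $\lambda m < 0$ everywhere must be $S^1$, contradicting $\dim M = n \geq 2$ (a hyperbolic manifold has dimension at least $2$, and $S^1$ is flat, not hyperbolic). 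Either way, no solutions with $A > 0$ exist. The case $m = 0$ is excluded throughout the paper.

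The main obstacle I anticipate is a bookkeeping subtlety rather than a deep difficulty: Proposition \ref{prop:lambda m >0 =0 <0} is stated for the tensor equation $\frac{1}{2}\mathcal{L}_Xg - \frac{1}{m}X^*\otimes X^* = \lambda g$ with $\lambda$ a constant, and I must make sure that subtracting the (constant-multiple-of-$g$) Ricci tensor genuinely converts $\ric_X^m = Ag$ into that form with $\lambda = A + \rho$ a bona fide constant — this is immediate because $\ric = -\rho g$ with $\rho$ constant on a space of constant curvature. I also need to be slightly careful that the cited propositions apply on the compact quotient $M$ directly (Proposition \ref{prop: lambda_m<0 global condition} is about compact $M$, so it applies to $M$ itself; Proposition \ref{prop:lambda m >0 =0 <0}(2) is about complete manifolds and geodesics, so I apply it to the complete $M$). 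Since both cited results are already proved in the excerpt, the corollary follows with no new computation.
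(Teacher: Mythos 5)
Your argument is correct and is essentially the paper's own: the paper proves the preceding $H^3$ proposition by rewriting $\ric_X^m=Ag$ as $\frac{1}{2}\mathcal{L}_Xg-\frac{1}{m}X^*\otimes X^*=(A+\rho)g$ and splitting on the sign of $(A+\rho)m$, invoking Proposition \ref{prop:lambda m >0 =0 <0}(2) when $(A+\rho)m>0$ and Proposition \ref{prop: lambda_m<0 global condition} when $(A+\rho)m<0$, and the corollary is the observation that $A>0$ forces $A+\rho\neq 0$ so only those two cases occur. Your version makes the dimension-$n$ statement and the compactness/completeness bookkeeping explicit, which the paper leaves implicit, but there is no substantive difference.
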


Next, we give an example of a space $(M,g)$ which is non Euclidean,  $m$-quasi Einstein and Einstein, and $X$ is not trivial.

\begin{ex}
Consider $H^2$ with the metric $g=dr^2+e^{2r}dx^2$ and let $X=-m\frac{\partial}{\partial r}$. Then we have the following:\\

$\nabla_{\frac{\partial}{\partial r}}\frac{\partial}{\partial x}=\frac{\partial}{\partial x}$\\

$\nabla_\frac{\partial}{\partial x}\frac{\partial}{\partial x}=-e^{2r}\frac{\partial}{\partial r}$\\

$\nabla_{\frac{\partial}{\partial r}}\frac{\partial}{\partial r}=0$.\\

Then, we have the following computations for the Ricci curvature:\\

$\ric(\frac{\partial}{\partial r},\frac{\partial}{\partial x})=0$\\

$\ric(\frac{\partial}{\partial r},\frac{\partial}{\partial r})=-1$\\

$\ric(\frac{\partial}{\partial x},\frac{\partial}{\partial x})=-e^{2r}$,\\

so we see that our metric satisfies $\ric=-1g$. We have the following computations for $\ric_X^m$:\\

$\ric_X^m(\frac{\partial}{\partial r},\frac{\partial}{\partial x})=0$\\

$\ric_X^m(\frac{\partial}{\partial r},\frac{\partial}{\partial r})=-1-\frac{1}{m}(-m)^2=-1-m$\\

$\ric_X^m(\frac{\partial}{\partial x},\frac{\partial}{\partial x})=e^{2r}(-1-m)$,\\

so we see that $\ric_X^m=(-1-m)g$.

\end{ex}
\vspace{1em}

We are now ready to solve for the solutions of the $m$-quasi Einstein equation for $S^j\times \mathbb{R}$ when $j\geq 2$.
\vspace{1em}

\begin{prop}\label{prop: sjtimesr}
Consider $S^j\times \mathbb{R}$ with the product metric and $j\geq 2$, $S^j$ with a constant curvature metric of Ricci curvature $\rho$, and $\mathbb{R}$ with the flat metric. Then there exists a nontrivial $m$-quasi Einstein metric, $\ric_X^m=Ag$ if and only if $A=\rho$ and $m<0$.
\end{prop}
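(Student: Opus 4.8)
The plan is to translate $\ric_X^m = Ag$ into $\frac{1}{2}\mathcal{L}_X g - \frac{1}{m}X^*\otimes X^* = Ag - \ric$ and to feed this, along two natural families of complete geodesics, into Proposition~\ref{prop:lambda m >0 =0 <0}. Writing the product metric as $g = g_{S^j} + dr^2$, the Ricci tensor of the product is $\rho$ in directions tangent to the $S^j$-factor and $0$ in the $\partial_r$ direction, so the right-hand side $Ag - \ric$ equals $(A-\rho)g$ on $TS^j$ and $A\,dr\otimes dr$ on the $\mathbb{R}$-factor. I would dispatch the easy ("if") direction first: when $m<0$ and $A=\rho$, the field $X = \sqrt{-\rho m}\,\partial_r$ (real precisely because $\rho>0$ and $m<0$) is a constant multiple of the parallel, hence Killing, field $\partial_r$, so $\mathcal{L}_X g = 0$ and $\frac{1}{m}X^*\otimes X^* = -\rho\,dr\otimes dr$, giving $\ric_X^m = \ric + \rho\,dr\otimes dr = \rho g$; this solution is nontrivial.

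For the converse, suppose $X\neq 0$ solves $\ric_X^m = Ag$. The first --- and I expect the main --- step is to run Proposition~\ref{prop:lambda m >0 =0 <0} along the closed great circles $\gamma$ inside the slices $S^j\times\{r_0\}$. Such a $\gamma$ is a unit-speed geodesic of the product that extends to all of $\mathbb{R}$ periodically, with velocity tangent to $S^j$, and along it the relevant scalar $\lambda = (Ag-\ric)(\dot\gamma,\dot\gamma) = A-\rho$ is constant. Proposition~\ref{prop:lambda m >0 =0 <0} then forbids $(A-\rho)m>0$ outright, and in the case $(A-\rho)m<0$ forces $\varphi_\gamma(t)=g(X_{\gamma(t)},\dot\gamma(t))$ to be either a strictly monotone $\tanh$-profile --- impossible, since $\varphi_\gamma$ is periodic --- or a constant equal to $\pm\sqrt{-(A-\rho)m}$. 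To kill the constant case I would use $j\geq2$: evaluating at $\gamma(0)=p$ shows the linear functional $v\mapsto g(X_p,v)$ on $T_pS^j$ takes only the two values $\pm\sqrt{-(A-\rho)m}$ on the unit sphere, which is impossible for a linear functional on a space of dimension $\geq 2$ (compare its values at $e_1$, $e_2$, and $(e_1+e_2)/\sqrt2$) unless that value is $0$ --- a contradiction. Hence $(A-\rho)m=0$, so $A=\rho$; and then the $\lambda=0$ case of Proposition~\ref{prop:lambda m >0 =0 <0} applied along every great circle gives $g(X,v)=0$ for all $v$ tangent to $S^j$, i.e.\ $X=\psi\,\partial_r$ for a smooth function $\psi$ on $S^j\times\mathbb{R}$.

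The last step is a one-variable computation. Since the $\mathbb{R}$-factor is flat and the metric splits, $\partial_r$ is parallel, so $\mathcal{L}_{\psi\partial_r}g = d\psi\otimes dr + dr\otimes d\psi$ and $X^* = \psi\,dr$. Substituting into $\frac{1}{2}\mathcal{L}_X g - \frac{1}{m}X^*\otimes X^* = Ag-\ric$ and pairing with $(\partial_r,v)$ for $v$ tangent to $S^j$ yields $v\psi=0$, so $\psi=\psi(r)$; pairing with $(\partial_r,\partial_r)$ yields the ODE $\psi' - \frac{1}{m}\psi^2 = A = \rho$ on all of $\mathbb{R}$. Because $X\neq0$, $\psi\not\equiv0$, so Lemma~\ref{lemma:function} --- which has no nonzero globally defined solution when $\lambda m\geq0$, here with $\lambda=\rho>0$ --- forces $\rho m<0$, i.e.\ $m<0$. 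This both completes the "only if" direction and, together with the explicit solution above (constant $\psi\equiv\sqrt{-\rho m}$, or indeed the whole $\tanh$-family), establishes the equivalence. The crux is genuinely the great-circle step: there, periodicity of the sphere geodesics together with the dimension hypothesis $j\geq2$ eliminates the sphere-component of $X$ and pins $A$ to $\rho$; once the problem has been reduced to the single ODE along $\mathbb{R}$, Lemma~\ref{lemma:function} closes it at once.
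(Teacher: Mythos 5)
Your proposal is correct and follows essentially the same route as the paper: apply Proposition~\ref{prop:lambda m >0 =0 <0} along the closed great circles of the $S^j$-slices, use their periodicity together with $j\geq 2$ to rule out $(A-\rho)m\neq 0$ and to force $X|_{S^j}=0$, and then reduce to the ODE $\psi'-\frac{1}{m}\psi^2=\rho$ along the $\mathbb{R}$-factor, which Lemma~\ref{lemma:function} resolves. Your explicit check that the mixed $(\partial_r,v)$ component forces $\psi=\psi(r)$ is a small completeness improvement over the paper's write-up, but not a different argument.
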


\begin{proof}
Let $\{X_1, X_2, \frac{\partial}{\partial r}\}$ be an orthonormal basis where $\{X_1, X_2\}$ is in $TS^2$ and $\{\frac{\partial}{\partial r}\}$ is in $T\mathbb{R}$. 
\vspace{1em}

First, consider the case $A-\rho=0$. Let $\gamma_{S^2}$ be a great circle on $S^2$ since the geodesics on $S^2$ are the great circles. 
We apply Proposition \ref{prop:lambda m >0 =0 <0} (1). This says that $X$ restricted to $S^2$ must be $0$. Letting $\gamma_{\mathbb{R}}$ be a unit speed geodesic in $\mathbb{R}$, we have $$\frac{1}{2}\mathcal{L}_Xg(\dot\gamma_{\mathbb{R}},\dot\gamma_{\mathbb{R}})-\frac{1}{m}X^*\otimes X^*(\dot\gamma_{\mathbb{R}},\dot\gamma_{\mathbb{R}})=A=\rho.$$ If $A-\rho=0$ and $m<0$, then by Proposition \ref{prop:lambda m >0 =0 <0}(3), $\varphi_{\gamma_{\mathbb{R}}}(t)$ is either $$\sqrt{-\rho m}\text{ or }\sqrt{-\rho m}\tanh\big(\frac{\sqrt{-\rho m}}{m}(t+C)\big)$$  which implies $$X=\sqrt{-\rho m}\frac{\partial}{\partial r}\text{ or }\sqrt{-\rho m}\tanh\big(\frac{\sqrt{-\rho m}}{m}(t+C)\big)\frac{\partial}{\partial r}.$$ If $A-\rho=0$ and $m>0$, then by Proposition \ref{prop:lambda m >0 =0 <0}(2), there are no solutions.
\vspace{1em}

If $(A-\rho)m>0$, then applying Proposition \ref{prop:lambda m >0 =0 <0}(2) to $\gamma_{S^2}$ in a similar fashion, we get that there are no solutions. 
\vspace{1em}

Consider the case $(A-\rho)m<0$. Since $S^2$ has dimension greater than 1, we can choose $\gamma_{S^2}$ perpendicular to $X$ at $0$ so that $\varphi_{\gamma_{S^2}}(0)=0$. and we apply Proposition \ref{prop:lambda m >0 =0 <0}(3) to $\gamma_{S^2}\in S^2$. Then $\varphi_{S^2}(t)$ is either $$\pm\sqrt{-(A-\rho) m}\text{ or }\sqrt{-(A-\rho) m}\tanh\bigg(\frac{\sqrt{(A-\rho) m}}{m}(t+C)\bigg).$$   $\varphi_{S^2}(t)$ cannot be $\displaystyle\sqrt{-(A-\rho) m}\tanh\bigg(\frac{\sqrt{(A-\rho) m}}{m}(t+C)\bigg)$ since $\gamma_{S^2}$ must be periodic and $\varphi_{S^2}(t)$ cannot be $\sqrt{-(A-\rho) m}$ since $\varphi_{\gamma_{S^2}}(0)=0$. This is a contradiction, so there are no solutions in this case as well.
\vspace{1em}
\end{proof}

Now, we will generalize Proposition \ref{prop: sjtimesr} to compact quotients of manifolds of the form $M\times N$, where $M$ and $N$ are Einstein manifolds. We prove this in a different way from Proposition \ref{prop: sjtimesr} because we cannot use the argument that $\varphi(t)$ must be periodic on $S^j$.
\vspace{1em}

\begin{lemma}\label{lemma:either positive or one-dimensional}
Consider a compact quotient of $M\times N$ with the product metric where $M$ is an Einstein manifold. If there is a nontrivial $m$-quasi Einstein solution on such a space, then either $X|_M=0$ or $M$ is one-dimensional. 
\end{lemma}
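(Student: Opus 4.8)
The plan is to extract from the $m$-quasi Einstein equation on the compact quotient $\bar M$ two pieces of information about the $M$-component of $X$ — a pointwise bound and an integral identity — which together force $\dim M\le 1$. Write $g=g_M+g_N$, decompose $X=X^M+X^N$ along $T(M\times N)=TM\oplus TN$, let $\ric_M=\rho_M g_M$, and set $\lambda:=A-\rho_M$. Passing to a finite cover if necessary, we may assume the deck transformations of $M\times N\to\bar M$ preserve the splitting $TM\oplus TN$, so that $X^M$, $|X^M|^2$, and $\divergence(X^M)$ descend to $\bar M$; this harmless reduction changes neither the hypotheses nor the conclusion, and we lift to the complete manifold $M\times N$ whenever arguing along geodesics. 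The basic observation is that a geodesic of $M$, placed in a slice $M\times\{q\}$, is a complete geodesic $\gamma$ of $M\times N$ along which feeding $(\dot\gamma,\dot\gamma)$ into $\ric_X^m=Ag$ and using that $M$ is Einstein gives $\tfrac12\Lie_X g(\dot\gamma,\dot\gamma)-\tfrac1m g(X,\dot\gamma)^2=\lambda$ — precisely the hypothesis of Proposition \ref{prop:lambda m >0 =0 <0}.

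First I would dispose of the cases $\lambda=0$ and $\lambda m>0$. If $\lambda=0$, Proposition \ref{prop:lambda m >0 =0 <0}(1) gives $g(X,\dot\gamma)\equiv 0$ along every such slice geodesic; letting the initial direction run over the unit sphere of each $T_pM$ shows $X^M\equiv 0$, i.e. $X|_M=0$. If $\lambda m>0$, then $\varphi(t)=g(X,\dot\gamma)$ along a (complete) slice geodesic would have to solve $\varphi'=\tfrac1m\varphi^2+\lambda$ on all of $\mathbb R$, which is impossible by Lemma \ref{lemma:function}(2); hence this case cannot occur in the presence of a solution.

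The remaining case is $\lambda m<0$; put $\mu:=-\lambda m>0$. Ingredient (a): $|X^M|^2\le\mu$ everywhere — indeed, by Proposition \ref{prop:lambda m >0 =0 <0}(3) the function $\varphi_\gamma=g(X,\dot\gamma)$ along each slice geodesic is either $\pm\sqrt{\mu}$ or $\sqrt{\mu}\tanh(\cdots)$, so $|\varphi_\gamma|\le\sqrt{\mu}$, and taking the supremum of $\varphi_\gamma(0)$ over unit $\dot\gamma(0)\in T_pM$ gives $|X^M_p|\le\sqrt{\mu}$. Ingredient (b): $\int_{\bar M}|X^M|^2=(\dim M)\,\mu\,\vol(\bar M)$ — to see this, restrict $\ric_X^m=Ag$ to an orthonormal frame $e_1,\dots,e_k$ of $TM$ ($k=\dim M$) and sum over $i$; using that the product Ricci curvature in $M$-directions equals $\rho_M$, that for the product connection $\nabla_{e_i}X^N$ is $N$-tangent while $\nabla_{f_a}X^M$ is $M$-tangent (so the mixed terms in the trace and in $\divergence(X^M)$ drop out), and that $g(X,e_i)=g(X^M,e_i)$, one obtains the identity $\divergence(X^M)=k\lambda+\tfrac1m|X^M|^2$ on $\bar M$; integrating over the compact manifold $\bar M$ and applying the divergence theorem yields $0=k\lambda\,\vol(\bar M)+\tfrac1m\int_{\bar M}|X^M|^2$, which is (b). Combining (a) and (b), $k\,\mu\,\vol(\bar M)=\int_{\bar M}|X^M|^2\le\mu\,\vol(\bar M)$; since $\mu>0$ this forces $k\le 1$, i.e. $M$ is one-dimensional.

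I expect the main work to lie in ingredient (b): one must track carefully how $\Lie_X g$ and $\divergence(X^M)$ interact with the product structure so that tracing over the $M$-directions really produces the clean identity above with all cross terms vanishing, and one must make sure $X^M$ genuinely descends to the compact quotient — which is the purpose of the preliminary reduction to a finite cover on which the deck group respects the splitting. Once (a) and (b) are in place the comparison is immediate, and the trichotomy on the sign of $\lambda m$ (with the two degenerate cases handled directly via Proposition \ref{prop:lambda m >0 =0 <0}) organizes the whole argument.
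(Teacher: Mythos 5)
Your argument agrees with the paper's in the two easy cases: for $\lambda:=A-\rho_M=0$ you invoke Proposition~\ref{prop:lambda m >0 =0 <0}(1) to get $X|_M=0$, and for $\lambda m>0$ you get a contradiction from Proposition~\ref{prop:lambda m >0 =0 <0}(2); that is exactly what the paper does. In the decisive case $\lambda m<0$, however, you take a genuinely different route. The paper fixes a slice geodesic perpendicular to $X$ (possible only when $\dim M\ge 2$), so that $\varphi$ must be the $\tanh$ solution, and then rules that out by a dynamical argument on the compact quotient: the closure of the projected geodesic is compact, the $\tanh$ solution never attains its supremum $\sqrt{-\lambda m}$, and restarting the flow at a maximizing point of the orbit closure forces $\varphi$ to be constant, contradicting $\varphi(0)=0$. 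You instead keep all three ODE solutions, extract the uniform pointwise bound $|X^M|^2\le -\lambda m$, and pair it with the integral identity obtained by tracing $\ric_X^m=Ag$ over $TM$ and applying the divergence theorem, namely $\int|X^M|^2=(\dim M)(-\lambda m)\vol$, which forces $\dim M\le 1$. Your trace computation is correct for the product connection (the mixed terms do vanish because $TM$ and $TN$ are parallel), and your argument is arguably cleaner and more quantitative: it avoids the orbit-closure step entirely and, as a bonus, shows that when $\dim M=1$ one must have $|X^M|\equiv\sqrt{-\lambda m}$, consistent with the solutions found in Theorem~\ref{theorem:compact einstein cross einstein}.

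The one step you should not dismiss as ``harmless'' is the reduction to a finite cover on which the deck group preserves the splitting $TM\oplus TN$. Your integral identity is proved upstairs on $M\times N$, and to integrate it you need $|X^M|^2$ and $\divergence(X^M)$ to be $\Gamma$-invariant, i.e.\ you need the orthogonal projection onto $TM$ to commute with the deck transformations. Isometries of a Riemannian product need not preserve the factors: they can permute isometric irreducible de Rham factors, and --- more seriously --- if both $M$ and $N$ contribute to a common Euclidean de Rham factor of $M\times N$ (which can happen here, since an Einstein factor with a flat de Rham piece is Ricci-flat and so is allowed by your hypotheses), the subgroup of $\mathrm{Isom}(M\times N)$ preserving the splitting has infinite index, and finiteness of the index of $\Gamma\cap\Gamma_0$ must be extracted from the discreteness and cocompactness of $\Gamma$ via the uniqueness of the de Rham decomposition together with a Bieberbach-type finiteness of the rotational part. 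This is doable (and is automatic in the paper's actual applications, where the non-flat factor is irreducible), but it is a real argument, not a formality. Note that the paper's proof sidesteps the issue entirely by working only with the scalar $\varphi_\gamma(t)=g(X,\dot\gamma)$ along individual geodesics, which descends to the quotient with no assumption on the deck group; if you want your version to stand on its own at the stated level of generality, you should either supply the finite-index argument or reformulate ingredient (b) so that only manifestly $\Gamma$-invariant quantities are integrated.
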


\begin{proof}
 Without loss of generality, assume that $M$ and $N$ are simply connected because if either space is not simply connected, we can lift them to the universal cover. Let $\pi: M\times N\rightarrow \bigslant{(M\times N)}{\Gamma}$ be the universal covering map and let $\ric_M=\rho_M g_M$. Let $\gamma_M(t)$ be a unit speed geodesic in $M$. Then we have $$\frac{1}{2}\mathcal{L}_Xg(\dot\gamma_M,\dot\gamma_M)-\frac{1}{m}X^*\otimes X^*(\dot\gamma_M,\dot\gamma_M)=A-\rho_M.$$

We aim to show that either $A-\rho_M=0$ or $M=\mathbb{R}$. If $M$ is not $\mathbb{R}$ then $M$ is not one-dimensional, so we can choose $\gamma_M$ to be perpendicular to $X$ at $0$. In this case, $\varphi_{\gamma_M}(0)$ is zero, so $\varphi_{\gamma_M}(t)$ cannot be constantly nonzero. If $(A-\rho_M)m>0$, then by Proposition \ref{prop:lambda m >0 =0 <0}(2), there are no complete solutions. If $(A-\rho_M)m<0$, then by Proposition \ref{prop:lambda m >0 =0 <0}(3), and since $\varphi_{\gamma_M}(t)$ $\varphi_{\gamma_M}(t)$ is $$\sqrt{-(A-\rho_M) m}\tanh\bigg(\frac{\sqrt{(A-\rho_M) m}}{m}(t+C)\bigg).$$ 
To show that $\varphi_{\gamma_M}(t)$ cannot be $\displaystyle\sqrt{-(A-\rho_M) m}\tanh\bigg(\frac{\sqrt{(A-\rho_M) m}}{m}(t+C)\bigg)$, we will use an argument similar to the proof of Lemma \ref{lemma:chen_without_ricci}.
\vspace{1em}

Consider the set $\overline{\{\pi\circ \gamma_M(t):t\in\mathbb{R}\}}$. Since this set is closed, $\varphi_{\gamma_M}(t)$ has a maximal point, $t_{max}$ on this set. Because the supremum of the $\tanh$ function is $1$, we know that the maximum of $\varphi_{\gamma_M}(t)$ on $\overline{\{\pi\circ \gamma_M(t):t\in\mathbb{R}\}}$ is $\sqrt{-(A-\rho_M) m}$.
\vspace{1em}

Let $\beta(t)$ be a geodesic of $X$ such that $\beta(0)=\gamma_M(t_{max})=\sqrt{-(A-\rho_M) m}$. Now consider the set $\{\pi\circ \beta(t):t\in\mathbb{R}\}$. Along $\beta(t)$, $\varphi_{\beta}(t)$ is either $\sqrt{-(A-\rho_M) m}$ or $-\sqrt{-(A-\rho_M) m}\tanh(\frac{\sqrt{-(A-\rho_M) m}}{m}(t+C))$. Since the supremum of $\varphi_{\beta}(t)$ on $\{\beta(t):t\in\mathbb{R}\}$ is $\sqrt{-(A-\rho_M) m}$ and the $\tanh$ function never achieves its maximum on its domain, $\varphi_{\beta}(t)$ must be constantly $\sqrt{-(A-\rho_M) m}$ on the set $\{\pi\circ \beta(t):t\in \mathbb{R}\}$.
\vspace{1em}

Finally, since $\overline{\{\pi\circ \beta(t):t\in\mathbb{R}\}}=\overline{\{\pi\circ \gamma_M(t):t\in\mathbb{R}\}}$, $\varphi_{\gamma_M}(t)$ is constant on $\overline{\{\pi\circ \gamma_M(t):t\in\mathbb{R}\}}$. Thus, $\varphi_{\gamma_M}(t)$ is constant.
\vspace{1em}

Since $\varphi_{\gamma_M}(0)=0$, $\varphi_{\gamma_M}(t)$ cannot be $\pm\sqrt{-(A-\rho_M)m}$, and so we have arrived at a contradiction.
\vspace{1em}

Thus, either $M=\mathbb{R}$ or $A-\rho_M=0$. If $A-\rho_M=0$, then by Proposition \ref{prop:lambda m >0 =0 <0}(1), $\varphi_{\gamma_M}=0$, which implies $X|_M=0$.
\end{proof}

Now we can prove Theorem \ref{theorem:compact einstein cross einstein}
\begin{proof}[Proof of Theorem \ref{theorem:compact einstein cross einstein}]

Let $\pi: M\times N\rightarrow \bigslant{(M\times N)}{\Gamma}$ be the universal covering map and let $\ric_M=\rho_M g_M$ and $\ric_N=\rho_N g_N$. Let $\gamma_M(t)$ be a unit speed geodesic in $M$ and let $\gamma_N(t)$ be a unit speed geodesic in $N$. By Lemma \ref{lemma:either positive or one-dimensional}, $M$ is either one-dimensional or $X|_M=0$ and $A-\rho_M=0$. By symmetry, either $A-\rho_N=0$ and $X|_N$ is zero, or $N=\mathbb{R}$. 
\vspace{1em}

Suppose without loss of generality that $N=\mathbb{R}$. Then $$\frac{1}{2}\mathcal{L}_Xg(\dot\gamma_N,\dot\gamma_N)-\frac{1}{m}X^*(\dot\gamma_N)X^*(\dot\gamma_N)=Ag.$$ 

By Proposition \ref{prop:lambda m >0 =0 <0}, $A=0$, then $$X=0,$$ If $Am>0$, then there are no solutions, and if $Am<0$, then  $$X=\sqrt{-\lambda m}\tanh\bigg(\frac{\sqrt{-\lambda m}}{m}(t+C)\bigg)\frac{\partial}{\partial r}\text{ or }X=\pm\sqrt{-\lambda m}\frac{\partial}{\partial r}.$$ 

If we consider the set $\overline{\{\pi\circ\gamma_N(t):t\in\mathbb{R}\}}$ and use the same argument as above, we see that $X=\sqrt{-\lambda m}\tanh\bigg(\frac{\sqrt{-\lambda m}}{m}(t+C)\bigg)\frac{\partial}{\partial r}$ is not a solution. 
\vspace{1em}

Thus, the only solutions are $X=0$ when $A=\rho_M=\rho_N\neq 0$, and $X=\pm\sqrt{-Am}\frac{\partial}{\partial r}$ when either $N=\mathbb{R}$ or $M=\mathbb{R}$.

\end{proof}

\section{Summary}\label{section:table}
In the following table, we summarize the solutions of locally homogeneous compact three-manifolds, $M^3$ which have quasi-Einstein metrics. In the first column, which we've named ``Manifold", we have the manifolds which act cocompactly on $M^3$. The second through seventh columns are the different signs of $m$ and $A$ in our $m$-quasi Einstein equation, $\ric_X^m=Ag$. If there are no solutions to the compact quotient of ``Manifold", we write \textit{None}. If the only solutions are when $X=0$, then we say \textit{Trivial solution}, and if there are nontrivial solutions, then we say \textit{Exists}.
\vspace{1em}

\bgroup
\def\arraystretch{2.5}
\begin{center}
\begin{tabular}{ c | c | c | c | c | c | c}
Manifold & \parbox{1.5cm}{$m>0$\\ $A>0$} &
\parbox{1.5cm}{$m>0$\\ $A=0$} &
\parbox{1.5cm}{$m>0$\\ $A<0$} &  \parbox{1.5cm}{$m<0$\\ $A>0$} &
\parbox{1.5cm}{$m<0$\\ $A=0$} & 
\parbox{1.5cm}{$m<0$\\ $A<0$}\\
\hline
$\mathbb{R}^3$ & None & \parbox{1.5cm}{Trivial Solution} & None & None & \parbox{1.5cm}{Trivial Solution} & None\\

$SU(2)$ & Exists & Exists & Exists & Exists & None & None\\

$\widetilde{SL_2(\mathbb{R})}$ & None & None & None & None & Exists & None \\

$Nil$ & None & None & Exists  & None & None & None\\

$E(1,1)$ & None & None & None & None & None & None\\

$E(2)$ & None & None & None & None & None & None\\

$H^2\times \mathbb{R}$ & None & None & Exists & None & None & None\\

$S^2\times\mathbb{R}$ & None & None & None &  Exists & None & None\\

$H^3$ & None & None & \parbox{1.5cm}{Trivial Solution} & None & None & \parbox{1.5cm}{Trivial Solution}\\

\end{tabular}
\end{center}

\section*{Acknowledgements}
The author would like to thank her thesis advisor, Professor William Wylie, for all of his help and support in writing this paper.\\

This work was partially supported by NSF grant DMS-1654034. 
\bibliographystyle{plain}
\bibliography{su(2)}

\end{document}